\newcommand{\R}{\mathbb{R}}
\newcommand{\s}{\mathbb{S}}
\newcommand{\supp}{\text{supp}}
\newcommand{\conbod}{\mathcal{K}^n}
\newcommand{\vol}{\text{\rm Vol}}
\newtheorem{theorem}{Theorem}[section]
\newtheorem{definition}[theorem]{Definition}
\newtheorem{lemma}[theorem]{Lemma}
\newtheorem{corollary}[theorem]{Corollary}
\newtheorem{remark}[theorem]{Remark}
\newtheorem{proposition}[theorem]{Proposition}
\newtheorem{example}[theorem]{Example}
\begin{document}

	\title{General measure extensions of projection bodies %and Zhang's Inequality
	}
	\author{Dylan Langharst, Michael Roysdon, and Artem Zvavitch \footnote{The first and third named authors were supported in part by the U.S. National Science Foundation Grant DMS-2000304 and the United States - Israel Binational Science Foundation (BSF) Grant 2018115. The second named author was supported in part by Zuckerman STEM Leadership Program and in part by the European Research Council (ERC) under the European Union’s Horizon 2020 research and innovation programme (grant agreement No 770127). MSC 2020 Classification: 52A39, 52A40;  Secondary: 28A75
Keywords: Projection Bodies, Zonoids, Zhang's Inequality, Petty Projection Inequality, Ehrhard's Inequality}}

\maketitle

\begin{abstract}
The inequalities of Petty and Zhang are affine isoperimetric-type inequalities providing sharp bounds for $\vol^{n-1}_{n}(K)\vol_n(\Pi^\circ K),$ where  $\Pi K$ is a projection body of a convex body $K$. In this paper, we present a number of generalizations of Zhang's inequality to the setting of arbitrary measures.	In addition, we introduce extensions of the projection body operator $\Pi$ to the setting of arbitrary measures and functions, while providing associated inequalities for this operator; in particular, Zhang-type inequalities. Throughout, we apply shown results to the standard Gaussian measure. Under a certain restriction on $K,$ we use our results to obtain a reverse isoperimetric inequality for the measure of $\partial K.$
\end{abstract}

\section{Introduction}\label{Introduction}
 We consider the $n$-dimensional Euclidean space $\R^n$ equipped with its usual structure induced by $\langle x,y\rangle$, the inner product of two vectors $x, y \in \mathbb R^n$, and denote by $|x|$ the length of a vector $x \in \mathbb{R}^n$. The closed unit ball in $\R^n$ is denoted by $B_2^n$, and its boundary by $\s^{n-1}$. Moreover, for any set in $A \subset \R^n$, we denote its boundary by $\partial A$.  We write $\vol_m(A)$ for the $m$-dimensional Lebesgue measure (volume) of a measurable set $A \subset \R^n$, where $m = 1, . . . , n$ is the dimension of the minimal affine space containing $A$. A convex body is a convex, compact set with nonempty interior. We denote by $\conbod$ the set of convex bodies in $\R^n$. For  $K \in \conbod $ the support function of $K$, $h_K:\R^n \to \R$, is defined as  $h_K(x)=\sup \{\langle x,y\rangle \colon y \in K \}$. For $K,L \in \conbod$, their Minkowski sum is defined as the set $K+L=\{ x + y: x\in K, y\in L\}.$ For the purposes of Minkowski addition, a singleton set $\{\xi\}$ is simply written as $\xi$.  From the definition,
  $$h_{K+L}(\theta) = h_{K}(\theta)+h_{L}(\theta)$$ and, in particular,  we have for a vector $\xi\in\R^n$, that $h_{K+\xi}(\theta)=h_K(\theta)+\langle\xi,\theta\rangle.$ An important subset of this space is the set of convex bodies containing the origin in their interior, $\conbod_0$. For $K\in\conbod_0$, the dual body of $K$ is given by $$K^\circ=\left\{x\in \R^n: h_K(x) \leq 1\right\}.$$A convex body $K$ is centrally symmetric, or just symmetric, if $K=-K$. 
Furthermore, for $K \in \conbod$, we denote the orthogonal projection of $K$ onto a linear subspace $H$ as $P_H K$;  the \textbf{projection body} of $K$, denoted $\Pi K$, is defined via its support function
\begin{align*}
    h_{\Pi K}(\theta)=\vol_{n-1}(P_{\theta^{\perp}}K),
\end{align*}
where  $\theta^{\perp}=\{x\in\R^n:\langle \theta,x\rangle =0\}$ is the subspace orthogonal to $\theta \in \s^{n-1}$.  

We refer the reader to \cite{gardner_book,Gr,Ko,KoY,Sh1},  as well as Section~\ref{preliminaries} below, for more definitions and properties of convex bodies and corresponding functionals. In particular, Cauchy's projection formula shows that $\Pi K$ is a symmetric convex body. Relations between a convex body $K$ and its polar projection body $(\Pi K)^\circ=\Pi^\circ K$ have been studied extensively; in particular, the following bounds have been established: for any $K\in\conbod$, one has, with $\kappa_m=\vol_m(B_2^m),$
\begin{equation}\label{e:Zhang_ineq}
\frac{1}{n^n} {2n \choose n}\leq\vol_n(K)^{n-1}\vol_n(\Pi^\circ K)\leq \left(\frac{\kappa_n}{\kappa_{n-1}}\right)^n.
\end{equation}
The right-hand side of \eqref{e:Zhang_ineq} is \textbf{Petty's inequality} which was proven by Petty in 1971 \cite{petty}; equality occurs in Petty's inequality if, and only if, $K$ is an ellipsoid. The left-hand side of \eqref{e:Zhang_ineq} is known as \textbf{Zhang's inequality}. It was proven by Zhang in 1991 \cite{Zhang}. Equality holds in Zhang's inequality if, and only if, $K$ is a simplex (convex hull of $n+1$ affinely independent points).

Over the last two decades, a number of classical results in convex geometry have been extended to the setting of arbitrary measures. This includes extensions of the Busemann-Petty and slicing problems \cite{BP,K2,K3,K4,KK,KLi,KZ,KPZv,Zv1,Zv2}, works on the surface area measure \cite{Ball3,Naz,GAL2,GAL3,GAL4,GAL5}, and general measure extensions of the projection body of a convex body \cite{GAL}.

One of the goals of this paper is to consider a variety of generalizations of Zhang's inequality to the setting of different measures, akin to the undertaking by Livshyts in \cite{GAL} for surface area. We denote by $\Lambda$ the set of all locally finite, regular Borel measures $\mu$ whose Radon-Nikodym derivative, or density, is from $\R^n$ to $\R^{+}:=[0,\infty)$, i.e, 
\[
\mu \in \Lambda \iff \frac{d\mu(x)}{dx} = \phi(x), \text{ with } \phi \colon \R^n \to \R^+, \phi\in L^1_{\text{loc}}(\R^n). 
\]
We denote the Lebesgue measure itself as $\lambda.$ In addition, if $\nu\in\Lambda$ with density $\varphi$, then, $\nu$ is said to be radially non-decreasing measure if, for every $x\in\R^n$ and $t\in[0,1]$, we have $\varphi(tx)\leq\varphi(x)$. We denote $\Lambda_{\text{rad}}$ as the subset of $\Lambda$ consisting of radially non-decreasing measures which have densities continuous on $\R^n\setminus\{0\}$.

The proof of Zhang's inequality, as presented in \cite{GZA} made critical use of the covariogram function, which appears often in the literature. We recall its definition (see also \cite{Sh1}).  
\begin{definition}
	For $K\in\conbod$ the \textbf{covariogram} of $K$ is given by
	\begin{equation}\label{e:covario}
	    g_K(x)=\vol_n\left(K\cap(K+x)\right)=(\chi_K\star\chi_{-K})(x),
	\end{equation}
where $(f\star g)(x)= \int_{\R^n} f(y)g(x-y)dy$ is a convolution of functions $f,g:\R^n \to \R$ and
	$\chi_K(x)$ is the characteristic function of $K$. 
	%given by
	%\begin{align*}\chi_K(x)=
	%\begin{cases}
%	1 \; \text{if } x\in K
%	\\
%	0 \; \text{if } x\notin K
%	\end{cases}
%	\end{align*}
	\end{definition}The support of $g_K(x)$ is the difference body of $K$, given by 
	\begin{equation}\label{e:differnce}
	    DK=\{x:K\cap(K+x)\neq \emptyset\}=K+(-K).
	\end{equation}
One of the crucial steps in the proof of Zhang's inequality in \cite{GZA} was to calculate the brightness of a convex body $K$, that is the derivative of the covariogram of $K$ in the radial direction, evaluated at $r=0$. Being defined as a convolution of characteristic functions, the covariogram inherits the $1/n$ concavity property of the Lebesgue measure. The proofs of these facts can be found in \cite{Sh1}. 

We say a set $L$ with $0 \in \text{int}(L)$ is star-shaped if every line passing through the origin crosses the boundary of $L$ exactly twice. We say $L$ is a star body %with respect to $x\in\R^n$ 
if it is a compact, star-shaped set whose radial function $\rho_L:\R^n\setminus \{0\} \to \R,$ given by $\rho_L(y)=\sup\{\lambda:\lambda y\in L\},$ is continuous. Furthermore, for $K\in\conbod_0,$ the \textit{Minkowski functional} of $K$ is defined to be $\|y\|_K=\rho^{-1}_K(y)=\inf\{r>0:y\in rK\}.$ The Minkowski functional $\|\cdot\|_K$ of $K\in\conbod_0$ is a norm on $\R^n$ if $K$ is symmetric. If $x\in\R^n$ is so that $L-x$ is a star body, then the generalized radial function of $L$ at $x$ is defined by $\rho_L(x,y)=\rho_{L-x}(y)$. Note that for every $K\in\conbod,$ $K-x$ is a star body for every $x\in\text{int}(K)$.  Gardner and Zhang \cite{GZA} defined the \textit{radial pth mean bodies} of a convex body $K$ as the star body whose radial function is given by, for $\theta\in\s^{n-1},$ 
\begin{equation}
    \rho_{R_p K}(\theta)=\left(\frac{1}{\vol_n(K)}\int_K \rho_K(x,\theta)^p dx\right)^\frac{1}{p}.
    \label{pth}
\end{equation}
A priori, the above is valid for $p>0$.
But also, by appealing to continuity, Gardner and Zhang were able to define
$\rho_{R_\infty K}(\theta)=\max_{x\in K}\rho_K(x,\theta)=\rho_{DK}(\theta)$ and
$\rho_{R_{0} K}(\theta)=\exp \left(\frac{1}{\vol_n(K)} \int_{K} \log \rho_{K}(x, \theta) d x\right).$ The fact that
$$\int_K \rho_K(x,\theta)^pdx=p\int_K\int_0^{\rho_K(x,\theta)}r^{p-1}drdx=p\int_0^{\rho_{DK}(\theta)}\left(\int_{K\cap (K+r\theta)}dx\right)r^{p-1}dr=p\int_0^{\rho_{DK}(\theta)}g_K(r\theta)r^{p-1}dr$$
for $p>0$ shows that each $R_p K$ is a symmetric convex body ($p=0$ follows by continuity), as integrals of the above form are radial functions of certain symmetric convex bodies (see \cite[Theorem 5]{LBall} for $p\geq 1$ and \cite[Corollary 4.2]{GZA}). By using Jensen's inequality, one has for $0\leq p\leq q\leq \infty$
\begin{equation}
R_{0} K \subseteq R_{p} K \subseteq R_{q} K \subseteq R_{\infty} K=D K.
\label{eq:radial_set_inc}
\end{equation}
To extend to the range to $p\in (-1,0),$ Gardner and Zhang defined another family of star bodies depending on $K\in\conbod$, the \textit{spectral pth mean bodies} of $K,$ denoted $S_p K.$ However, to apply Jensen's inequality, they had to assume additionally that $\vol_n(K)=1.$ To avoid this assumption, we change the normalization and define $S_p K$ as the star body whose radial function is given by, for $p\in [-1,\infty),$
$$\rho_{S_p K}(\theta)=\left(\int_{P_{\theta^\perp}K} X_\theta K(y)^{p}\left(\frac{X_\theta K(y)dy}{\vol_n(K)}\right)\right)^{1/p},$$
where $X_\theta K$ is the \textit{X-ray} of $K$ in the direction $\theta\in\s^{n-1}$ (see \cite[Chapter 1]{gardner_book} for a precise definition of $X_\theta K$, but note that $\int_{P_{\theta^\perp}K}\frac{X_\theta K(y)dy}{\vol_n(K)}=1$), $\rho_{S_{\infty} K}(\theta) = \max_{y\in \theta^\perp}X_\theta K(y)=\rho_{DK}(\theta),$
$\rho_{S_0 K}(\theta)=\exp\left(\int_{P_{\theta^\perp}K}\log(X_\theta K(y))\frac{X_\theta K(y)dy}{\vol_n(K)}\right),$
and $$\rho_{S_{-1}K}(\theta)=\vol_n(K)\vol_{n-1}(P_{\theta^\perp}K)^{-1}=\vol_n(K)\rho_{\Pi^\circ K}(\theta).$$

Therefore, from Jensen's inequality, we obtain, for $-1\leq p \leq q \leq \infty,$
\begin{equation}\vol_n(K)\Pi^\circ K=S_{-1}K\subseteq S_p K \subseteq S_q K \subseteq S_{\infty}K=DK.\label{eq:spectral}\end{equation}
% The equality conditions of Jensen's inequality reveal that there is equality in each set inclusion throughout \eqref{eq:spectral} if, and only if, there exists some non-negative, measurable function $w(\theta)$ on $\s^{n-1}$ such that $X_\theta K(y)=w(\theta)$ for almost every $y\in\theta^\perp.$ 
The fact that, for $p>-1,$
$$\frac{1}{p+1}\int_{P_{\theta^\perp} K}X_\theta K(y)^{p+1}dy=\int_{P_{\theta^\perp} K}\int_0^{X_\theta K(y)}r^pdrdy=\int_K \rho_K(x,\theta)^p dx$$
shows $S_0 K = e R_0 K$, $S_p K = (p+1)^{1/p}R_p K, \, p>0, $ and that we can analytically continue $R_p K$ to $p\in (-1,0)$ by $R_p K:=(p+1)^{-1/p}S_p K$ (note that due to the alternate normalization of $S_p K,$ this is different than in \cite[Theorem 2.2]{GZA}; in both instances,  it is unknown if $R_p K$ and $S_p K$ are convex for $p\in (-1,0)$). Gardner and Zhang then obtained a reverse of \eqref{eq:radial_set_inc}. They accomplished this by showing \cite[Theorem 5.5]{GZA}, for $0\leq p \leq q < \infty,$ that
\begin{equation}
    \label{e:set_inclusion}
    D K \subseteq c_{n, q} R_{q} K \subseteq c_{n, p} R_{p} K \subseteq n \vol_n(K) \Pi^{\circ} K,
\end{equation}
where $c_{n, p}$ are constants defined as $$
c_{n, p}=(n B(p+1, n))^{-1 / p} \text{ for $p>0$ and} \; c_{n, 0}=\lim _{p \rightarrow 0}(n B(p+1, n))^{-1 / p}=\exp \left(\sum_{k=1}^{n} 1 / k\right),
$$ $B(x,y)=\frac{\Gamma(x)\Gamma(y)}{\Gamma(x+y)}$ is the standard Beta function, and  $\Gamma(x)$ is the standard Gamma function. There is equality in each inclusion in \eqref{e:set_inclusion} if, and only if, $K$ is a simplex. Notice that the far-left and far-right of \eqref{eq:spectral} and \eqref{e:set_inclusion} yields $\vol_n(K)\Pi^\circ K \subseteq DK \subseteq n\vol_n(K)\Pi^\circ K$. This shows the geometric distance between $\vol_n(K)\Pi^\circ K$ and $DK$ is at most $n$. Note that $DK$ and $K$ are also related to an affine invariant quantity. For $K\in\conbod,$
	\begin{equation}\label{e:roger_shep}
	2^n\leq \frac{\vol_n(DK)}{\vol_n(K)}\leq{2n \choose n},
	\end{equation}
	where the left-hand side follows from the Brunn-Minkowski inequality (see the discussion on concavity of measures below) and has equality if, and only if, $K$ is symmetric. The right-hand side was proven by Rogers and Shephard  \cite{RS1}, with equality if, and only if, $K$ is a simplex. In \cite{AHNRZ} the existence of various Rogers-Shephard type inequalities for general measures $\mu\in\Lambda$ was established. Their results were in terms of the \textbf{translated-average} of $K$ with respect to $\mu$, which we recall is, for a Borel measure $\mu$:
\begin{equation}\label{e:mu_average}
	    \mu_\lambda(K):=\frac{1}{\vol_n(K)}\int_K \mu(y-K)dy=\frac{1}{\vol_n(K)}\int_{K} \int_{\mathbb{R}^{n}} \chi_{K}(y-x) d \mu(x) dy =\frac{1}{\vol_n(K)}\int_{\mathbb{R}^{n}} g_{K} (x) d \mu(x),
	\end{equation}
	where the second equality follows from the fact that $x\in y-K$ if, and only if, $y-x\in K,$ and hence $\chi_{y-K}(x)=\chi_{K}(y-x);$ the last equality follows from Fubini's theorem and \eqref{e:covario}. The quantity $\mu_\lambda(K)$, obeys the following reverse Rogers-Shephard type inequality:
	\begin{equation}
	\label{eq:mu_rever_rog}\mu_\lambda(K)=\frac{1}{\vol_n(K)}\int_K \mu(y-K)dy=\frac{1}{\vol_n(K)}\int_{DK} \vol_n(K\cap (K+x))d\mu(x)\leq \mu(DK).
	\end{equation}
	The following Rogers-Shephard inequality for radially decreasing measures was proved in \cite{AHNRZ}:
\begin{proposition}
\label{prop:RST}
Let $K \in \mathcal{K}^{n}$. Let $\nu\in\Lambda$ have radially decreasing density. Then
$$
\nu(K-K) \leq {{2n} \choose n}\nu_{\lambda}(K).
$$
Moreover, if $\phi$ is continuous at the origin, then equality holds if, and only if, $\nu$ is a constant multiple of the Lebesgue measure on $K-K$ and $K$ is a simplex.
\end{proposition}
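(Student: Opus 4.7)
The plan is to reduce the desired inequality to a one-variable comparison along each ray from the origin, combining the $1/n$-concavity of the covariogram with the radial monotonicity of $\phi$.

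First, using \eqref{e:mu_average} and the fact that $g_K$ is supported on $DK$ by \eqref{e:differnce}, I would pass to polar coordinates and apply the substitution $s=t\rho_{DK}(\theta)$ to obtain
\begin{align*}
\nu(DK) &= \int_{\s^{n-1}}\int_0^1 \phi(t\rho_{DK}(\theta)\theta)\, t^{n-1} \rho_{DK}(\theta)^n\, dt\, d\theta,\\
\nu_\lambda(K) &= \int_{\s^{n-1}}\int_0^1 \frac{g_K(t\rho_{DK}(\theta)\theta)}{\vol_n(K)}\, \phi(t\rho_{DK}(\theta)\theta)\, t^{n-1} \rho_{DK}(\theta)^n\, dt\, d\theta.
\end{align*}
Brunn-Minkowski applied to the inclusion $(1-t)\bigl(K\cap K\bigr)+t\bigl(K\cap(K+\rho_{DK}(\theta)\theta)\bigr)\subseteq K\cap(K+t\rho_{DK}(\theta)\theta)$ yields the $1/n$-concavity of $g_K$ on $DK$; since $g_K(0)=\vol_n(K)$ and $g_K$ vanishes on $\partial(DK)$, this gives the pointwise bound $g_K(t\rho_{DK}(\theta)\theta)\geq (1-t)^n\vol_n(K)$ for $t\in[0,1]$. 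Substituting this lower bound into the formula for $\nu_\lambda(K)$ reduces the proposition to establishing, for each $\theta\in\s^{n-1}$, the one-variable inequality
\[
\int_0^1 \psi(t)\, t^{n-1}\, dt \;\leq\; \binom{2n}{n}\int_0^1 (1-t)^n \psi(t)\, t^{n-1}\, dt,
\]
where $\psi(t):=\phi(t\rho_{DK}(\theta)\theta)$ is non-increasing on $[0,1]$ by hypothesis on $\phi$.

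For this one-variable inequality I would use a sign-change argument. The function $f(t):=\bigl[1-\binom{2n}{n}(1-t)^n\bigr]t^{n-1}$ changes sign exactly once on $[0,1]$, going from negative to positive at $t^\ast:=1-\binom{2n}{n}^{-1/n}$; moreover, a direct Beta-function computation gives $\int_0^1 f(t)\, dt = 0$, which is precisely the Lebesgue case corresponding to the classical Rogers-Shephard bound in \eqref{e:roger_shep}. Since $\psi$ is non-increasing,
\[
\int_0^1 f(t)\psi(t)\, dt \;\leq\; \psi(t^\ast)\int_0^1 f(t)\, dt \;=\; 0,
\]
which is the needed pointwise-in-$\theta$ estimate. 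Integrating over $\s^{n-1}$ against $\rho_{DK}(\theta)^n$ completes the argument.

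The main obstacle I anticipate is tracking the equality cases carefully rather than any analytic difficulty. Equality in the Brunn-Minkowski step forces $K$ to be a simplex, while equality in the rearrangement step (combined with continuity of $\phi$ at the origin) forces $\psi$ to be essentially constant on $[0,1]$ in every direction, i.e.\ $\phi$ constant on $DK=K-K$; together these yield the stated equality characterization. Everything else is polar-coordinate bookkeeping and direct invocation of identities already recorded in the excerpt.
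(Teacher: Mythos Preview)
Your argument is correct. The paper does not actually prove Proposition~\ref{prop:RST}; it is quoted verbatim from \cite{AHNRZ} and used as background for the measure-theoretic Rogers--Shephard machinery.

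That said, it is worth contrasting your technique with the method the paper develops in Lemma~\ref{t:chak} for the companion inequalities (radially \emph{non-decreasing} densities, Theorem~\ref{th:zhnd}). There the authors fix a direction, introduce an auxiliary function $\psi_\theta(y)$ depending on a free constant $\beta$, and determine $\beta$ by requiring $\psi_\theta'\geq 0$; the radial monotonicity of the density enters when bounding $\varphi(uy)/\varphi(y)$. Your route is more elementary: after the same polar decomposition and the same $1/n$-concavity lower bound $g_K(t\rho_{DK}(\theta)\theta)\geq (1-t)^n\vol_n(K)$, you reduce to a single-sign-change (Chebyshev-type) inequality against the kernel $f(t)=\bigl[1-\binom{2n}{n}(1-t)^n\bigr]t^{n-1}$, whose mean vanishes by the Beta identity. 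This avoids the differentiation step entirely and makes the role of the constant $\binom{2n}{n}$ transparent. The paper's variational approach, on the other hand, is set up to handle a general increasing $q$ rather than just $q(x)=x^n$, which is why they pay the extra cost. For the equality analysis your outline is correct but slightly compressed: you need $\phi(0)>0$ (which continuity at the origin plus radial monotonicity supplies, else the statement is vacuous) to force the Brunn--Minkowski step to be tight on a set of positive measure and hence invoke Proposition~\ref{p:simp}.
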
 
	This result is part of a collection of Rogers-Shephard and reverse Rogers-Shephard type inequalities, examined further in \cite{A,AAGJV,AlGMJV,Co,Ro}. In Section~\ref{generalizations}, we discuss analogues of Zhang's inequality for measures with positive density.  These results are in terms of the quantity $\mu_\lambda.$ From the definition, one can already obtain a result: \eqref{eq:mu_rever_rog} and \eqref{e:set_inclusion} yields 
	\begin{equation}\label{eq:av}
	    \mu_\lambda(K)\leq \mu(DK) \leq \mu\left(n\vol_n(K)\Pi^{\circ} K\right).
	\end{equation}
Comparing  this to the left-hand side of \eqref{e:Zhang_ineq}, we see that we would be missing the constant ${{2n}\choose n}$ in the case of $\mu=\lambda$. We show, however, in Theorem~\ref{t:weak_zhang} that \eqref{eq:av} is sharp over $\Lambda.$ Thus, our first main result is Theorem~\ref{th:zhnd}, where we  prove the following analogue of Zhang's inequality for those measures in $\Lambda_\text{rad},$ that is those radially non-decreasing measures with continuous densities on $\R^n\setminus\{0\}$:
for each  $\nu\in\Lambda_{\text{rad}}$  and  all $K\in\conbod,$ we have
\begin{align*}
    {2n \choose n}\nu_\lambda(K)\leq\nu(n\vol_n(K)\Pi^\circ K).
\end{align*}
	
Projection bodies have been extended to the larger setting of functions. Let $f$ be a function in the Sobolev space $W^{1,1}(\R^n)$ (see \cite{Evans}). In \cite{LYZSob}, Lutwak, Yang and Zhang proved the existence of a unique, centrally symmetric convex body generated by $f$, denoted $\langle f \rangle$ and called, in \cite{LUD}, the {\it LYZ body}. From here, the projection body $\Pi \langle f \rangle$ is the convex body whose support function is given by
\begin{equation}\label{e:LYZ_body}
h_{\Pi \langle f \rangle}(\theta)=\frac{1}{2}\int_{\R^n}|\langle \nabla f(x),\theta\rangle|dx.
\end{equation}
An upper bound for the volume of the polar projection LYZ body of a $W^{1,1}(\R^n)$ function $f$ was established by Zhang \cite{zsob} in what is now known as the Sobolev-Zhang inequality. Specifically, if $\|\cdot\|_p$ denotes the $L^p$ norm of a function with respect to $\lambda,$ then $\|f\|_{\frac{n}{n-1}}^n\vol_n(\Pi^\circ \langle f \rangle) \leq \left(\frac{\kappa_n}{\kappa_{n-1}}\right)^n,$ with equality if $f$ is a multiple of a characteristic function of an ellipsoid. This was extended to functions of bounded variation on $\R^n$ by Wang in \cite{wsob}. A lower-bound was also established \cite{ABG} for when $f$ is log-concave.

In Section~\ref{super_proj}, we consider a different generalization of projection bodies to measures in $\Lambda$ and functions. For $\mu\in\Lambda$ with density $\phi,$ we define the measure-dependent projection body of a convex body $K$ as
$$h_{\Pi_\mu K}(\theta)=\frac{1}{2}\int_{\partial K}|\langle\theta,n_K(y) \rangle|\phi(y) dy,$$
where $n_K(y)$ is a normal vector  to  $y \in \partial K$, and is well-defined almost everywhere. These were previously studied in \cite{GAL} for centrally symmetric $K\in\conbod_0$ under the notation $\frac{1}{n}p_{\mu,K}(\theta,1).$ In Section~\ref{super_proj}, we obtain the following isoperimetric inequality relating $\mu(\partial K)$ and $\vol_n(\Pi^\circ_\mu K)$ for $K\in\conbod$ and $\mu\in\Lambda$ such that $\mu(\partial K)\in (0,\infty):$
\begin{equation*}
    \left(\frac{n\kappa_n}{\kappa_{n-1}}\right)^n\kappa_n\leq \mu^n(\partial K)\vol_n(\Pi_\mu^\circ K).
\end{equation*}

In Section~\ref{sec:lem}, we state various lemmas concerning the behaviour of $K\cap (K+x)$ as $|x|$ approaches zero from a fixed direction. In Section~\ref{sec:res}, we prove results for $\Pi_\mu K$. We define the $\mu$-covariogram of $K\in\conbod$:
$$g_{\mu,K}(x)=\int_{K} \chi_K(y-x)\phi(y)dy=\mu(K\cap(K+x)).$$
This measure dependent covariogram is reminiscent, but distinct, from the recent construction of covariograms of convex bodies that depend on valuations by Averkov and Bianchi \cite{AB15}. 

One form of the main results from Section~\ref{sec:res} is showing that, for a symmetric convex body $K$, and an even measure $\mu$ with density Lipschitz on a domain containing $K$, one has for every $\theta\in\s^{n-1}$ that
$$\diff{g_{\mu,K}(r\theta)}{r}\bigg|_{r=0}=-h_{\Pi_{\mu} K}(\theta).$$
The result for non-symmetric $K,$ and $\mu$ not necessarily even, is also shown. We will review properties of Lipschitz functions in Section~\ref{preliminaries} below. To state our next result, we consider measures with different concavity conditions. A measure $\mu\in\Lambda$ is said to be $F$-concave on a class $\mathcal{C}$ of compact Borel subsets of $\R^n$ if there exists a continuous, invertible function $F:(0,\mu(\R^n))\to (-\infty,\infty)$ such that, for every pair $A,B \in \mathcal{C}$  and every $t \in [0,1]$, one has
  $$\mu(t A +(1-t)B) \geq F^{-1}\left(tF(\mu(A)) +(1-t)F(\mu(B))\right).$$
 When $F(x)=x^s, s > 0$ this can be written as
	$$\mu(t A +(1-t)B)^{s}\geq t\mu(A)^s +(1-t)\mu(B)^{s},$$
 and we say $\mu$ is $s$-concave. When $s=1$, we merely say the measure is concave.	In the limit as $s\rightarrow 0$, we obtain the case of log-concavity:
	$$\mu(t A +(1-t)B)\geq\mu(A)^{t}\mu(B)^{1-t}.$$
	 The classical Brunn-Minkowski inequality  (see for example \cite{Stein}) asserts the  $1/n$-concavity of the Lebesgue measure on the class of all compact subsets of $\R^n$. 

An immediate application of the results of Section~\ref{sec:res} is that we are able to obtain the following set inclusion: given a symmetric $K\in\conbod_0$ and $\mu$ an even measure that is $F$-concave, where $F$ is a non-negative, increasing and differentiable function, one has 
$$DK\subseteq \frac{F(\mu(K))}{F^\prime(\mu(K))}\Pi^\circ_{\mu}K.$$
Throughout this paper, we will state applications of proven results to the standard Gaussian measure $\R^n,$ denoted: $$d\gamma_n(x)=\frac{1}{(2\pi)^{\frac{n}{2}}}  e^{-\frac{|x|^2}{2}} dx.$$
Various concavity conditions have been established for $\gamma_n$, for example see \cite{PT,KL,GZ,EHR1,EHR2} for more on these topics. We conclude Section~\ref{sec:res} by proposing an alternative definition of the covariogram function to obtain results from the concavity of the Gaussian measure implied by the dimensional Brunn-Minkowski inequality for symmetric convex bodies, recently proven in \cite{EM}.

In Section~\ref{sec:functions}, we define the projection body of non-negative $f\in L^1(\mu,\partial K)$ where $\mu\in\Lambda$ with density $\phi$ and $K\in\conbod$, denoted $\Pi_{\mu,K} f$, to be the convex body whose support function is given by $$h_{\Pi_{\mu,K} f}(\theta)=\frac{1}{2}\int_{\partial K}|\langle\theta,n_K(y) \rangle| f(y)\phi(y) dy.$$ 
Furthermore, for a function $f\in L^1(\mu,K)$, the $\mu$-covariogram of $f$ is given by
$$g_{\mu,f}(K,r\theta)=\int_{K\cap(K+r\theta)}f(y-r\theta)\phi(y)dy.$$
Geometrically, the non-negative function $f$ is acting on $K+r\theta$ and the function $\phi$ is acting on $K.$ The main result of Section~\ref{sec:functions} can be stated in the following way: for $f\in L^1(\mu,\partial K)\cap L^1(\mu,K)$ that is bounded on a symmetric $K\in\conbod_0$ and differentiable almost everywhere, and even $\mu\in\Lambda$ with density Lipschitz on a domain containing $K$, one has
$$\diff{g_{\mu,f}(K,r\theta)}{r}\bigg|_{r=0}=-h_{\Pi_{\mu,K} f}(\theta).$$

In Sections~\ref{sec:log} and \ref{super_zhang_sec}, we prove results for these measure dependent projection bodies, that is for $\Pi_{\mu,K} f$ and $\Pi_\mu K$. For Section~\ref{sec:log}, the main result is Theorem~\ref{Qf_theorem}, which, in particular, encapsulates log-concave measures: let $f:\R^n\rightarrow \R^{+}$ be an even, bounded and differentiable almost everywhere function in $L^1(K)\cap L^1(\mu,\partial K)$ for some symmetric $K\in\conbod_0$, and $\mu\in\Lambda$ with an even, locally Lipschitz density $\phi$. Now, suppose $Q:(0, \infty)\rightarrow \R$ is an invertible, differentiable, increasing function such that $\lim_{r\to 0^+}Q(r)\in [-\infty,\infty)$ and $\mu$ is $Q$-concave. Then, if $Q^\prime(\|f\|_{L^1(\mu,K)})\neq 0$ and $Q \circ g_{\mu,f}(K,\cdot)$ is concave, one has the following: 
\begin{align*}
    \|f\|_{L^1(K)}\leq \frac{n\vol_n\left(\Pi_{\mu,K}^\circ f\right)}{\mu(K)\left[Q^\prime(\|f\|_{L^1(\mu,K)})\right]^{n}}
    \int_0^{\infty}Q^{-1}\left(Q\left(\|f\|_{L^1(\mu,K)}\right)-t\right)t^{n-1}dtd\theta,
\end{align*}
where
\begin{equation*}
    \|f\|_{L^1(\mu,K)}:=\int_{K}|f(y)|d\mu(y)=g_{\mu,f}(K,0)
\end{equation*}
is the $L^1\left(\mu,K\right)$ norm of a function $f$ over $K$. To make the above inequality more clear, we consider the case when $f=\chi_K$ is the characteristic function of a symmetric convex body $K$:
$$\vol_n(K)\leq \frac{n\vol_n\left(\Pi_{\mu}^\circ K\right)}{\mu(K)(Q^\prime(\mu(K)))^n} \int_0^{\infty}Q^{-1}\left(Q(\mu(K))-t\right)t^{n-1}dt.$$
In particular, when $Q(x)=\log x$ this inequality becomes $\vol_n(K) \leq n!\mu^{n}(K) \vol_n\left(\Pi_{\mu}^\circ K\right).$ We also show the results for more general $f,$ $K,$ and $\mu$, but they are more complicated and we will not reproduce them here.

In Section~\ref{super_zhang_sec}, we present another extension of Zhang's inequality to the setting of measure dependent projection bodies. Our results are in terms of the $\nu$-translation of an integrable function $f\in L^1(\mu,K)$, $K$ a compact set, averaged with respect to $\mu$:
$$\nu_{\mu}(f,K)=\frac{1}{\|f\|_{L^1(\mu,K)}}\int_{DK}g_{\mu,f}(K,x)d\nu(x).$$
For simplicity, we will merely state here the results for when $\mu$ and $f$ are even. Suppose $F$ is an increasing, differentiable, and invertible function such that $F(0)=0$ and $\mu$ is $F$-concave with locally Lipschitz density. Then, for symmetric $K\in\conbod_0$ and $f$ that is bounded and differentiable almost everywhere, one has
\begin{align*}
    \|f\|_{L^1(\mu,K)}\nu_{\mu}(f)\leq n \nu\left(\frac{F(\|f\|_{L^1(\mu,K)})}{F^\prime(\|f\|_{L^1(\mu,K)})}\Pi_{\mu,K}^\circ f\right)
    \int_{0}^1F^{-1}\left(F(\|f\|_{L^1(\mu,K)})t\right)(1-t)^{n-1}dt.
\end{align*}

We conclude with an application; in Section~\ref{sec:iso}, we obtain two reverse isoperimetric inequalities for $\mu(\partial K)$ under some constraint on $K$ and $\mu$. 

\noindent \textbf{Acknowledgements} We would like to thank the anonymous referees for the many helpful comments, greatly improving the structure and presentation of the ideas throughout this paper. We also thank Christos Saroglou for suggesting the application in Section~\ref{sec:iso}.

\section{Preliminaries} \label{preliminaries}

 In this section we will further collect basic definitions and formulas. For $K$ a star body, the volume of $K$ can be obtained using its radial function:
	\begin{equation}\label{e:volume}
	    \vol_n(K)=\int_{\s^{n-1}}\int_{0}^{\rho_K(\theta)}r^{n-1}drd\theta=\frac{1}{n}\int_{\s^{n-1}}\rho^n_K(\theta)d\theta.
	\end{equation}
	Moreover, if $K\in\conbod_0,\,\rho_K(x)=h^{-1}_{K^\circ}(x),$ and hence
	\begin{equation}\label{e:polar_volume}
	    \vol_n(K^\circ)=\frac{1}{n}\int_{\s^{n-1}}\rho^n_{K^\circ}(\theta)d\theta=\frac{1}{n}\int_{\s^{n-1}}h_{K}^{-n}(\theta)d\theta.
	\end{equation} A convex body $K\in\conbod$ can also be studied through its surface area measure: for every Borel $A \subset \s^{n-1},$ one has $$S_K(A)=\mathcal{H}^{n-1}(n^{-1}_K(A)),$$ where $\mathcal{H}^{n-1}$ is the $(n-1)$-dimensional Hausdorff measure and $n_K:\partial K \rightarrow \s^{n-1}$ is the Gauss map, which associates an element $y$ of $\partial K$ with its outer unit normal. For almost all $x\in\partial K$, $n_K(x)$ is well-defined (i.e. $x$ has a single outer unit normal). Since the set $N_K=\{x\in\partial K: n_K(x) \text{ is not well-defined}\}$ is of measure zero, we will continue to write $\partial K$ in place of $\partial K\setminus N_K,$ without any confusion. Using $S_K$, we can write (see \cite[Section 5.1]{Sh1}): $$\vol_n(K)=\frac{1}{n}\int_{\s^{n-1}}h_K(u)dS_K(u).$$ 
	
	We say that a convex body $K$ is {\bf strictly convex} if its boundary does not contain a line segment, in which case the Gauss map is an injection between $\partial K\setminus N_K$ and $\s^{n-1}$. The Gauss map is related to the support function. Fix $u\in\s^{n-1}$; then, $\nabla h_K(u)$ exists if, and only if, $n_K^{-1}(u)$ is a single point $x\in\partial K,$ and, furthermore, $\nabla h_K(u)=x=n^{-1}_K(u)$ \cite[Corollary 1.7.3]{Sh1}. Hence, $K$ is strictly convex if, and only if, $h_K\in C^1$ \cite[Page 115]{Sh1}. If $h_K$ is
 of class $C^2$, then one has $dS_K(\theta)$ is absolutely continuous with respect to the spherical Lebesgue measure \cite[Corollary 2.5.3]{Sh1}: $$dS_K(\theta) = f_K(\theta) d\theta,$$
	where $f_K \colon \s^{n-1} \to \R$ is the {\bf curvature function of $K$}, and is merely the reciprocal of the Gauss curvature as a function of the outer unit normal. The class $C^2_+$ is used to denote those convex bodies whose support functions are $C^2$ and have positive radii of curvature (this is equivalent to definition from differential geometry, see \cite[Page 120]{Sh1}). From \cite[Theorem 2.7.1]{Sh1} every convex body can be approximated by convex bodies that are $C^2_+.$

	 Using the surface area measure allows us to state \textit{Cauchy's projection formula} \cite{Sh1}: for $\theta\in\mathbb{S}^{n-1}$ we have 
	\begin{equation*}
	   \vol_{n-1}\left(P_{\theta^{\perp}}K\right) =\frac{1}{2}\int_{\s^{n-1}}|\langle \theta,u \rangle| dS_K(u).
	\end{equation*}
We can use the above formula to write the support function of $\Pi K$ in the integral form:
\begin{equation}\label{e:proj_support}
    h_{\Pi K}(\theta)=\vol_{n-1}(P_{\theta^{\perp}}K) =\frac{1}{2}\int_{\s^{n-1}}|\langle \theta,u \rangle| dS_K(u).
\end{equation}
 
 \noindent We next have a generalization of surface area to measures with density, stated formally by Livshyts \cite{GAL}. \begin{definition}
	For a convex body $K\in\conbod$ and a Borel measure $\mu$ with density $\phi$ integrable on the boundary of $K,$ the $\mu$-surface area is defined implicitly by
\begin{equation}
    \label{eq:surface_mu}
    S_{\mu,K}(E)=\int_{n_K^{-1}(E)}\phi(x)d\mathcal{H}^{n-1}(x)
\end{equation}
for every Borel set $E \subset \s^{n-1}.$ If $K$ is of class $C^2_+$, then one has that $dS_{\mu,K}(u)=\phi\left(n_K^{-1}(u)\right)f_K(u)du.$
\end{definition}

\noindent We recall the definition of the isotropic measures on the sphere; this will be vital in Section~\ref{sec:iso}.
\begin{definition}
Let $\nu$ be a Borel measure on $\s^{n-1}.$ Then, we say $\nu$ is isotropic if, for every $\theta\in\s^{n-1},$ one has
$$\int_{\s^{n-1}}\langle \theta,u \rangle^2 d\nu(u)=\frac{\nu(\s^{n-1})}{n}.$$
\end{definition}
 
\noindent Recall that the support of a function $f$ is given by
	$\supp(f)=\overline{\{x:f(x)> 0\}}.$ A function $q:\Omega\to\R$ is said to be \textit{Lipschitz} on a domain (open, connected set with non-empty interior) $\Omega$ if, for every $x,y\in\Omega$, one has
	$|q(x)-q(y)|\leq L|x-y|$ for some $L>0$. The infimum over all such $L$ where this inequality is true is the Lipschitz norm of $q$ on $\Omega$, denoted $\|q\|_{\text{Lip}(\Omega)}$.Throughout this paper, when we say a function $f$ is Lipschitz on a domain $\Omega$, we implicitly assume that the domain is strictly in the interior of the support of $f$. Recall that one of the (equivalent) ways of defining a locally Lipschitz function on $\R^n$ is by saying a function $f$ is locally Lipschitz on a domain $\Omega \subset \text{supp}(f)$ if it is Lipschitz on compact subsets of $\Omega$. For example, given $K\in\conbod,$ the function $\chi_K$ is not Lipschitz on $\R^n$. However, $\chi_K$ is Lipschitz when restricted to its support, and thus we will still refer to this function as being locally Lipschitz. The Rademacher theorem will be critical for our investigations, and so we remind the reader of it here.
\begin{lemma}[Rademacher, \cite{Evans}]
Let $f$ be locally Lipschitz on a domain $\Omega$. Then, $f$ is differentiable almost everywhere in $\Omega$.
\label{l:rade}
\end{lemma}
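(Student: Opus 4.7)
The plan is to follow the classical three-step argument (directional derivatives exist, they are a.e.\ linear in the direction, and a countable-dense approximation upgrades directional to full Fréchet differentiability), after first reducing to the globally Lipschitz setting. Since differentiability is a local property and $\Omega$ can be exhausted by an increasing sequence of compact subsets, I may assume without loss of generality that $f$ is Lipschitz on all of $\R^n$ (extending, say, by McShane's formula $\tilde f(x) = \inf_{y \in \Omega}\{f(y) + L|x-y|\}$), so that $|f(x) - f(y)| \le L|x - y|$ globally.

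First I would fix a direction $v \in \s^{n-1}$ and consider the one-dimensional slice $t \mapsto f(x + tv)$. This is a Lipschitz function of a single variable, hence absolutely continuous and therefore differentiable for a.e.\ $t \in \R$. Writing $B_v(x) := \{t : \partial_v f(x+tv) \text{ fails to exist}\}$, each slice has one-dimensional Lebesgue measure zero. By Fubini applied on the affine decomposition $\R^n = v^\perp \oplus \R v$, the set where $\partial_v f$ fails to exist has $n$-dimensional Lebesgue measure zero. So for each fixed $v$, the directional derivative $\partial_v f(x) := \lim_{t \to 0}(f(x+tv) - f(x))/t$ exists for a.e.\ $x$.

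The key step, and the main obstacle, is upgrading this to linearity of $v \mapsto \partial_v f(x)$ for a.e.\ $x$. For this I would use the distributional characterization: for any test function $\varphi \in C_c^\infty(\R^n)$, the difference quotients $(f(x+tv) - f(x))/t$ are uniformly bounded by $L|v|$ (the Lipschitz constant), so the dominated convergence theorem yields
\begin{equation*}
\int_{\R^n} \partial_v f(x)\,\varphi(x)\,dx \;=\; -\int_{\R^n} f(x)\,\partial_v \varphi(x)\,dx.
\end{equation*}
The right-hand side is plainly linear in $v$, so writing $v = \sum_i v_i e_i$ and comparing with the same identity applied to each $e_i$, one obtains
\begin{equation*}
\int_{\R^n} \Bigl(\partial_v f(x) - \sum_{i=1}^n v_i \,\partial_{e_i} f(x)\Bigr)\varphi(x)\,dx = 0
\end{equation*}
for every test $\varphi$, whence $\partial_v f(x) = \langle v, \nabla f(x)\rangle$ for a.e.\ $x$, where $\nabla f := (\partial_{e_1} f, \ldots, \partial_{e_n} f)$ is defined a.e.

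Finally, to turn this a.e.\ directional statement into pointwise Fréchet differentiability, I would pick a countable dense sequence $\{v_k\} \subset \s^{n-1}$ and let $A$ be the intersection (still of full measure) of the a.e.-sets on which $\partial_{v_k} f(x) = \langle v_k, \nabla f(x)\rangle$ for every $k$. For $x \in A$ and arbitrary $v \in \s^{n-1}$, I would estimate the error
\begin{equation*}
Q(t, v) \;:=\; \frac{f(x+tv) - f(x)}{t} - \langle v, \nabla f(x)\rangle
\end{equation*}
by splitting $v = v_k + (v - v_k)$ with $v_k$ close to $v$: the $v_k$ contribution vanishes as $t \to 0$ by definition of $A$, while the remainder is controlled by $2L|v - v_k|$ using the Lipschitz bound on both $f$ and the linear term $\langle \cdot,\nabla f(x)\rangle$ (whose norm is bounded by $L$). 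Choosing $v_k$ first, then $t$, yields $Q(t,v) \to 0$ uniformly in $v \in \s^{n-1}$, which is exactly Fréchet differentiability at $x$. Hence $f$ is differentiable on the full-measure set $A$, proving the claim.
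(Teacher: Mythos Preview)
Your argument is correct and is precisely the standard textbook proof of Rademacher's theorem (in fact, the one found in the cited reference \cite{Evans}). Note, however, that the paper does not supply its own proof of this lemma: it is stated as a classical result with a citation, and is used only as a tool later on. So there is no ``paper's proof'' to compare against; you have simply filled in the omitted textbook argument, and done so accurately.
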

	
	\noindent We now prove a lemma for estimating a concave function, where we say a function is concave if it is concave on its support.
	
\begin{lemma}
	\label{l:concave}
	Let $f$ be a concave function that is supported on a convex body $L\in\conbod_0$ such that, for some function $h(\theta)$,
	$$
\diff{f(r\theta)}{r}\bigg|_{r=0}\leq h(\theta) \leq 0, \quad  \text{for all }  \theta\in \s^{n-1},
$$ 
and $f(0) > 0.$ Define $z(\theta)=-h(\theta)^{-1}f(0),$ then
\begin{equation}\label{eq:concave_function} -\infty < f(r\theta)\leq f(0)\left[1-(z(\theta))^{-1}r\right]\end{equation}
whenever $\theta\in\s^{n-1}$ and $r\in[0,\rho_L(\theta)]$. In particular, if $f$ is non-negative, then we have
$$0 \leq f(r\theta)\leq f(0)\left[1-(z(\theta))^{-1}r\right] \quad  \mbox{and }  \rho_L(\theta)\leq z(\theta).
$$
One has $ f(r\theta)=f(0)\left[1-(z(\theta))^{-1}r\right]$ for $r\in[0,\rho_L(\theta)]$ if, and only if, $h(\theta)=\diff{f(r\theta)}{r}\bigg|_{r=0}$ and $\rho_L(\theta)=z(\theta).$
	\end{lemma}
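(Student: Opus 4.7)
The plan is to reduce everything to one real variable. Fix $\theta\in\s^{n-1}$ and put $g(r)=f(r\theta)$ for $r\in[0,\rho_L(\theta)]$. Since $f$ is concave on the convex set $L\supset\{r\theta:r\in[0,\rho_L(\theta)]\}$, the restriction $g$ is a concave function of one real variable, finite at $r=0$ with $g(0)=f(0)>0$, and it possesses a right derivative at $0$ equal to $\diff{f(r\theta)}{r}|_{r=0}$. The classical first-order inequality for concave functions then gives
\begin{equation*}
g(r)\leq g(0)+g'(0)\,r\qquad\text{for every }r\in[0,\rho_L(\theta)].
\end{equation*}

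Next I would feed the two hypotheses into this single line. Since $g'(0)\leq h(\theta)\leq 0$ and $r\geq 0$, the right-hand side is bounded by $f(0)+h(\theta)r$; substituting $h(\theta)=-f(0)/z(\theta)$ rewrites this as $f(0)\bigl[1-z(\theta)^{-1}r\bigr]$, which is the desired upper estimate \eqref{eq:concave_function}. The lower bound $-\infty<f(r\theta)$ is automatic: any proper concave function that is finite at one interior point of its effective domain remains finite on every compact subsegment through that point, and at $r=\rho_L(\theta)$ we are still on $L=\supp(f)$, where $f$ is finite by the standing assumption. When $f$ is additionally non-negative, evaluating the upper bound at the endpoint $r=\rho_L(\theta)$ gives
\begin{equation*}
0\leq f(\rho_L(\theta)\theta)\leq f(0)\bigl[1-z(\theta)^{-1}\rho_L(\theta)\bigr],
\end{equation*}
and since $f(0)>0$ this forces $\rho_L(\theta)\leq z(\theta)$.

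For the equality case I would use the rigidity that a concave function agrees with its tangent line from above on an interval precisely when it is itself affine on that interval. If $f(r\theta)=f(0)\bigl[1-z(\theta)^{-1}r\bigr]$ on $[0,\rho_L(\theta)]$, then comparing slopes at $r=0$ gives $g'(0)=-f(0)/z(\theta)=h(\theta)$, so the inequality $\diff{f(r\theta)}{r}|_{r=0}\leq h(\theta)$ is an equality; moreover the affine function vanishes at $r=z(\theta)$ while $f$ vanishes at $r=\rho_L(\theta)$ (by the definition of the support $L$), forcing $\rho_L(\theta)=z(\theta)$. Conversely, if both these equalities hold, the upper bound already reads $f(r\theta)\leq f(0)(1-r/z(\theta))$; combining this with the lower bound obtained from concavity of $g$ on $[0,z(\theta)]$ interpolating between $g(0)=f(0)$ and $g(z(\theta))=0$ produces the reverse inequality, giving equality throughout. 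The only subtle point in the whole argument is the careful handling of one-sided derivatives and the boundary value on $\partial L$; everything else is a direct application of concavity in one variable.
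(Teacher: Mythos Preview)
Your proof is correct and follows essentially the same route as the paper's: both reduce to the one-variable concave function $g(r)=f(r\theta)$, apply the tangent-line upper bound $g(r)\le g(0)+g'(0)r$, feed in $g'(0)\le h(\theta)$, read off $\rho_L(\theta)\le z(\theta)$ from non-negativity at the endpoint, and handle the equality case by combining the tangent upper bound with the chord lower bound coming from $g(0)=f(0)$ and $g(\rho_L(\theta))=0$. The only cosmetic difference is that you derive $g'(0)=h(\theta)$ directly by differentiating the assumed affine formula, whereas the paper routes this through $z(\theta)=\rho_L(\theta)$ first; both are equivalent.
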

	\begin{proof}
	From the concavity of the function $f$, one has
	$$f(r\theta)\leq f(0)\left[1+\diff{f(r\theta)}{r}\bigg|_{r=0}f(0)^{-1}r\right]\leq f(0)\left[1+h(\theta)f(0)^{-1}r\right],$$
	where the second inequality follows from the hypotheses. Then, the inequality \eqref{eq:concave_function} follows from the definition of $z(\theta)$. If $f$ is additionally non-negative, then one obtains that $\rho_L(\theta)\leq z(\theta)$ by using that $0\leq 1-(z(\theta))^{-1}r$ for a fixed $\theta\in\s^{n-1}$ and $r\in[0,\rho_L(\theta)],$ and then setting $r=\rho_L(\theta).$ For the equality conditions, suppose $f(r\theta)=f(0)\left[1-(z(\theta))^{-1}r\right];$ then, by definition of $L$ being the support of $f,$ one obtains $z(\theta)=\rho_L(\theta).$ From the definition of $z(\theta),$ this implies $h(\theta)=\diff{f(r\theta)}{r}\bigg|_{r=0}.$ Conversely, suppose $h(\theta)=\diff{f(r\theta)}{r}\bigg|_{r=0}$ and $\rho_L(\theta)=z(\theta).$ Then, from the concavity of $f$, one has $f(0)\left[1-(\rho_L(\theta))^{-1}r\right] \leq f(r\theta) \leq f(0)\left[1-(\rho_L(\theta))^{-1}r\right],$ and so there is equality.
	\end{proof}
   The next proposition shows how Lemma~\ref{l:concave} will be implemented in the paper. The proposition is a classical result \cite{GZA,Sh1,MA} on the properties of the covariogram of $K\in\conbod.$
\begin{proposition}
	For $K\in \conbod$, the associated covariogram $g_K$ is $1/n$-concave function on $DK$.   Furthermore, %the brightness function of $K$ is equal to negative the support of the projection body of $K$, that is
	\begin{equation*}
	    \diff{g_K(r\theta)}{r}\bigg|_{r=0}=-h_{\Pi K}(\theta), \quad \mbox{ for all } \theta \in \s^{n-1}.
	\end{equation*} In addition, for $\theta \in \s^{n-1}$ and $r \in [0,\rho_{DK}(\theta)],$ one has
	\begin{align*}
	    0\leq g_K(r\theta)\leq \vol_n(K)\left[1-\frac{h_{\Pi K}(\theta)}{n\vol_n(K)}r\right]^n \quad \text{and} \quad \rho_{DK}(\theta) \leq \rho_{n\vol_n(K)\Pi^\circ K}(\theta).
	\end{align*}
	Furthermore, $g_K(r\theta)^{1/n}=\vol^{1/n}_n(K)\left[1-\frac{h_{\Pi K}(\theta)}{n\vol_n(K)}r\right]$ if, and only if, $DK=n\vol_n(K)\Pi^\circ K.$
	\label{covario_prop}
	\end{proposition}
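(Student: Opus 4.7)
The plan proceeds in three stages: (i) deduce $1/n$-concavity of $g_K$ from Brunn-Minkowski, (ii) compute the one-sided radial derivative at the origin by a slicing argument, and (iii) feed these into Lemma~\ref{l:concave} applied to $g_K^{1/n}$. For stage (i), given $x_1,x_2\in DK$ and $t\in[0,1]$, set $x_t = tx_1+(1-t)x_2$ and observe the inclusion
$$K\cap(K+x_t)\supseteq t\,[K\cap(K+x_1)] + (1-t)\,[K\cap(K+x_2)],$$
which follows from the convexity of $K$ applied to both of the conditions $y_i\in K$ and $y_i-x_i\in K$. Brunn-Minkowski in $\R^n$ then yields $g_K(x_t)^{1/n}\ge t\,g_K(x_1)^{1/n} + (1-t)\,g_K(x_2)^{1/n}$, which is precisely the required $1/n$-concavity.

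Stage (ii) is the technical heart. Since $K\cap(K+r\theta)\subseteq K$, one has $g_K(0)-g_K(r\theta)=\vol_n(K\setminus(K+r\theta))$. Slice $K$ along lines parallel to $\theta$: for each $y\in P_{\theta^\perp}K$, the chord $(y+\R\theta)\cap K$ is an interval $[a(y),b(y)]$ whose intersection with $K+r\theta$ is $[a(y)+r,b(y)+r]$, so that the $1$-dimensional measure of the difference (restricted to $K$) is $\min(r,b(y)-a(y))$. Fubini's theorem gives
$$\frac{g_K(0)-g_K(r\theta)}{r} = \int_{P_{\theta^\perp}K}\frac{\min(r,b(y)-a(y))}{r}\,dy.$$
The integrand is bounded by $1$ and converges pointwise to $1$ on the full-measure set $\{b(y)>a(y)\}$, so dominated convergence yields $\lim_{r\to 0^+}[g_K(0)-g_K(r\theta)]/r = \vol_{n-1}(P_{\theta^\perp}K) = h_{\Pi K}(\theta)$, i.e., $\partial_r g_K(r\theta)|_{r=0} = -h_{\Pi K}(\theta)$.

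For stage (iii), apply Lemma~\ref{l:concave} to $f(x)=g_K(x)^{1/n}$, which is concave on its support $DK$ by stage (i), non-negative, and satisfies $f(0)=\vol_n(K)^{1/n}>0$. Stage (ii) and the chain rule give
$$\diff{f(r\theta)}{r}\bigg|_{r=0} = \frac{-h_{\Pi K}(\theta)}{n\vol_n(K)^{(n-1)/n}}\le 0,$$
so one takes $h(\theta)$ equal to this quantity in Lemma~\ref{l:concave}. A direct computation of the cutoff gives $z(\theta) = -h(\theta)^{-1}f(0) = n\vol_n(K)/h_{\Pi K}(\theta) = \rho_{n\vol_n(K)\Pi^\circ K}(\theta)$, and the lemma delivers the pointwise bound on $g_K(r\theta)^{1/n}$ (hence on $g_K(r\theta)$ after raising to the $n$-th power) and the inclusion $\rho_{DK}(\theta)\le\rho_{n\vol_n(K)\Pi^\circ K}(\theta)$. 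The equality characterization in Lemma~\ref{l:concave} translates immediately into $DK = n\vol_n(K)\Pi^\circ K$, since the derivative condition is automatic from stage (ii). The main obstacle is stage (ii): one must justify the slicing identity and apply dominated convergence carefully, and then recognize the resulting $\vol_{n-1}(P_{\theta^\perp}K)$ as $h_{\Pi K}(\theta)$; the remaining two stages are formal consequences of Brunn-Minkowski and Lemma~\ref{l:concave}.
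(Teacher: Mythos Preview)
Your proof is correct. Stages (i) and (iii) match the paper's argument essentially verbatim: the same set inclusion feeds Brunn--Minkowski, and Lemma~\ref{l:concave} is applied to $g_K^{1/n}$ (the paper normalizes by $\vol_n(K)$ first, which is cosmetic). The only genuine difference is stage~(ii). The paper does not prove the derivative formula directly here; instead it defers to Remark~\ref{re:old_fact}, where the identity falls out as the special case $\mu=\lambda$ of the general computation $\partial_r g_{\mu,K}(r\theta)|_{r=0}=-h_{\Pi_\mu K-\eta_{\mu,K}}(\theta)$, proved via the boundary lemmas of Section~\ref{sec:lem} (decomposing $(K+r\theta)\setminus K$ and passing to a surface integral). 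Your slicing argument is the classical route (essentially Matheron's, and what one finds in \cite{GZA} or \cite{MA}): it is shorter and entirely self-contained for the Lebesgue case, exploiting translation invariance to reduce to a one-dimensional chord computation. The paper's approach is heavier here but is what generalizes to arbitrary densities $\phi$, where the slicing trick no longer collapses so cleanly.
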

	\begin{proof}
	Using the fact that $$(1-t)[K\cap(K+x)]+t[K\cap(K+y)] \subset K \cap (K+(1-t)x+t y)$$ and the Brunn-Minkowski inequality, we have that $g_K^{1/n}(x)$ is concave for $x\in DK$.
	The next claim is shown in \cite{GZA}, and we will present a  proof from a more general case in Remark~\ref{re:old_fact} below.
	The last two claims follow from Lemma~\ref{l:concave}, with $f(x)=\left(\frac{g_K(x)}{\vol_n(K)}\right)^{1/n}$, $h(\theta)=\diff{f(r\theta)}{r}\bigg|_{r=0}=-\frac{h_{\Pi K}(\theta)}{n\vol_n(K)}$ and $L=DK$.
	\end{proof}
	
	\noindent We conclude this section with the following characterization of a simplex, which shows 
	
	\noindent $g_K(r\theta)^{1/n}=\vol^{1/n}_n(K)\left[1-\frac{h_{\Pi K}(\theta)}{n\vol_n(K)}r\right]$ for $\theta \in \s^{n-1}$ and $r \in [0,\rho_{DK}(\theta)]$ if, and only if, $K$ is a simplex.
	\begin{proposition}
	\label{p:simp}
	Let $K\in\conbod.$ The following are true if, and only if, $K$ is a simplex.
	\begin{enumerate}
	    \item \label{item:hom} Let $\theta\in\s^{n-1}$ and pick $r>0$ so that $K\cap (K+r\theta)\neq\emptyset.$ Then, $K\cap (K+r\theta)$ is homothetic to $K$ (\cite{RS1}, \cite[Section 6]{EGK64}, or \cite{Choquet}).
	    \item \label{item:aff} For every $\theta\in\s^{n-1}$, $g_{K}(r\theta)^{1/n}$ is an affine function in $r$ for $r\in[0,\rho_{DK}(\theta)]$ (Proposition~\ref{covario_prop} and Proposition~\ref{p:simp}, Item~\ref{item:hom}).
	    \item $DK=n\vol_n(K)\Pi^\circ K$ (Proposition~\ref{covario_prop} and Proposition~\ref{p:simp}, Item~\ref{item:aff}).
	\end{enumerate}
	\end{proposition}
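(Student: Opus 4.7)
The proof will establish the chain ($K$ is a simplex) $\Leftrightarrow$ Item~\ref{item:hom} $\Leftrightarrow$ Item~\ref{item:aff} $\Leftrightarrow$ Item~3, so that each of the three items is separately equivalent to $K$ being a simplex.

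The equivalence ($K$ simplex) $\Leftrightarrow$ Item~\ref{item:hom} is classical, and I would simply invoke the references \cite{RS1}, \cite{EGK64}, and \cite{Choquet} already provided. The equivalence ($K$ simplex) $\Leftrightarrow$ Item~3 follows from the equality characterization of \eqref{e:set_inclusion} already recalled in the excerpt: equality in the inclusion $DK\subseteq n\vol_n(K)\Pi^\circ K$ holds if, and only if, $K$ is a simplex.

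The analytic content lies in the equivalence Item~\ref{item:aff} $\Leftrightarrow$ Item~3. The last statement of Proposition~\ref{covario_prop} already gives the reverse direction: $DK=n\vol_n(K)\Pi^\circ K$ forces
$$g_K(r\theta)^{1/n}=\vol_n(K)^{1/n}\left[1-\frac{h_{\Pi K}(\theta)}{n\vol_n(K)}r\right],$$
which is affine, yielding Item~\ref{item:aff}. For the forward direction, suppose Item~\ref{item:aff} holds. Proposition~\ref{covario_prop} pins down both $g_K(0)^{1/n}=\vol_n(K)^{1/n}$ and the initial slope
$$\left.\diff{g_K(r\theta)^{1/n}}{r}\right|_{r=0}=-\frac{h_{\Pi K}(\theta)}{n\,\vol_n(K)^{(n-1)/n}},$$
so the affine function is uniquely determined as the display above. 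This vanishes at $r=n\vol_n(K)/h_{\Pi K}(\theta)=\rho_{n\vol_n(K)\Pi^\circ K}(\theta)$, while $g_K(r\theta)$ vanishes precisely at $r=\rho_{DK}(\theta)$. Comparing the two zeros for every $\theta\in\s^{n-1}$ yields $DK=n\vol_n(K)\Pi^\circ K$, which is Item~3.

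The only obstacle I anticipate is the temptation to try to establish Item~\ref{item:aff} directly from Item~\ref{item:hom} by writing $K\cap(K+r\theta)=c(r,\theta)K+v(r,\theta)$ and hoping to conclude that $c(r,\theta)$ is affine in $r$ from concavity and the boundary values $c(0,\theta)=1$, $c(\rho_{DK}(\theta),\theta)=0$ alone. This is insufficient; one additionally needs the initial slope to match the chord reaching $\rho_{DK}(\theta)$, which is precisely Item~3. Routing the argument through Item~3 via the derivative identity of Proposition~\ref{covario_prop} is therefore the cleanest path and sidesteps this difficulty entirely.
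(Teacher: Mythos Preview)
Your proof is correct. The paper does not supply a separate proof environment for this proposition; the parenthetical references are its proof sketch, and your argument fills these in with the same essential tools—external references for Item~\ref{item:hom}, and Proposition~\ref{covario_prop} (together with its equality clause inherited from Lemma~\ref{l:concave}) for the passage between Item~\ref{item:aff} and Item~3.

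There is one organizational difference worth noting. The paper builds the chain sequentially, deriving Item~\ref{item:aff} from Item~\ref{item:hom} and then Item~3 from Item~\ref{item:aff}. You instead close the chain by invoking the Gardner--Zhang equality case of \eqref{e:set_inclusion} for Item~3 $\Leftrightarrow$ simplex directly, and then obtain Item~\ref{item:aff} from Item~3. Your route is the cleaner of the two: as you correctly observe, passing from Item~\ref{item:hom} to Item~\ref{item:aff} (the paper's direction) is not immediate from homothety alone, since concavity of the homothety ratio $c(r,\theta)$ together with its endpoint values does not by itself force affinity. Filling that gap requires either an explicit computation for the simplex or the equality case of Brunn--Minkowski to first deduce Item~\ref{item:aff} $\Rightarrow$ Item~\ref{item:hom} and then close via Item~3 anyway. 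Your detour through the already-recalled equality case of \eqref{e:set_inclusion} avoids this entirely at the cost of one additional external citation, and your explicit verification that the affine form forces $\rho_{DK}(\theta)=n\vol_n(K)/h_{\Pi K}(\theta)$ is exactly what the equality clause of Proposition~\ref{covario_prop} encodes.
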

	
	\section{Generalizations of Zhang's Inequality}
	\label{generalizations}
	\begin{theorem}[Zhang-type Inequality for General Measures]
	\label{t:weak_zhang}
	For $K\in\conbod$ and $\mu \in \Lambda$ we have
	\begin{equation*}
	    \mu_\lambda(K)\leq \mu\left(n\vol_n(K)\Pi^{\circ} K\right).
	\end{equation*}
%	where $\mu_\lambda(K)$ is given by \eqref{e:mu_average} and $\Pi^\circ K$ is the convex body whose radial function is given by \ref{e:proj_support}.
Furthermore, this inequality is sharp over $\Lambda,$ in the sense that equality is obtained asymptotically for the measure $\gamma_n.$
	\end{theorem}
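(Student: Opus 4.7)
My plan is to dispatch the inequality essentially for free and then spend the real effort on constructing a dilating sequence witnessing asymptotic sharpness.

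For the inequality itself, the key observation is that it is nothing more than a restatement of \eqref{eq:av}, which was already recorded in the introduction. The plan is simply to unwind the definition of $\mu_\lambda(K)$ from \eqref{e:mu_average} and exploit two facts from Proposition~\ref{covario_prop}: the covariogram $g_K$ is supported on $DK$ and satisfies $g_K(x) \leq \vol_n(K)$ (since $K\cap(K+x)\subseteq K$), while $DK \subseteq n\vol_n(K)\Pi^\circ K$. Concretely, I would write
\[
\mu_\lambda(K) \;=\; \frac{1}{\vol_n(K)}\int_{DK} g_K(x)\,d\mu(x) \;\leq\; \mu(DK) \;\leq\; \mu\bigl(n\vol_n(K)\Pi^\circ K\bigr).
\]
No concavity of $\mu$ or symmetry of $K$ is required, which matches the generality of the hypothesis $\mu \in \Lambda$.

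The substance is the sharpness claim for $\gamma_n$. My plan is to take a fixed $K_0 \in \conbod$ and dilate: $K_\ell := \ell K_0$ with $\ell \to \infty$. Using the homogeneity $\Pi(cK) = c^{n-1}\Pi K$ (hence $\Pi^\circ(cK) = c^{-(n-1)}\Pi^\circ K$) and $\vol_n(cK) = c^n\vol_n(K)$, I compute
\[
n\vol_n(K_\ell)\Pi^\circ K_\ell \;=\; n\ell\,\vol_n(K_0)\,\Pi^\circ K_0,
\]
which dilates the fixed convex body $n\vol_n(K_0)\Pi^\circ K_0$ (containing the origin in its interior) by a factor $\ell \to \infty$, so these sets exhaust $\R^n$ and therefore $\gamma_n\bigl(n\vol_n(K_\ell)\Pi^\circ K_\ell\bigr)\to 1$.

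It remains to establish the matching lower bound $(\gamma_n)_\lambda(K_\ell) \to 1$. For this I rewrite
\[
(\gamma_n)_\lambda(K_\ell) \;=\; \int_{\R^n} \frac{g_{K_\ell}(x)}{\vol_n(K_\ell)}\,d\gamma_n(x)
\]
and apply the change of variables $g_{K_\ell}(x) = \ell^n g_{K_0}(x/\ell)$, which gives integrand $g_{K_0}(x/\ell)/\vol_n(K_0)$. For each fixed $x\in\R^n$ this tends, as $\ell\to\infty$, to $g_{K_0}(0)/\vol_n(K_0) = 1$ by continuity of $g_{K_0}$ at the origin. The integrand is dominated by $1$, which is $\gamma_n$-integrable since $\gamma_n$ is a probability measure, so the dominated convergence theorem delivers $(\gamma_n)_\lambda(K_\ell)\to 1$. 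Combining the two limits, the ratio of the two sides tends to $1$, which is exactly what asymptotic sharpness over $\Lambda$ requires. The only subtle point to verify carefully is the dominated convergence step, but since the pointwise and uniform bounds are both transparent it should go through without incident.
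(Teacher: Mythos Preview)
Your proposal is correct and follows essentially the same approach as the paper: both dispatch the inequality by citing \eqref{eq:av} and establish sharpness by dilating a fixed body (the paper uses $RB_2^n$, you allow a general $\ell K_0$) and showing both sides tend to $1$ via dominated convergence. The only cosmetic difference is that the paper argues $(\gamma_n)_\lambda(RB_2^n)\to 1$ through the translate-average form $\frac{1}{\kappa_n}\int_{B_2^n}\gamma_n(R(B_2^n-x))\,dx$ and a geometric containment, whereas you use the covariogram scaling $g_{\ell K_0}(x)=\ell^n g_{K_0}(x/\ell)$ and continuity of $g_{K_0}$ at the origin; these are equivalent rearrangements of the same limit.
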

\begin{proof}	
The inequality was shown in $\eqref{eq:av}$ above; all that remains to show is the sharpness over $\Lambda$. Consider the case of  Gaussian measure $\gamma_n$ 
	and  $K=RB^n_2$, for $R>0$. %, the ball centered at the origin of radius $R$. 
	Then, $\Pi K=R^{n-1}\kappa_{n-1}B^{n}_2$ and $\Pi^\circ K=R^{1-n}\kappa_{n-1}^{-1}B^{n}_2.$ This yields $n\vol_n(K)\Pi^\circ K=nR\frac{\kappa_n}{\kappa_{n-1}}B^{n}_2.$ Thus, 
	$$\lim_{R\rightarrow\infty}\gamma_n(n\vol_{n}(RB^n_2)\Pi^\circ RB^n_2)=\gamma_n(\R^n)=1.$$
	Similarly, we can write, using the substitution $y=Rx$ and the evenness of $\gamma_n$:
	$$(\gamma_{n})_\lambda(RB^n_2):=\frac{1}{R^n\kappa_n}\int_{RB^n_2}\gamma_n(y-RB^n_2)dy=\frac{1}{\kappa_n}\int_{B^n_2}\gamma_n(R(B^n_2-x))dx.$$
	If $x$ is strictly in the interior of $B^n_2$, then one can find $r_x>0$ such that $r_xB^n_2 \subset B_2^n-x$; thus, $$1\geq\gamma_n(R(B^n_2-x))\geq\gamma_n(Rr_xB^n_2).$$
	But, the right-hand side goes to $1$ monotonically as $R\to\infty,$ 
	and so the sequence $\gamma_n(R(B^n_2-x))$ also monotonically increases to $1$ as $R\rightarrow \infty$. Hence, applying say dominated convergence yields
	$$\lim_{R\rightarrow\infty}(\gamma_{n})_\lambda(RB^n_2)=\frac{1}{\kappa_n}\int_{B^n_2}\gamma_n(\R^n)dx=1.$$
	\end{proof}
	We will now consider the case for $\nu\in\Lambda_{\text{rad}}$. This seems restrictive, but this requirement is necessary to obtain a generalization of Zhang's inequality that reduces to \eqref{e:Zhang_ineq} in the case of the Lebesgue measure. We start with a crucial result, Lemma~\ref{t:chak} below.

\begin{lemma}\label{t:chak}
Let $\nu\in\Lambda_{\text{rad}}$ have density $\varphi$, and let $f:\R^n\to\R^{+}$ be a compactly supported concave function such that $0\in \text{int}(\supp(f))$ and $f(0)=\max f(x).$
If $q \colon \R^{+} \to \R$ is an increasing function, then
\begin{equation*}
\int_{\supp(f)} q(f(x)) d\nu(x) \leq \beta \int_{\s^{n-1}}\int_0^{z(\theta)}\varphi(r\theta)r^{n-1}drd\theta,
\end{equation*}
where 
$$
z(\theta)=-\left(\diff{f(r\theta)}{r}\bigg|_{r=0}\right)^{-1}f(0) \quad \text{and } \quad \beta =n\int_0^1 q(f(0)t) (1-t)^{n-1} dt.$$
Equality occurs if, and only if, $z(\theta)=\rho_{supp(f)}(\theta)$ and, for every $\theta\in\s^{n-1},$ $r>0,$ $\varphi(r\theta)$ is independent of $r$ and $f(r\theta)$ is an affine function on its support.
\end{lemma}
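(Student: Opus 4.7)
The plan is to reduce the multidimensional integral on the left to a one-dimensional radial inequality by passing to polar coordinates, to replace $f$ along each ray by the affine majorant supplied by Lemma~\ref{l:concave}, and then to apply a Chebyshev (FKG-type) correlation inequality that exploits the opposite monotonicity of the resulting affine factor and the density $\varphi$.

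Concretely, first I would rewrite
\begin{equation*}
\int_{\supp(f)} q(f(x))\,d\nu(x) = \int_{\s^{n-1}} \int_0^{\rho_{\supp(f)}(\theta)} q(f(r\theta))\,\varphi(r\theta)\, r^{n-1} dr\, d\theta.
\end{equation*}
Since $f(0)=\max f$, the radial derivative $\diff{f(r\theta)}{r}\bigg|_{r=0}$ is non-positive, and Lemma~\ref{l:concave} applied with $h(\theta)=\diff{f(r\theta)}{r}\bigg|_{r=0}$ and $L=\supp(f)$ gives $\rho_{\supp(f)}(\theta)\leq z(\theta)$ together with the pointwise bound $f(r\theta)\leq f(0)[1-z(\theta)^{-1}r]$ on $[0,\rho_{\supp(f)}(\theta)]$. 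Since $q$ is increasing this upgrades to $q(f(r\theta))\leq q(f(0)[1-z(\theta)^{-1}r])$, and, using non-negativity of the integrand, the inner integral extends from $[0,\rho_{\supp(f)}(\theta)]$ up to $[0,z(\theta)]$. On this enlarged interval, $r\mapsto q(f(0)[1-z(\theta)^{-1}r])$ is non-increasing while $r\mapsto \varphi(r\theta)$ is non-decreasing --- which is the precise point at which $\nu\in\Lambda_{\text{rad}}$ is used --- so Chebyshev's integral inequality with the positive weight $r^{n-1}\,dr$ yields
\begin{equation*}
\int_0^{z(\theta)} q(f(0)[1-z(\theta)^{-1}r])\,\varphi(r\theta)\, r^{n-1}dr \cdot \frac{z(\theta)^n}{n} \leq \int_0^{z(\theta)} q(f(0)[1-z(\theta)^{-1}r])\, r^{n-1}dr \cdot \int_0^{z(\theta)} \varphi(r\theta)\, r^{n-1}dr.
\end{equation*}
A substitution $t=1-z(\theta)^{-1}r$ identifies the first factor on the right as $z(\theta)^n\beta/n$; rearranging and integrating over $\s^{n-1}$ then produces the claimed inequality.

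For the equality characterization, equality in Lemma~\ref{l:concave} forces $\rho_{\supp(f)}(\theta)=z(\theta)$ and $f(r\theta)$ affine in $r$ along each ray, while Chebyshev equality requires one of the two monotone factors to be constant; since $q$ is increasing the first factor is non-constant in general, forcing $\varphi(r\theta)$ to be independent of $r$ on each ray, as claimed. The main delicate point I would watch is the step where the radial integration is enlarged from $\rho_{\supp(f)}(\theta)$ to $z(\theta)$: this relies on non-negativity of the integrand on the added interval, which is automatic once $q\geq 0$ on $[0,f(0)]$ (the natural setting in which the lemma will be invoked). Otherwise the two principal ingredients --- the affine majorant from concavity of $f$ and the correlation inequality driven by the radially non-decreasing density --- fit together cleanly once the one-dimensional reduction has been made.
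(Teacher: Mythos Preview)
Your argument is correct and arrives at the same inequality, but the route genuinely differs from the paper's. Both proofs begin identically: polar coordinates, the affine majorant from Lemma~\ref{l:concave}, and the extension of the radial integral from $[0,\rho_{\supp(f)}(\theta)]$ to $[0,z(\theta)]$. From there, however, the paper does not invoke Chebyshev. Instead, for each fixed direction it introduces the auxiliary function
\[
\psi(y)=\beta\int_0^y \varphi(r)r^{n-1}dr-\int_0^y q\!\left(f(0)\Bigl[1-\tfrac{r}{y}\Bigr]\right)\varphi(r)r^{n-1}dr,
\]
observes $\psi(0^+)=0$, computes $\psi'(y)$ explicitly, and uses the radial monotonicity of $\varphi$ via the ratio $\varphi(uy)/\varphi(y)\le 1$ to force $\psi'(y)\ge 0$ for the stated value of $\beta$; evaluating at $y=z(\theta)$ then gives the bound. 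Your Chebyshev step is essentially a packaged version of this same mechanism: the opposite monotonicity of $r\mapsto q(f(0)[1-r/z(\theta)])$ and $r\mapsto\varphi(r\theta)$ against the weight $r^{n-1}dr$ is exactly what the paper's differentiation-and-sign argument is exploiting, so your proof is shorter and more conceptual, while the paper's is self-contained and makes the dependence on $\varphi(uy)\le\varphi(y)$ visible at the level of the integrand.

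Two remarks. First, the caveat you flag about extending the radial range when $q$ may take negative values is real, and the paper's proof passes over the same point at its equation~\eqref{e:ch1}; in every application in the paper $q\ge 0$, so neither argument is affected in practice. Second, your equality analysis via the Chebyshev equality case is fine provided $q$ is not constant on any subinterval of $[0,f(0)]$ (otherwise the ``$g$ non-constant forces $h$ constant'' deduction needs a small extra word); the paper obtains the same conclusion by reading off $\varphi(uy)=\varphi(y)$ from equality in its derivative estimate.
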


\begin{proof} Set $L=\supp(f)$, which is a convex body with non-empty interior by the assumptions on $f$. Since $f$ is concave and $0\in \text{int}(\supp(f))$, we have that the directional derivative at $0$ exists, and, since $f$ obtains its maximum at $0$, we have
$$
\diff{f(r\theta)}{r}\bigg|_{r=0}\leq0, \quad  \text{for all }  \theta\in \s^{n-1}. 
$$ 
Hence, from Lemma~\ref{l:concave}, since $f$ is non-negative we have that
$$0 \leq f(r\theta)\leq f(0)\left[1-(z(\theta))^{-1}r\right] 
\quad \text{and} \quad  \rho_L(\theta)\leq z(\theta).$$
Integrating in polar coordinates, we may write the left-hand side as
\[
\int_{L} q(f(x)) d\nu(x) = \int_{\s^{n-1}}\int_0^{\rho_L(\theta)} q(f(r\theta)) \varphi(r\theta) r^{n-1} dr d\theta. 
\]
Hence, we have
\begin{equation}\label{e:ch1}
\int_{L} q(f(x)) d\nu(x) \leq \int_{\s^{n-1}}\int_0^{z(\theta)} q\left( f(0)\left[1-\frac{r}{z(\theta)}\right]\right) \varphi(r\theta) r^{n-1} dr d\theta. 
\end{equation}
Fix some direction $\theta \in \s^{n-1}$ and consider the function $\psi \colon (0,z(\theta)] \to \R^{+}$ given by, for $y\in (0,z(\theta)],$ 
\[
\psi(y)=\psi_\theta(y):= \beta \int_0^y \varphi(r) r^{n-1} dr - \int_0^y q\left(f(0)\left[1 - \frac{r}{y}\right]\right)\varphi(r) r^{n-1} dr,
\]
where $\beta > 0$ is a constant, independent of the direction $\theta$ and the measure $\nu$, chosen such that $\psi(y) \geq 0,$ and $\varphi(r):=\varphi(r\theta)$ for every $r>0$. Using the fact that $\varphi$ is continuous together with the fact that $q$ is integrable on each segment $(0,y] \subset (0,\infty)$, we may assert that $\psi(y) \to 0$ as $y \to 0^+$. Since $\psi$ is absolutely continuous on each $[a,b] \subset (0,y]$, $\psi$ may be represented by 
\[
\psi(y) = \psi(a) + \int_a^y \psi'(s) ds.
\]
Consequently, to have $\beta >0$ to be such that $\psi(y) \geq 0$, it suffices for $\beta$ to be selected so that $\psi'(y) \geq 0$ for almost every $y \in (0,z(\theta)]$. Differentiation of $\psi$ yields the representation
\[
\psi'(y) = \beta \varphi(y) y^{n-1} -q(0)\varphi(y)y^{n-1}-f(0) \int_0^y q'\left(f(0)\left[1 - \frac{r}{y}\right]\right)\frac{r^n}{y^2} \varphi(r) dr. 
\]
From the positivity of $\varphi(y)y^{n-1}$, we see that we must have
\[
\beta \geq q(0)+ f(0)\int_0^y q'\left(f(0)\left[1 - \frac{r}{y}\right]\right)\frac{r^n}{y^{n+1}} \frac{\varphi(r)}{\varphi(y)}dr,
\]
or equivalently, applying the change of variables $u=r/y$, we see that $\beta$ must satisfy
\[
\beta \geq q(0)+f(0) \int_0^1 q'(f(0)(1-u)) u^n \frac{\varphi(uy)}{\varphi(y)}du.
\]
Using that $\varphi$ radially non-decreasing implies $\varphi(uy)\leq \varphi(y)$ for $u\in[0,1]$, we see that setting $\beta$ to be the following satisfies our requirement:
\[
\beta = q(0)+f(0) \int_0^1 q'(f(0)(1-u)) u^n du = q(0)+f(0) \int_0^1 q'(f(0)t) (1-t)^n dt,
\]
where the second equality follows from the change of variables $t= 1-u$. Using integration by parts then yields
\[
\beta = n\int_0^1 q(f(0)t) (1-t)^{n-1} dt.
\]
Thus, \eqref{e:ch1} implies that 
\begin{align*}
\int_{L} q(f(x)) d\nu(x) &\leq  \beta \int_{\s^{n-1}} \int_0^{z(\theta)} \varphi(r\theta) r^{n-1} dr d\theta;
\end{align*}
we see that equality occurs when $z(\theta)=\rho_L(\theta)$ and $\varphi(uy)=\varphi(y)$ for all $u\in[0,1]$, which means $\varphi(u)$ is a constant. Furthermore, equality implies $f(r\theta)=f(0)\left(1-(z(\theta))^{-1}r\right),$ that is $f$ is affine on each ray. Thus, we have our claim.
\end{proof}
\begin{remark}
The result of Lemma~\ref{t:chak} is reminiscent of the following lemma by Berwald \cite{Berlem}:
Consider $K\in\conbod$, a concave function $f:K\to \R^+$ and a strictly increasing function $\psi:(0,\infty)\to\R$. Then, there exists a unique $\xi >0$ such that
$$\frac{1}{\vol_n(K)}\int_K\psi(f(x))dx=n\int_0^1\psi(\xi t)(1-t)^{n-1}dt,$$
and for every convex function $q$ such that the support of $q$ is an interval containing $\psi((0,\infty)),$ one has
$$\frac{1}{\vol_n(K)}\int_Kq(\psi(f(x)))dx\leq n\int_0^1q(\psi(\xi t))(1-t)^{n-1}dt.$$
The class of functions where equality occurs was established by Borell \cite{Borlem}.
\end{remark}

\begin{remark}
In the proof of Lemma~\ref{t:chak}, the following one-dimensional result was shown:
fix a positive integer $n$ and $\xi, s \in (0,\infty).$ Consider a continuous, monotonically increasing function $\phi$ on $(0,s)$, and let $q:\R^+\to\R$ be an increasing function. Then, for every $y\in (0,s),$ one has,
$$\beta \int_0^y \phi(r) r^{n-1} dr \geq \int_0^y q\left(\xi\left[1 - \frac{r}{y}\right]\right)\phi(r) r^{n-1} dr,$$
where
$$\beta =n\int_0^1 q(\xi t) (1-t)^{n-1} dt.$$
\end{remark}

\begin{theorem}[Zhang's Inequality for Radially Non-Decreasing Measures]
\label{th:zhnd}
Consider $\nu\in\Lambda_{\text{rad}}$ with density $\varphi$ and $K\in\conbod.$  Then,
\begin{equation*}
    {2n \choose n}\max\{\nu_\lambda(K),\nu_\lambda(-K)\}\leq\nu(n\vol_n(K)\Pi^\circ K),
\end{equation*}
with equality if, and only if, for every $\theta\in\s^{n-1},$ $\varphi(r\theta)$ is independent of $r,$ $r>0,$ and $K$ is a simplex.
\end{theorem}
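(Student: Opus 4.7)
The strategy is to apply Lemma~\ref{t:chak} to the $1/n$-th power of the covariogram. Set $f(x) = g_K(x)^{1/n}$ and $q(t) = t^n$. By Proposition~\ref{covario_prop}, $f$ is concave on its compact support $DK$, with $0 \in \operatorname{int}(DK)$ and $f(0) = \vol_n(K)^{1/n}$; moreover $f(x) \leq f(0)$ since $g_K(x) = \vol_n(K \cap (K+x)) \leq \vol_n(K)$, so $f(0) = \max f$, matching the hypotheses of Lemma~\ref{t:chak}.

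Next I compute the two quantities that feed into Lemma~\ref{t:chak}. Differentiating $f(r\theta) = g_K(r\theta)^{1/n}$ at $r=0$ and invoking $\diff{g_K(r\theta)}{r}\big|_{r=0} = -h_{\Pi K}(\theta)$ from Proposition~\ref{covario_prop}, the chain rule gives
$$z(\theta) = -\left(\diff{f(r\theta)}{r}\bigg|_{r=0}\right)^{-1}\! f(0) = \frac{n\,\vol_n(K)}{h_{\Pi K}(\theta)} = \rho_{n\vol_n(K)\Pi^\circ K}(\theta).$$
For the constant, $\beta = n \int_0^1 (\vol_n(K)^{1/n}t)^n (1-t)^{n-1}\,dt = n\vol_n(K)\, B(n+1,n) = \vol_n(K)/\binom{2n}{n}$. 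Since $q(f(x)) = g_K(x)$ and $\int_{\s^{n-1}}\int_0^{z(\theta)} \varphi(r\theta)r^{n-1}\,drd\theta = \nu(n\vol_n(K)\Pi^\circ K)$ by polar coordinates, Lemma~\ref{t:chak} yields
$$\int_{DK} g_K(x)\,d\nu(x) \;\leq\; \frac{\vol_n(K)}{\binom{2n}{n}}\,\nu\!\left(n\vol_n(K)\Pi^\circ K\right).$$
Dividing by $\vol_n(K)/\binom{2n}{n}$ and using the covariogram representation \eqref{e:mu_average} of $\nu_\lambda(K)$ gives $\binom{2n}{n}\nu_\lambda(K) \leq \nu(n\vol_n(K)\Pi^\circ K)$.

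To introduce the maximum on the left, I repeat the argument with $-K$ in place of $K$: one has $D(-K)=DK$, $\vol_n(-K)=\vol_n(K)$, and $\Pi(-K)=\Pi K$ (since $P_{\theta^\perp}(-K) = -P_{\theta^\perp}K$ has the same $(n-1)$-volume), so the right-hand side is unchanged, while the left-hand side becomes $\binom{2n}{n}\nu_\lambda(-K)$. Taking the larger of the two inequalities yields the desired bound.

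For equality, Lemma~\ref{t:chak} forces three conditions: $z(\theta) = \rho_{DK}(\theta)$ for all $\theta \in \s^{n-1}$, the density $\varphi(r\theta)$ is independent of $r > 0$, and $f(r\theta) = g_K(r\theta)^{1/n}$ is affine on $[0,\rho_{DK}(\theta)]$. The first condition is exactly $DK = n\vol_n(K)\Pi^\circ K$ and the third is exactly item~\ref{item:aff} of Proposition~\ref{p:simp}; by that proposition both are equivalent to $K$ being a simplex. Combined with the radial-constancy requirement on $\varphi$, this gives the asserted equality characterization. The only genuinely subtle step in the whole proof is the derivative/normalization bookkeeping that makes $z(\theta)$ come out to be precisely the radial function of $n\vol_n(K)\Pi^\circ K$ and $\beta$ precisely $\vol_n(K)/\binom{2n}{n}$; everything else is a direct invocation of Lemma~\ref{t:chak} together with Propositions~\ref{covario_prop} and \ref{p:simp}.
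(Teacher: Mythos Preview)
Your proof is correct and essentially identical to the paper's: both apply Lemma~\ref{t:chak} with $q(t)=t^n$ to the $1/n$-th power of the covariogram, identify $z(\theta)=\rho_{n\vol_n(K)\Pi^\circ K}(\theta)$ via Proposition~\ref{covario_prop}, compute $\beta$ as the Beta integral, and invoke Proposition~\ref{p:simp} for the equality case. The only cosmetic difference is that the paper normalizes $f=(g_K/\vol_n(K))^{1/n}$ so that $f(0)=1$ and $\beta=\binom{2n}{n}^{-1}$, whereas you take $f=g_K^{1/n}$ and carry the factor $\vol_n(K)$ in $\beta$; the arguments are otherwise the same.
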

\begin{proof}
In Lemma~\ref{t:chak}, set 
$
f(x)=\left(\frac{g_{K}(x)}{\vol_n(K)}\right)^{\frac{1}{n}}\!\!\!\!, \mbox{      }    q(x)=x^n, \mbox{ and } L=DK.$ Then, we have that $$\beta=n\int_0^1t^{n}(1-t)^{n-1}dt={2n \choose n}^{-1},$$
and from Proposition~\ref{covario_prop},
$$z(\theta)=\rho_{\Pi^\circ K}(\theta)n\vol_n(K)=\rho_{n\vol_n(K)\Pi^\circ K}(\theta).$$
Consequently,
$$\nu_\lambda(K)=\int_{DK}\frac{g_K(x)}{\vol_n(K)}d\nu(x)\leq {2n \choose n}^{-1}\int_{\s^{n-1}}\int_0^{\rho_{n\vol_n(K)\Pi^\circ K}(\theta)}\varphi(r\theta)r^{n-1}drd\theta.$$
Thus, the claim is established, since $\Pi K=\Pi (-K)$ and $\vol_n(K)=\vol_n(-K).$ Equality conditions are inherited from the equality conditions of Lemma~\ref{t:chak} and Proposition~\ref{p:simp}, as the latter shows $\left(\frac{g_{K}(x)}{\vol_n(K)}\right)^{\frac{1}{n}}$ is an affine function along rays restricted to $DK$ if, and only if, $K$ is a simplex.
\end{proof}

% \noindent We obtain the following corollary for centered convex bodies, where a convex body $K$ is said to be $\nu$-centered if $\int_K xd\nu(x)=0.$
% %Now that we have shown a generalization of Zhang's inequality to arbitrary measures, we will now show a generalization of projection bodies.
% \begin{corollary}
%     Consider $\nu\in\Lambda_{\text{rad}}$ with density $\varphi$ and $\nu$-centered $K\in\conbod.$  Then,
% \begin{equation*}
%     {2n \choose n}\max\{\nu(K),\nu(-K)\}\leq\nu(n\vol_n(K)\Pi^\circ K),
% \end{equation*}
% with equality if, and only if, for every $\theta\in\s^{n-1},$ $\varphi(r\theta)$ is independent of $r,$ $r>0,$ and $K$ is a simplex.
% \end{corollary}
% \begin{proof}
% If a measure $\nu$ is radially increasing, then for a centered $\nu$-convex body $K$ one has $\nu(-K) \leq \nu(y-K)$ and $\nu(K)\leq \nu(y+K)$ for $y\in\R^n.$ Consequently, $\nu(-K)\leq \nu_\lambda (K)$ and $\nu(K)\leq \nu_\lambda(-K).$
% \end{proof}

\section{Projection bodies Depending on Measures}\label{super_proj}
We begin this section by recalling a definition. A convex body $L$ is a \textit{zonoid} if, and only if, \cite[Chapter 4]{gardner_book} there exists an even, finite Borel measure $\nu$ on $\s^{n-1}$ such that the support function of $L$ is given by
$$h_L(\theta)=\int_{\s^{n-1}}|\langle\theta,u\rangle| d\nu(u).$$
For $K\in\conbod$, it follows from \eqref{e:proj_support} that $\Pi K$ is a zonoid. We now introduce another zonoid associated to $K,$ the measure dependent projection body of a convex body $K$.

\begin{definition}
The projection body of $K$ with respect to $\mu\in\Lambda$ is the convex body $\Pi_\mu K$ whose support function is given by
\begin{equation}\label{e:mu_polar_suppot}
h_{\Pi_{\mu}K}(\theta)= \frac{1}{2} \int_{\partial K} |\langle \theta, n_K(x) \rangle| \phi(x) dx=\frac{1}{2}\int_{\s^{n-1}}|\langle \theta, u\rangle|dS_{\mu,K}(u),
\end{equation}
where the last representation is due to the Gauss map. If $K$ is of class $C^2_+$ with curvature function $f_K$, then
\[
h_{\Pi_{\mu}K}(\theta) = \frac{1}{2}\int_{\s^{n-1}} |\langle u,\theta \rangle| \phi(n_K^{-1}(u)) f_K(u) du.
\]
\end{definition}
\noindent Using that $|\langle u,\theta\rangle|\leq 1$ for every $\theta\in\s^{n-1},$ we note that
$h_{\Pi_\mu K}(\theta)\leq \frac{\mu(\partial K)}{2},$ which yields $\Pi_\mu K\subset  \frac{\mu(\partial K)}{2}B_2^n.$

Next we would like to  highlight how these measure-dependent projection bodies differ from what has been studied before. We start with observing the behaviour of $\Pi_\mu K$ with respect to a linear transformation $T\in GL_n$.  We shall use the fact that, denoting $T^t$ to be the transpose of $T$, $T^{-1}$ to be the inverse of $T$, and $T^{-t}$ to be the inverse-transpose of $T$ allows one to obtain that $h_L(T^{-1}u)=h_{T^{-t}L}(u)$ for $L\in\conbod$. If we consider $\Pi$ and $\Pi_\mu$ as operators on $\conbod$, then it is well known \cite{Sh1} that linear transformations on the pre-image of $\Pi$ correspond to linear transformations on the image of $\Pi,$ that is $\Pi (TK) = |\det T| T^{-t}\left(\Pi K\right)$ for $K\in\conbod_0$ and $T\in GL_n$. In contrast, we will see that the operator $\Pi_\mu$ itself is changed by a linear transformation. 
\begin{proposition}
    Let $K\in\conbod_0$. Then, for every $T\in GL_n$, we have
    $$\Pi_\mu TK = |\det T| T^{-t}\left(\Pi_{\mu^T} K\right)$$
    where if $\mu$ has density $\phi$, then $\mu^T$ has density $\phi\circ T.$
    \label{lin_prop}
\end{proposition}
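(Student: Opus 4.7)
The plan is to compare support functions of both sides and verify they agree, using the change of variables $x = Ty$ in the boundary integral defining $h_{\Pi_\mu TK}$. Since the paper has already recorded the identity $h_L(T^{-1}u) = h_{T^{-t}L}(u)$, it suffices to prove
\[
h_{\Pi_\mu TK}(\theta) = |\det T|\, h_{\Pi_{\mu^T} K}(T^{-1}\theta).
\]

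First I would recall how the Gauss map transforms under $T \in GL_n$: if $y \in \partial K$ is a point with well-defined outer unit normal $n_K(y)$, then $Ty \in \partial(TK)$ has outer unit normal
\[
n_{TK}(Ty) = \frac{T^{-t} n_K(y)}{|T^{-t} n_K(y)|},
\]
since $T$ sends the tangent hyperplane at $y$ to the tangent hyperplane at $Ty$, and $T^{-t}$ is the correct map on normals. Next I would use the standard area-element transformation for a linear map restricted to an $(n-1)$-dimensional subspace: the $(n-1)$-dimensional Jacobian of $T$ on the tangent plane at $y$ equals $|\det T|\,|T^{-t} n_K(y)|$. Combining these:
\[
\int_{\partial(TK)} g(x)\, d\mathcal{H}^{n-1}(x) = |\det T|\int_{\partial K} g(Ty)\, |T^{-t} n_K(y)|\, d\mathcal{H}^{n-1}(y).
\]

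Applying this with $g(x) = |\langle\theta, n_{TK}(x)\rangle|\phi(x)$ and substituting the formula for $n_{TK}(Ty)$ collapses the factor $|T^{-t}n_K(y)|$ against its reciprocal, giving
\[
h_{\Pi_\mu TK}(\theta) = \frac{|\det T|}{2}\int_{\partial K} |\langle \theta, T^{-t} n_K(y)\rangle|\, \phi(Ty)\, dy.
\]
Then the identity $\langle \theta, T^{-t} v\rangle = \langle T^{-1}\theta, v\rangle$ and the definition $\mu^T = \phi\circ T$ give
\[
h_{\Pi_\mu TK}(\theta) = |\det T| \cdot \frac{1}{2}\int_{\partial K}|\langle T^{-1}\theta, n_K(y)\rangle|\, (\phi\circ T)(y)\, dy = |\det T|\, h_{\Pi_{\mu^T} K}(T^{-1}\theta).
\]
Finally, using $h_{\Pi_{\mu^T} K}(T^{-1}\theta) = h_{T^{-t}\Pi_{\mu^T} K}(\theta)$, the right-hand side equals $h_{|\det T|\, T^{-t}\Pi_{\mu^T} K}(\theta)$, which establishes the claim by uniqueness of support functions.

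The main obstacle is the correct accounting of the surface measure transformation under $T$: the cancellation between the Jacobian factor $|T^{-t} n_K(y)|$ and the normalization in $n_{TK}(Ty)$ is what cleanly leaves a single factor of $|\det T|$. This is really the content of Cauchy's area formula applied to a linear map on a hypersurface, so I would either cite it or, for completeness, reduce it to the $C^2_+$ case (using the approximation result from \cite[Theorem 2.7.1]{Sh1} cited earlier), where one can instead compute via $dS_{\mu, TK}$ and the transformation rule for the curvature function under $T$, then pass to the limit for general $K \in \conbod_0$ by continuity of the support function in the Hausdorff metric.
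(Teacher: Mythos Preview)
Your proof is correct and takes a genuinely different route from the paper's. You work directly with the boundary integral, invoking the transformation rules for the Gauss map and the $(n-1)$-dimensional area element under $T$; the cancellation of $|T^{-t}n_K(y)|$ is exactly right and yields the identity in one stroke. The paper instead avoids any explicit boundary change of variables: it rewrites $2h_{\Pi_\mu TK}(\theta)$ as the mixed measure $\mu(TK,[-\theta,\theta])$ via \eqref{eq:mix_meas}, passes to the limit definition \eqref{eq:mixed_vol_lim}, performs the substitution $x\mapsto Tx$ in the \emph{volume} integrals (where the Jacobian is simply $|\det T|$), and then returns to the integral form to read off $|\det T|\,h_{\Pi_{\mu^T}K}(T^{-1}\theta)$. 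Your approach is more self-contained and elementary, requiring only the hypersurface area formula rather than the mixed-measure machinery; the paper's approach trades that geometric input for the citation to \cite[Lemma 3.3]{GAL}, keeping the computation at the level of full-dimensional integrals throughout. Either way the approximation remark at the end of your proposal is unnecessary: the boundary change-of-variables you use holds for arbitrary convex bodies, not just $C^2_+$ ones.
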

\begin{proof}
Begin by writing the support function of $\Pi_\mu TK$ explicitly:
$$
h_{\Pi_{\mu} T K}(\theta) =\frac{1}{2} \int_{\s^{n-1}}|\langle u,\theta\rangle| dS_{\mu,TK}(u)=\frac{1}{2} \int_{\s^{n-1}} h_{[-\theta, \theta]}(u) dS_{\mu,TK}(u)$$
where, in the second line, we used the fact that $|\langle u,\theta\rangle|$ is the support function of the line segment connecting $-\theta$ to $\theta$, denoted here as $[-\theta,\theta]$. Next, we can use the surjectivity of $T$ to find $z\in\R^n$ such that $Tz=\theta$ to write
$$h_{\Pi_{\mu} T K}(\theta)=\frac{1}{2} \int_{\s^{n-1}} h_{T([-z, z])}(u) dS_{\mu,TK}(u).$$
We recall from Chapter 5 of \cite{Sh1} the concept of mixed volumes. The mixed volume of a convex body $L$ and $(n-1)$ copies of a convex body $A$ is given by
$$V(A,L)=\frac{1}{n}\int_{\s^{n-1}}h_L(u)dS_A(u),$$
or, equivalently,
$$V(A,L)=\lim_{\epsilon\to 0^{+}}\frac{\vol_n(A+\epsilon L)-\vol_n(A)}{\epsilon}.$$
The concept of mixed volume was extended (e.g. in \cite{GAL,Naz}) to measures:
in our notation, let $\nu\in\Lambda$ with density $\varphi$ and suppose $A,L\subset\R^n$ are Borel measurable sets. Then, the $\nu$-mixed-measure of $A$ and $L$ is given by
\begin{equation}
\label{eq:mixed_vol_lim}
\nu(A,L)=\liminf_{\epsilon\to 0}\frac{\nu(A+\epsilon L)-\nu(A)}{\epsilon}.
\end{equation}
In \cite[Lemma 3.3]{GAL}, it was shown that, if $A$ and $L$ are convex bodies containing the origin, then
\begin{equation} \label{eq:mix_meas}
    \nu(A,L)=\int_{\s^{n-1}}h_L(u)dS_{\nu,A}(u).
\end{equation}
From \eqref{eq:mix_meas}, we see, with $\nu=\mu$, $A=TK$, and $L=[-\theta,\theta]$ that
$$\mu(TK,[-\theta,\theta])=2h_{\Pi_\mu TK}(\theta).$$
So, using \eqref{eq:mixed_vol_lim}, we can write
$$h_{\Pi_\mu TK}(\theta)=\frac{1}{2}\liminf_{\epsilon\to 0}\frac{1}{\epsilon}\left[\int_{T(K+\epsilon[-z, z])} \phi(x) d x-\int_{T K} \phi(x) d x\right].$$
Performing a variable substitution yields
$$h_{\Pi_\mu TK}(\theta)=|\det T|\frac{1}{2}\liminf_{\epsilon\to 0}\frac{1}{\epsilon}\left[\int_{K+\epsilon[-z, z]} \phi(Tx) d x-\int_{K} \phi(Tx) d x\right].$$
Next, using \eqref{eq:mix_meas}, this time with $\nu=\mu^T$, the measure whose density is $\phi(Tx)$, yields
$$h_{\Pi_\mu TK}(\theta)=|\det T|\frac{1}{2}\int_{\s^{n-1}}h_{[-z,z]}(u)dS_{\mu^T,K}(u).$$
Inserting that $z=T^{-1}\theta$ yields $$h_{\Pi_\mu TK}(\theta)=|\det T|\frac{1}{2}\int_{\s^{n-1}}|\langle T^{-1}\theta,u\rangle|dS_{\mu^T,K}(u)=|\det T|h_{\Pi_{\mu^T} K}(T^{-1}\theta).$$
Hence we have shown, for all $\theta\in\R^n$ and $T\in GL_n$, that
\begin{equation*}
    h_{\Pi_\mu TK}(\theta)=|\det T|h_{\Pi_{\mu^T} K}(T^{-1}\theta).
\end{equation*}
We then have $h_{\Pi_\mu TK}(\theta)=|\det T|h_{T^{-t}\left(\Pi_{\mu^T} K\right)}(\theta)$ and so our claim follows.
\end{proof}

%\begin{remark}
%In future considerations, we may investigate the $L_p$ projection body of $K$ with respect to $\mu$, which, motivated by the  definition of the $L_p$ projection body $\Pi_p K$ given in \cite{LYZp} for $p \geq 1$, we define via the support function
%$$h_{\Pi_{p,\mu}K}^p(\theta)=\frac{1}{2}\int_{\s^{n-1}}|\langle u,\theta\rangle|^ph_{K}(u)^{1-p}\phi(n^{-1}_K(u))dS_K(u).$$
%\end{remark}

Next, we will show that there does not exist a direct analogue of Petty's Projection inequality for $\Pi^\circ_\mu.$ The set inclusion $DK \subseteq n\vol_n(K) \Pi^\circ K$ from \eqref{e:set_inclusion} shows that the fundamental body involved in the inequalities of Petty and Zhang is not $\Pi^\circ K$ but $n\vol_n(K) \Pi^\circ K.$ With this in mind, one can write \eqref{e:Zhang_ineq} as
$$
{2n \choose n}\leq\frac{\vol_n(n\vol_n(K)\Pi^\circ K)}{\vol_n(K)}\leq \left(\frac{n\kappa_n}{\kappa_{n-1}}\right)^n.
$$
As we will see in Theorem~\ref{t:log} and Corollary~\ref{cor:s_zhang} below, this is essentially the correct quantity to consider for $\Pi^\circ_\mu K.$ Specifically, let $\mu\in\Lambda$ and $K\in\conbod.$ Then, in those results, the quantity
$$Pe(\mu,K)=\frac{\vol_n\left(\mu(K)\Pi^\circ_\mu K\right)}{\vol_n(K)}=\frac{\mu^{n}(K)\vol_n\left(\Pi^\circ_\mu K\right)}{\vol_n(K)}$$
is bounded from below. We will now show that this quantity does not permit an analogue of Petty's inequality for arbitrary $\mu\in\Lambda,$ that is it can be made unbounded from above. Indeed, consider a symmetric, convex body $K$ and a measure $\mu$ whose density is given by $\phi(x)=e^{-\|x\|_K}.$ For ease of computation, we will consider the simple case where $K$ is of class $C^2_+,$ e.g. one could set $K = B_2^n$.  For $t>0,$ consider the function $Pe(\mu, tK).$ Notice that $\mu$ is independent of $t.$ We will show that this function is unbounded as a function in $t$. Observe that
$$\lim_{t\rightarrow \infty}\mu(tK)=\lim_{t\rightarrow \infty}\int_{tK}e^{-\|x\|_K}dx=\int_{\R^n}e^{-\|x\|_K}dx=n!\vol_n(K).$$
We also see that
$$h_{\Pi_{\mu}tK}(\theta)=\frac{1}{2} \int_{S^{n-1}}|\langle u, \theta\rangle| e^{\left.-\| n^{-1}_{tK}(u)\right.\|_{K}} d S_{t K}(u)$$
and that $dS_{tK}(u)=t^{n-1}dS_K(u).$ Now, $n_{t K}^{-1}(u) \in \partial(t K)$ implies $\left\|n_{t K}^{-1}(u)\right\|_{K}=t$ and so we have
$$h_{\Pi_\mu tK}(\theta)=t^{n-1}e^{-t}h_{\Pi K}(\theta),$$
which implies
$\Pi_\mu (tK)=t^{n-1}e^{-t}\Pi K$ and $\Pi^\circ_\mu (tK)=t^{1-n}e^{t}\Pi^\circ K$. Thus, we obtain
$$Pe(\mu,tK)=t^{n-n^2}e^{nt}\vol_n(\Pi^\circ K)\frac{\mu^n(tK)}{\vol_n(K)}.$$
 Since $\mu^n(tK)/\vol_n(K) \to \left(n!\right)^n\vol^{n-1}_n(K) >0,$ one has that $Pe(\mu,tK)$ is unbounded as $t \rightarrow \infty.$

Our next goal is to provide an inequality for the volume of $\Pi_\mu K$. First, we introduce some notation. Recall, for $\mu\in\Lambda$ with density $\phi$ and $K\in\conbod$, we identify the $\mu$-measure of $\partial K$ via
\begin{equation*}
    \mu\left(\partial K\right):= \liminf_{\epsilon\to 0}\frac{\mu(K + \epsilon B_2^n)-\mu(K)}{\epsilon}=\int_{\s^{n-1}}dS_{\mu,K}(u)=\int_{\partial K}\phi(y)dy,
\end{equation*}
where the second equality is from \eqref{eq:mix_meas} and the last equality is from the Gauss map. Using Fubini's theorem on $\int_{\s^{n-1}}h_{\Pi_\mu K}(\theta)d\theta$, we have that
\begin{equation}\label{eq:mupartialsup}
    \mu\left(\partial K\right)=\frac{1}{\kappa_{n-1}}\int_{\s^{n-1}}h_{\Pi_\mu K}(\theta)d\theta.
\end{equation}
\begin{lemma}
\label{cor:sur}
    Let $K\in\conbod$ and $\mu\in\Lambda$, such that $\mu(\partial K)\neq 0.$ Then, we have the following:
    \begin{equation*}
    \left(\frac{n\kappa_n}{\kappa_{n-1}}\right)^n\kappa_n\leq \mu^n(\partial K)\vol_n(\Pi_\mu^\circ K).
\end{equation*}
In particular, when $\mu=\lambda$, we have
$$\left(\frac{n\kappa_n}{\kappa_{n-1}}\right)^n\kappa_n\leq \vol_{n-1}^n(\partial K)\vol_n(\Pi^\circ K).$$
\end{lemma}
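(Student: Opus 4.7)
The plan is to argue via Jensen's inequality applied to the formula for the volume of a polar body. Since $\Pi_\mu^\circ K$ is the polar of the convex body $\Pi_\mu K$, formula \eqref{e:polar_volume} gives
\[
    \vol_n(\Pi_\mu^\circ K) = \frac{1}{n}\int_{\s^{n-1}} h_{\Pi_\mu K}(\theta)^{-n}\, d\theta.
\]
The hypothesis $\mu(\partial K) \neq 0$ together with the formula \eqref{e:mu_polar_suppot} ensures $h_{\Pi_\mu K}$ is strictly positive on $\s^{n-1}$ (it is the support function of a zonoid with nontrivial generating measure), so the integrand is well-defined and finite.

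Next, since the function $t \mapsto t^{-n}$ is convex on $(0,\infty)$, Jensen's inequality applied to the uniform probability measure $\frac{d\theta}{n\kappa_n}$ on $\s^{n-1}$ yields
\[
    \frac{1}{n\kappa_n}\int_{\s^{n-1}} h_{\Pi_\mu K}(\theta)^{-n}\, d\theta \;\geq\; \left(\frac{1}{n\kappa_n}\int_{\s^{n-1}} h_{\Pi_\mu K}(\theta)\, d\theta\right)^{-n}.
\]
At this point, I invoke identity \eqref{eq:mupartialsup}, which rewrites the average appearing on the right as
\[
    \frac{1}{n\kappa_n}\int_{\s^{n-1}} h_{\Pi_\mu K}(\theta)\, d\theta = \frac{\kappa_{n-1}\,\mu(\partial K)}{n\kappa_n}.
\]

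Combining the last two displays and multiplying through by $n\kappa_n/n = \kappa_n$ gives
\[
    \vol_n(\Pi_\mu^\circ K) \;\geq\; \kappa_n \left(\frac{n\kappa_n}{\kappa_{n-1}\,\mu(\partial K)}\right)^{n},
\]
which, after multiplying both sides by $\mu^n(\partial K)$, is exactly the claimed inequality. The particular case $\mu = \lambda$ follows from the identity $\lambda(\partial K) = \vol_{n-1}(\partial K)$ and the fact that $\Pi_\lambda K = \Pi K$. There is no real obstacle here; the only point that deserves a word is verifying that \eqref{e:polar_volume} applies to $\Pi_\mu K$, i.e.\ that $\Pi_\mu K \in \conbod_0$, which follows from it being a zonoid with strictly positive support function whenever $\mu(\partial K) > 0$.
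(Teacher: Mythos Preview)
Your proof is correct and follows essentially the same approach as the paper: both arguments combine the polar volume formula \eqref{e:polar_volume}, Jensen's inequality for the convex function $t\mapsto t^{-n}$ on $(0,\infty)$ with respect to the uniform probability measure on $\s^{n-1}$, and the identity \eqref{eq:mupartialsup}. Your presentation is slightly more streamlined, but the underlying argument is identical.
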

\begin{proof}
From Jensen's inequality, we have that
$$\left(\int_{\s^{n-1}} \left(h_{\Pi_{\mu} K}(\theta)\right)^{-n} \frac{d \theta}{\vol_{n-1}\left(\s^{n-1}\right)}\right)^{-1 / n} \leq \int_{S^{n-1}} h_{\Pi_{\mu} K}(\theta) \frac{d \theta}{ \vol_{n-1}\left(\s^{n-1}\right)}.$$
Hence, we have
$$
\int_{\s^{n-1}} h_{\Pi_{\mu} K}(\theta)d\theta
\geq \vol_{n-1}\left(\s^{n-1}\right)^{\frac{n+1}{n}}\left(\int_{\s^{n-1}} h_{\Pi_{\mu} K}^{-n}(\theta)d\theta\right)^{-1 / n};
$$
\eqref{eq:mupartialsup} then yields
\begin{equation*}
    \kappa_{n-1}\mu(\partial K)\geq \vol_{n-1}\left(\s^{n-1}\right)^{\frac{n+1}{n}}\left(\int_{\s^{n-1}} h_{\Pi_{\mu} K}^{-n}(\theta)d\theta\right)^{-1 / n}.
\end{equation*}
Using \eqref{e:polar_volume} yields
\begin{equation*}
    \kappa_{n-1}\mu(\partial K)\geq \left(\frac{n\vol_n(\Pi_{\mu}^\circ K)}{\vol_{n-1}\left(\s^{n-1}\right)^{n+1}}\right)^{-1 / n}.
\end{equation*}
Taking the $n$th power of both sides 
$$\frac{\vol_{n-1}\left(\s^{n-1}\right)^{n+1}}{n\kappa^n_{n-1}}\leq \mu(\partial K)^{n}\vol_n(\Pi_{\mu}^\circ K).$$
Using that $\vol_{n-1}(\s^{n-1})=n\kappa_n$ yields the result.
\end{proof}

\noindent We remark that the case when $\mu=\lambda$ of Lemma~\ref{cor:sur} was shown in \cite{GP99}. We next show a full-dimensional analogue of this result.
\begin{theorem}
Consider some $K\in\conbod$ and $\mu\in\Lambda,$  with density $\phi$, such that $\mu(\partial K)<\infty.$ Then,
\begin{equation}\label{eq:almost}
    \left(\frac{n!\kappa_n}{\kappa_{n-1}}\right)^n\kappa_n\leq \left(\int_{\R^n}|\nabla e^{-\|z\|_K}|\phi\left(\frac{z}{\|z\|_K}\right)dz\right)^n\vol_n(\Pi_{\mu}^\circ K).
\end{equation}
\end{theorem}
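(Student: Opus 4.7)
The plan is to reduce \eqref{eq:almost} directly to Lemma~\ref{cor:sur} by showing that
\[
I := \int_{\R^n}|\nabla e^{-\|z\|_K}|\phi\bigl(z/\|z\|_K\bigr)\,dz = (n-1)!\,\mu(\partial K).
\]
Once this identity is established, the desired inequality follows upon raising to the $n$-th power and applying Lemma~\ref{cor:sur}, because $((n-1)!)^{n}\bigl(n\kappa_n/\kappa_{n-1}\bigr)^{n} = \bigl(n!\kappa_n/\kappa_{n-1}\bigr)^{n}$. Throughout I would implicitly require $0\in\text{int}(K)$ so that the Minkowski functional $\|\cdot\|_K$ is a well-defined Lipschitz function on $\R^n$; this is already implicit in the statement, since otherwise the integrand is not defined.

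First, since $\|\cdot\|_K$ is Lipschitz, Rademacher's theorem (Lemma~\ref{l:rade}) guarantees a.e.\ differentiability, and the chain rule yields $|\nabla e^{-\|z\|_K}| = e^{-\|z\|_K}|\nabla\|z\|_K|$ almost everywhere. Next I would apply the coarea formula with $f(z) = \|z\|_K$ and weight $h(z) = e^{-\|z\|_K}\phi(z/\|z\|_K)$. The level set at height $s > 0$ is $f^{-1}(s) = s\partial K$, and $e^{-\|z\|_K}$ is constant on it, hence
\[
I = \int_0^\infty e^{-s}\int_{s\partial K}\phi(z/s)\,d\mathcal{H}^{n-1}(z)\,ds.
\]
The substitution $z = sy$ with $y\in\partial K$ (so $d\mathcal{H}^{n-1}(z) = s^{n-1}\,dy$) reduces the inner integral to $s^{n-1}\mu(\partial K)$, giving
\[
I = \mu(\partial K)\int_0^\infty s^{n-1}e^{-s}\,ds = \Gamma(n)\,\mu(\partial K) = (n-1)!\,\mu(\partial K).
\]

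Finally, raising this identity to the $n$-th power and invoking Lemma~\ref{cor:sur},
\[
I^{n}\vol_n(\Pi_\mu^\circ K) = ((n-1)!)^{n}\mu^{n}(\partial K)\vol_n(\Pi_\mu^\circ K) \;\geq\; ((n-1)!)^{n}\left(\frac{n\kappa_n}{\kappa_{n-1}}\right)^{n}\kappa_n = \left(\frac{n!\kappa_n}{\kappa_{n-1}}\right)^{n}\kappa_n,
\]
which is exactly \eqref{eq:almost}. The main conceptual point is arranging things so that the coarea formula applies cleanly; the one technical nuisance is checking a.e.\ differentiability of $\|\cdot\|_K$ and measurability of the weight, which are automatic under the standing assumption $\phi\in L^{1}_{\text{loc}}(\R^n)$ together with $\mu(\partial K)<\infty$. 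An alternative approach through polar coordinates — using that the radial directional derivative of $\|r\theta\|_K$ equals $\rho_K(\theta)^{-1}$ — gives the same computation, but handling $|\nabla\|z\|_K|$ pointwise requires more regularity on $\partial K$; the coarea route circumvents this by integrating only over level sets.
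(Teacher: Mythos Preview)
Your proof is correct and follows essentially the same route as the paper: both establish the identity $\int_{\R^n}|\nabla e^{-\|z\|_K}|\,\phi(z/\|z\|_K)\,dz = \Gamma(n)\,\mu(\partial K)$ and then multiply Lemma~\ref{cor:sur} through by $((n-1)!)^n$. The only cosmetic difference is that you invoke the coarea formula for $\|\cdot\|_K$, whereas the paper carries out the same computation by writing $z=ty$ with $y\in\partial K$ and using $|\nabla\|z\|_K|\,dz = t^{n-1}\,dy\,dt$; these are the same change of variables, and indeed the ``alternative approach through polar coordinates'' you mention at the end is precisely what the paper does.
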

\begin{proof}
Let $\phi$ be the density of $\mu$. From the definition of $\mu(\partial K)$ and $\Gamma(n)$, we may write
\begin{align*}
\Gamma(n)\mu(\partial K)&=\int_0^\infty\int_{\partial K}\phi(y)e^{-t}t^{n-1}dydt=\int_0^\infty\int_{\partial K}\phi\left(\frac{yt}{\|yt\|_K}\right)e^{-\|ty\|_K}t^{n-1}dydt
\\
&=\int_{\R^n}e^{-\|z\|_K}|\nabla \|z\|_K|\phi\left(\frac{z}{\|z\|_K}\right)dz,
\end{align*}
where the first equality follows from the fact that $\|y\|_K=1$ for $y\in\partial K$ and the second equality follows from, if $z\in \R^n$ has the polar decomposition $ty$, t$\in\R^+, y\in\partial K,$ then $|\nabla \|z\|_K|dz=t^{n-1}dydt.$
Notice that $|\nabla \|z\|_K|e^{-\|z\|_K}=|\nabla e^{-\|z\|_K}|.$
Thus, $$\Gamma(n)\mu(\partial K)=\int_{\R^n}|\nabla e^{-\|z\|_K}|\phi\left(\frac{z}{\|z\|_K}\right)dz,$$
which, by multiplying both sides of Lemma~\ref{cor:sur} by $\Gamma(n)^n=((n-1)!)^n,$ yields the result.
\end{proof}

\section{Lemmas Concerning The Intersection of a Convex Body with its Translates}
\label{sec:lem}
In this section, we will state some lemmas concerning $K\in\conbod$ and $K\cap (K+x)$, where $x\in DK$. These lemmas will be crucial for Section~\ref{sec:res}. We start our investigation by considering the convex body $\Pi_{\mu}K-\eta_{\mu,K}$ whose support function is given by:
\begin{equation*}
    h_{\Pi_{\mu}K-\eta_{\mu,K}}(\theta)=h_{\Pi_\mu K}(\theta)-\langle\eta_{\mu, K},\theta\rangle=\frac{1}{2}\int_{\partial K}|\langle\theta,n_K(y) \rangle| \phi(y) dy-\frac{1}{2}\int_K \langle\nabla\phi(y),\theta \rangle dy,
\end{equation*}
where $h_{\Pi_\mu K}(\theta)$ is given via \eqref{e:mu_polar_suppot} and the vector $\eta_{\mu,K}$ is defined as
\begin{equation}
     \eta_{\mu,K}=\frac{1}{2}\int_K \nabla \phi(y) d y,
     \label{e:eta}
 \end{equation}
or, by using the Gauss-Green-Ostrogradsky theorem,
\begin{equation*}
    \eta_{\mu,K}=\frac{1}{2}\int_{\partial K}n_K(x)\phi(x)dx.
\end{equation*}
We say $K$ is \textbf{$\mu$-projective} if $\eta_{\mu,K}=0$, in which case $h_{\Pi_{\mu}K-\eta_{\mu,K}}(\theta)=h_{\Pi_\mu K}(\theta)$. 

\begin{example}
\label{even_proj}
If $\lambda$ is the Lebesgue measure, then every $K\in\conbod$ is $\lambda$-projective. Furthermore, if $\mu$ is an even measure, then every symmetric $K\in\conbod$ is $\mu$-projective.
\end{example}
\begin{example}
Consider the case when $d\mu(x)=d\gamma_n(x)$ and $K$ such that $\int_K ye^{-|y|^2/2}dy=0,$ i.e. $K$ has a center of mass at the origin with respect to $\gamma_n.$ Then, notice that $\nabla e^{-|y|^2/2}=-ye^{-|y|^2/2}$ and so
 $$\int_K\langle \nabla e^{-|y|^2/2}, \theta\rangle dy=-\int_K\langle y, \theta\rangle e^{-|y|^2/2}dy=-\bigg\langle \int_K ye^{-|y|^2/2}dy,\theta \bigg\rangle=0,$$
 and hence we have that such $K$ is $\gamma_n$-projective and
 $$h_{\Pi_{\gamma_n} K-\eta_{\gamma_n,K}}(\theta)=h_{\Pi_{\gamma_n} K}(\theta)=\frac{(2\pi)^{-n/2}}{2}\int_{\partial K}\left|\langle\theta, {n}_{K}(y)\right\rangle| e^{-|x|^2/2} d y.$$
\end{example}

We are now able to state the following classification lemma, yielding an alternative representation of $h_{\Pi_\mu K}$.
\begin{lemma}\label{l:class}
For $K\in\conbod$ and $\mu\in\Lambda$ with density $\phi$, one has
\begin{equation*}
    \int_{\{z\in\partial K:\langle n_K(z),\theta\rangle \geq 0\}} |\langle n_K(z),\theta\rangle| \phi(z)dz=\left\langle\theta, \eta_{\mu,K}\right\rangle+h_{\Pi_{\mu} K}(\theta).
\end{equation*}
\end{lemma}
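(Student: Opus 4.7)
The plan is to verify the identity by splitting the boundary of $K$ according to the sign of $\langle n_K(z),\theta\rangle$ and combining the two defining integrals directly. No tools beyond the definitions \eqref{e:mu_polar_suppot} and \eqref{e:eta} seem to be needed; the lemma is essentially the algebraic identity $|a| = a + 2\max\{-a,0\}$ (or equivalently $|a|+a=2\max\{a,0\}$) integrated against $\phi$ over $\partial K$.

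First I would introduce the sets
\[
\partial K^{+}(\theta)=\{z\in\partial K:\langle n_K(z),\theta\rangle\geq 0\},\qquad \partial K^{-}(\theta)=\{z\in\partial K:\langle n_K(z),\theta\rangle< 0\}.
\]
These partition $\partial K$ (up to the measure zero set where $n_K$ is not well defined, which we are already ignoring by convention). Using this partition, I would rewrite the definition of $h_{\Pi_\mu K}$ as
\[
h_{\Pi_\mu K}(\theta)=\frac{1}{2}\int_{\partial K^+(\theta)}\langle n_K(z),\theta\rangle\phi(z)\,dz-\frac{1}{2}\int_{\partial K^-(\theta)}\langle n_K(z),\theta\rangle\phi(z)\,dz,
\]
since $|\langle n_K(z),\theta\rangle|$ equals $\langle n_K(z),\theta\rangle$ on $\partial K^+(\theta)$ and $-\langle n_K(z),\theta\rangle$ on $\partial K^-(\theta)$.

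Second, I would use the boundary representation of $\eta_{\mu,K}$ obtained from the Gauss--Green--Ostrogradsky theorem, namely $\eta_{\mu,K}=\frac{1}{2}\int_{\partial K}n_K(x)\phi(x)\,dx$, so that
\[
\langle\theta,\eta_{\mu,K}\rangle=\frac{1}{2}\int_{\partial K^+(\theta)}\langle n_K(z),\theta\rangle\phi(z)\,dz+\frac{1}{2}\int_{\partial K^-(\theta)}\langle n_K(z),\theta\rangle\phi(z)\,dz.
\]
Adding the two displayed expressions the $\partial K^-(\theta)$ contributions cancel and the $\partial K^+(\theta)$ contributions double, yielding
\[
\langle\theta,\eta_{\mu,K}\rangle+h_{\Pi_\mu K}(\theta)=\int_{\partial K^+(\theta)}\langle n_K(z),\theta\rangle\phi(z)\,dz=\int_{\partial K^+(\theta)}|\langle n_K(z),\theta\rangle|\phi(z)\,dz,
\]
which is exactly the claimed identity.

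There is no real obstacle here; the only subtle point worth mentioning is that $n_K$ is defined almost everywhere on $\partial K$, so $\partial K^+(\theta)\cup\partial K^-(\theta)$ agrees with $\partial K$ up to an $\mathcal H^{n-1}$-null set and can be used under the integrals without issue. The passage between the volume and surface representations of $\eta_{\mu,K}$ is already recorded in the paper just after \eqref{e:eta}, so I would simply invoke it.
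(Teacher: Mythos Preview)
Your proof is correct and essentially identical to the paper's: it too splits $\partial K$ according to the sign of $\langle n_K(z),\theta\rangle$, writes $2h_{\Pi_\mu K}(\theta)$ as the sum and $2\langle\theta,\eta_{\mu,K}\rangle$ as the difference of the two pieces, and adds to isolate the integral over the positive part.
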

\begin{proof}
Begin by writing
$$A_1(\theta)=\int_{\{z\in\partial K:\langle n_K(z),\theta\rangle \geq 0\}} |\langle n_K(z),\theta\rangle| \phi(z)dz \quad \text{and} \quad A_2(\theta)=\int_{\{z\in\partial K:\langle n_K(z),\theta\rangle < 0\}} |\langle n_K(z),\theta\rangle| \phi(z)dz.$$
Then, using \eqref{e:mu_polar_suppot} shows that
$$
A_1(\theta)+A_2(\theta)=2 h_{\Pi_{\mu} K}(\theta) \quad \text{and} \quad A_1(\theta)-A_2(\theta)=2\left\langle\theta, \eta_{\mu,K}\right\rangle.
$$
Adding the two equations together and solving for $A_1(\theta)$ yields the result.
\end{proof}

The next lemma concerns the behaviour of $\partial K\cap (K+r\theta)$ as $r\to 0^+$. We recall that for $y\in \partial K,$ $n_K(y)$ may not be unique, thus  we set $\mathbf{n}_K(y)=\{u\in\s^{n-1}:u \text{ is an outer unit-normal of } \partial K \text{ at } y \}$. 
\begin{lemma}
\label{l:set}
Let $K\in\conbod$, $r>0$ and $\theta\in \s^{n-1}$. Then, 
$$\partial K\cap (K+r\theta) \subset \{y\in\partial K: \langle n_K(y),\theta\rangle\geq 0, \; \forall \; n_K(y)\in\mathbf{n}_K(y)\}.
$$  Moreover, $\partial K\cap (K+r\theta)$ converges to $\{y\in\partial K: \langle n_K(y),\theta\rangle\geq 0, \; \forall \; n_K(y)\in\mathbf{n}_K(y)\}$ as $r\to0^+$ with respect to set inclusion.
\end{lemma}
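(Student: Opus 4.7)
My plan is to break the lemma into three claims: (i) the stated inclusion, (ii) monotonicity of $A_r:=\partial K\cap(K+r\theta)$ as a function of $r>0$, and (iii) convergence of $A_r$ to $A_0:=\{y\in\partial K:\langle n,\theta\rangle\geq 0\text{ for every }n\in\mathbf{n}_K(y)\}$ as $r\to 0^+$. The common tool is the supporting hyperplane characterization: $n\in\mathbf{n}_K(y)$ iff $\langle x-y,n\rangle\leq 0$ for every $x\in K$.

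For (i), take $y\in A_r$, so that $y-r\theta\in K$; plugging $x=y-r\theta$ into the supporting hyperplane inequality gives $-r\langle\theta,n\rangle\leq 0$ for every $n\in\mathbf{n}_K(y)$, which rearranges to $\langle n,\theta\rangle\geq 0$. For (ii), if $0<s<r$ and $y\in A_r$, the identity $y-s\theta=\tfrac{s}{r}(y-r\theta)+(1-\tfrac{s}{r})y$ represents $y-s\theta$ as a convex combination of two points of $K$, hence $y-s\theta\in K$; therefore $A_r\subset A_s$. Combining (i) and (ii), the family $\{A_r\}_{r>0}$ is nondecreasing as $r\downarrow 0$ with union contained in $A_0$.

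For (iii) I need the reverse inclusion in the limit. The key case is when $y\in A_0$ satisfies the strict condition $\langle n,\theta\rangle>0$ for every $n\in\mathbf{n}_K(y)$: duality of the normal and tangent cones then places $-\theta$ in the interior of the tangent cone of $K$ at $y$, so $y-\theta/\lambda\in K$ for some $\lambda>0$, putting $y\in A_{1/\lambda}$ and hence, by (ii), in $A_r$ for all $r\in(0,1/\lambda]$. The remaining ``exceptional'' points of $A_0$, namely those $y$ for which some $n\in\mathbf{n}_K(y)$ is perpendicular to $\theta$, are approximated by nearby boundary points satisfying the strict condition, so they lie in the closure of $\bigcup_{r>0}A_r$; this is the precise sense in which $A_r\to A_0$ with respect to set inclusion.

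The main obstacle is the clean handling of the exceptional set in (iii), where the membership $y\in A_r$ can genuinely fail for every $r>0$ (as in the circle example $K=B_2^2$, $y=(1,0)$, $\theta=(0,1)$). My strategy is to note that this set is contained in the union of the $\mathcal{H}^{n-1}$-null set of non-smooth points of $\partial K$ together with the set of smooth points $y$ with $n_K(y)\in\theta^\perp\cap\s^{n-1}$; since the Gauss image of the latter is confined to a great $(n-2)$-sphere, the exceptional set has vanishing $(n-1)$-dimensional Hausdorff measure, which is exactly the form of convergence required for the integration arguments in Section~\ref{sec:res}.
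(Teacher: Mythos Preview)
Your parts (i) and (ii) are correct and match the paper's argument; your explicit convex-combination proof of monotonicity is in fact cleaner than the paper's bare assertion.

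On (iii) you are more careful than the paper. The paper argues by contradiction that $\bigcup_{r>0}A_r=A_0$ literally: it takes $y\in A_0\setminus\bigcup_{r>0}A_r$, notes $\langle y-r\theta,n_K(y)\rangle\le\langle y,n_K(y)\rangle$, and then asserts ``from convexity, $y-r\theta\in K$.'' That implication runs the supporting-hyperplane inequality backwards and is false; your disk example $K=B_2^2$, $y=(1,0)$, $\theta=(0,1)$ shows the literal equality $\bigcup_{r>0}A_r=A_0$ fails. Your treatment of the strict case via normal/tangent cone duality is the right replacement, and your reading of ``convergence with respect to set inclusion'' as monotone convergence up to an $\mathcal{H}^{n-1}$-null set is exactly what Lemma~\ref{l:squeeze} needs, since there the lemma is invoked only to show $\mathcal{H}^{n-1}(A_0\setminus A_r)\to 0$.

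There is, however, a gap in your null-set argument. You bound the exceptional set by (non-smooth points) $\cup$ \{smooth $y$ with $n_K(y)\in\theta^\perp$\} and claim the latter is $\mathcal{H}^{n-1}$-null because its Gauss image lies in a great $(n-2)$-sphere. This fails: for a cylinder $K=B_2^{n-1}\times[0,1]$ with $\theta$ along the axis, the entire lateral boundary has $n_K(y)\in\theta^\perp$ yet has positive $\mathcal{H}^{n-1}$-measure. In that example none of those points are actually exceptional (each satisfies $y-r\theta\in K$ for small $r$), so your \emph{conclusion} may survive, but the \emph{bound} you use is too crude. A workable route is to observe that every exceptional point $y$ has $(y+\R\theta)\cap K$ either equal to $\{y\}$ or a nondegenerate segment in $\partial K$ with $y$ as its lower endpoint; in either case $y$ lies over $\partial(P_{\theta^\perp}K)$ with at most one exceptional point per fiber, which lets you control the $\mathcal{H}^{n-1}$-measure via the $(n-2)$-dimensional set $\partial(P_{\theta^\perp}K)$ rather than via the Gauss image.
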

\begin{proof}
  Consider $\theta\in\s^{n-1}$ and fix  $r_0>0$ such that $r_0\theta \in DK$; then $\partial K\cap (K+r_0\theta)\neq \emptyset.$ Our claim follows if we can show that $y\in\partial K\cap (K+r\theta),$ for all $r\in (0,r_0]$ if, and only if, $\langle \theta, n_{K}(y) \rangle\geq 0.$ Begin by fixing $r\in (0,r_0]$. Now, if $y\in K+r\theta,$ then $y-r\theta\in K.$ But, if $y\in\partial K$, we must then have $\langle x, n_{K}(y) \rangle \leq \langle y, n_{K}(y) \rangle$ for all $x\in K$ and $n_K(y)\in\mathbf{n}_K(y)$; and, in particular, for $x=y-r\theta.$ This implies $0\leq \langle r\theta, n_{K}(y) \rangle$, and since $r>0$ we have $0\leq \langle \theta, n_{K}(y) \rangle.$ Thus, we have shown that $\partial K\cap (K+r\theta)\subset \{y\in\partial K: \langle n_K(y),\theta\rangle\geq 0\}$ for all $r$. Additionally, we note that $\partial K\cap (K+r\theta)$ is monotonically increasing with respect to set inclusion as $r$ decreases.
  
  Now, suppose $\partial K\cap (K+r\theta)$ increases to a proper subset of $\{y\in\partial K: \langle n_K(y),\theta\rangle\geq 0\}.$ Then, take $y$ in $\{y\in\partial K: \langle n_K(y),\theta\rangle\geq 0\}\setminus\cup_{r>0}[\partial K\cap (K+r\theta)].$ We have that $\langle n_K(y),\theta\rangle\geq 0$; which means $\langle y-r\theta, n_{K}(y) \rangle \leq \langle y, n_{K}(y) \rangle$ for all $0<r<r_0$. But from convexity, we then have $y-r\theta\in K$. Hence, $y\in K+r\theta$, which means $y\in\partial K\cap (K+r\theta)$, a contradiction. Thus $\partial K\cap (K+r\theta)\nearrow \{y\in\partial K: \langle n_K(y),\theta\rangle\geq 0\}$.
\end{proof}
We conclude this section by stating a lemma which shows what happens to the integral of a Lipschitz function over $(K+r\theta)\setminus K$ as $r\to 0^+.$
\begin{lemma}
\label{l:squeeze}
Fix some $K\in\conbod$ and $\theta\in\s^{n-1}$. Let $\Psi$ be a non-negative function which is Lipschitz on a domain $\Omega$ containing $\cup_{r\in [0,r_0]} (K+r\theta)$ for some small $r_0>0$. Furthermore, suppose $q:\R^+\to \R^+$ is a continuous function such that $\lim_{r\to 0^+} q(r) = 0$ and $|q(r)|\leq r.$ Then:
\begin{equation}
\label{eq:lip_int}
    \lim_{r\to 0^+}\frac{1}{r}\int_{(K+r\theta)\setminus K}\Psi(z+q(r)\theta)dz=\int_{\{y\in\partial K:\langle n_K(y),\theta\rangle \geq 0\}} |\langle n_K(y),\theta\rangle| \Psi(y)dy.
\end{equation}
\end{lemma}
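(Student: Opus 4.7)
The plan is to first handle the unperturbed integral $\frac{1}{r}\int_{(K+r\theta)\setminus K}\Psi(z)\,dz$ via Fubini slicing in the $\theta$-direction, and then to absorb the perturbation $q(r)\theta$ using the Lipschitz hypothesis on $\Psi$.

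By rotating coordinates I may assume $\theta=e_n$. Set $P=P_{e_n^\perp}K$; for $y'\in P$ the intersection of $K$ with the vertical line through $y'$ is the interval $[a(y'),b(y')]$ (by convexity), so the intersection of $K+re_n$ with that line is $[a(y')+r,b(y')+r]$. Fubini then yields
\begin{equation*}
\int_{(K+re_n)\setminus K}\Psi(z)\,dz=\int_P\int_{I_r(y')}\Psi(y'+te_n)\,dt\,dy',
\end{equation*}
where $I_r(y')=[a(y')+r,b(y')+r]\setminus[a(y'),b(y')]$. On $P_r=\{y'\in P:b(y')-a(y')\geq r\}$ this set is the interval $(b(y'),b(y')+r]$ of length exactly $r$, while on $P\setminus P_r$ it has length at most $b(y')-a(y')<r$. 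Since $K$ has non-empty interior, the shadow boundary $\{y'\in P:b(y')=a(y')\}$ has $(n-1)$-measure zero, so $\vol_{n-1}(P\setminus P_r)\to 0$, and the $P\setminus P_r$ piece contributes $o(r)$ using the boundedness of $\Psi$ on the compact set $\bigcup_{0\leq r\leq r_0}(K+r\theta)$. On $P_r$, continuity of $\Psi$ and dominated convergence give
\begin{equation*}
\lim_{r\to 0^+}\frac{1}{r}\int_{(K+re_n)\setminus K}\Psi(z)\,dz=\int_P\Psi(y'+b(y')e_n)\,dy'.
\end{equation*}

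The ``upper face'' $\{y\in\partial K:\langle n_K(y),e_n\rangle>0\}$ is the graph of the concave (hence locally Lipschitz) function $b:\mathrm{int}(P)\to\R$, so the area formula for Lipschitz graphs applies and gives $dy'=\langle n_K(y),e_n\rangle\,d\mathcal{H}^{n-1}(y)$ under the correspondence $y=y'+b(y')e_n$; this converts the preceding integral into the surface integral on the right-hand side of \eqref{eq:lip_int} (extending to $\langle n_K(y),\theta\rangle\geq 0$ only adds an $\mathcal{H}^{n-1}$-null set on which the integrand vanishes). Finally, for the perturbation: the condition $q(r)\leq r$ together with continuity of $q$ guarantee that $z+q(r)\theta\in\Omega$ for $z\in K+r\theta$ and all sufficiently small $r$. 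Lipschitz continuity of $\Psi$ on $\Omega$ then gives $|\Psi(z+q(r)\theta)-\Psi(z)|\leq\|\Psi\|_{\mathrm{Lip}(\Omega)}\,r$, and combined with the bound $\vol_n((K+r\theta)\setminus K)=O(r)$ from the Fubini computation, the two integrals differ by $O(r^2)$, which vanishes after dividing by $r$.

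The main obstacle I expect is the clean handling of the shadow boundary and the justification of the area formula for the graph of the concave function $b$; once those measure-theoretic issues are set up (they are controlled by the fact that $K$ has non-empty interior and that the Gauss map exists $\mathcal{H}^{n-1}$-a.e.\ on $\partial K$, as recorded in Section~\ref{preliminaries}), the rest of the argument is a routine combination of Fubini, dominated convergence, and the Lipschitz estimate.
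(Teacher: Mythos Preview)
Your argument is correct. The Fubini slicing in the $\theta$-direction, the splitting into $P_r$ and $P\setminus P_r$, the area-formula conversion from the projection integral to the surface integral, and the Lipschitz absorption of the $q(r)\theta$ shift all go through as you describe. The measure-theoretic issues you flag (shadow boundary in $\partial P$, a.e.\ differentiability of the concave function $b$) are genuine but standard.

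The paper takes a genuinely different route. Rather than slicing by hyperplanes orthogonal to $\theta$, it sandwiches $(K+r\theta)\setminus K$ between two ``tubular'' sets built over pieces of $\partial K$: an outer set $\Omega_1(r,\theta)=\{y+t\theta:y\in\partial K,\ \langle n_K(y),\theta\rangle\geq 0,\ t\in(0,r]\}$ and an inner set $\Omega_2(r,\theta)=\{y+t\theta:y\in\partial K\cap(K+r\theta),\ t\in(0,r]\}$. It then parametrizes these sets by $(y,t)\in\partial K\times(0,r]$ with Jacobian $|\langle n_K(y),\theta\rangle|$, producing upper and lower bounds $I_1,I_2$ that are squeezed together using the earlier Lemma~\ref{l:set} (that $\partial K\cap(K+r\theta)$ increases to $\{\langle n_K(y),\theta\rangle\geq 0\}$). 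What your approach buys is that it avoids Lemma~\ref{l:set} entirely and is arguably more elementary, relying only on Fubini and the area formula for concave graphs; what the paper's approach buys is that the surface integral on the right of \eqref{eq:lip_int} appears immediately from the parametrization, with no separate graph-to-surface conversion step, and the argument is coordinate-free. Your treatment of the $q(r)$ perturbation is essentially the same as the paper's.
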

\begin{proof}
We shall show our claim by bounding the quantity $$\frac{1}{r}\int_{(K+r\theta)\setminus K}\Psi(z+q(r)\theta)dz$$ above and below by functions that are continuous and equal the right-hand side of \eqref{eq:lip_int} as $r\to 0^+.$ Let us define two sets approximating $(K+r\theta)\setminus K$:
$$
\Omega_2(r, \theta) = \{y+t\theta: y \in\partial K\cap (K+r\theta), t\in (0,r]\} \mbox{ and }  \Omega_1(r, \theta) =\{y+t\theta: y\in\partial K,\langle n_K(y),\theta\rangle\geq 0, t\in (0,r]\}.
$$
We first show that
$$ \Omega_2(r, \theta) \setminus \partial K \subset (K+r\theta)\setminus K\subset \Omega_1(r, \theta).$$
The first inclusion is merely convexity. Indeed, if $K\cap (K+r\theta)=\emptyset,$ then the inclusion is trivial. Otherwise, consider $y \in \partial K\cap (K+r\theta)$. Since $y\in \partial K$ we get  $y+r\theta \in K+r\theta$; moreover, since $y \in K+r\theta$, we get, by convexity, that  $[y, y+r\theta] \subset K+r\theta$. Now let us show that $[y, y+r\theta]\cap \text{int} (K) =\emptyset.$ Suppose this assertion fails. Then there exists  $t_0>0$ such that $y+t_0\theta \in \text{int}(K).$ Thus 
 $\langle y+t_0\theta,n_K(y)\rangle\ < \langle y,n_K(y)\rangle.$ Using that $t_0>0$, we obtain $\langle n_K(y),\theta\rangle<0$. But, Lemma~\ref{l:set} reveals that $\partial K\cap (K+r\theta)\subset \{y\in\partial K: \langle n_K(y),\theta\rangle\geq 0\}$. Consequently, $\langle n_K(y),\theta\rangle<0$ implies $y\notin \partial K\cap (K+r\theta),$ a contradiction.

For the second inclusion, take $z\in (K+r\theta)\setminus K$. Then, there exists $x\in K$ such that $z=x+r\theta$. Now, consider the segment $[x,x+r\theta]$. We have that $x\in K$, but $x+r\theta \notin K$. Hence, there exists $y\in\partial K$ such that $y\in [x,x+r\theta]$. Furthermore, we can write $z=x+r\theta=y+t\theta$, where $t\in (0,r].$ We thus have that $\langle n_K(y),\theta\rangle\geq 0$, since we can write $x=y-(r-t)\theta$ and use that $\langle x, n_K(y)\rangle \leq \langle y, n_K(y)\rangle.$ Hence, we have the desired inclusions.

We note that $\Omega_2(r, \theta)\setminus\partial K$  differs from $\Omega_2(r, \theta)$ by a set of measure zero. Thus,
$$
\int_{\Omega_2(r, \theta)}\Psi(z+q(r)\theta)dz\leq\int_{(K+r\theta)\setminus K}\Psi(z+q(r)\theta)dz\leq \int_{\Omega_1(r, \theta)}\Psi(z+q(r)\theta)dz.
$$
Next, consider the coordinate transformation $z=(y,t)$, where $y\in\partial K$ and $t\in (0,r],$ and $z=y+t\theta.$ The Jacobian of this transformation is precisely the magnitude of the cosine of the angle between $n_K(y)$ and $\theta$, that is
$|\langle n_K(y),\theta\rangle|.$ Combining the above set inclusions and this coordinate transformation, we have
$$I_2(r,\theta)\leq\int_{(K+r\theta)\setminus K}\Psi(z+q(r)\theta)dz\leq I_1(r,\theta),$$
where $I_1(r,\theta)$ and $I_2(r,\theta)$ are continuous functions (in $r$) given by
\begin{align*}
    I_2(r,\theta)&=\int_{\partial K\cap (K+r\theta)}\int_{0}^r|\langle n_K(y),\theta\rangle|\Psi(y+t\theta+q(r)\theta)dt dy,
    \\
    I_1(r,\theta)&=\int_{\{y\in\partial K: \langle n_K(y),\theta\rangle\geq 0\}}\int_{0}^r|\langle n_K(y),\theta\rangle|\Psi(y+t\theta+q(r)\theta)dt dy.
\end{align*}
 Since $\Psi$ is Lipschitz, it is bounded on $K$, and so we can use dominated convergence and the Lebesgue differentiation theorem \cite{Stein}, to obtain
\begin{align*}
\lim_{r\to 0^+}\frac{1}{r}I_1(r,\theta)&=\int_{\{y\in\partial K: \langle n_K(y),\theta\rangle\geq 0\}}\lim_{r\to 0^+}\frac{1}{r}\left(\int_{0}^r|\langle n_K(y),\theta\rangle|\Psi(y+t\theta+q(r)\theta)dt\right)dy
\\
&=\int_{\{y\in\partial K: \langle n_K(y),\theta\rangle\geq 0\}}\lim_{r\to 0^+}\frac{1}{r}\left(\int_{0}^r|\langle n_K(y),\theta\rangle|\Psi(y+t\theta)dt\right)dy
\\
&=\int_{\{y\in\partial K:\langle n_K(y),\theta\rangle \geq 0\}} |\langle n_K(y),\theta\rangle| \Psi(y)dy,
\end{align*}
where we used the fact that $\frac{1}{r}\int_{0}^r|\Psi(y+t\theta)-\Psi(y+t\theta+q(r)\theta)|dy\leq \|\Psi\|_{\text{Lip}(\Omega)} |q(r)|$ goes to zero as $r\to0^+$ by hypothesis when going from the first to second line. Next, we expand the set being considered for $I_2(r,\theta)$ by adding and subtracting an integration over $\{y\in\partial K: \langle n_K(y),\theta\rangle\geq 0\}\setminus (K+r\theta)$ to obtain:
\begin{align*}
I_2(r,\theta)=I_1(r,\theta)-\int_{\{y\in\partial K: \langle n_K(y),\theta\rangle\geq 0\}\setminus (K+r\theta)}\int_{0}^r|\langle n_K(y),\theta\rangle|\Psi(y+t\theta+q(r)\theta)dt dy=I_1(r,\theta)-I_3(r,\theta).
\end{align*}
Thus, we see that
$$\lim_{r\to 0^+}\frac{1}{r}I_2(r,\theta)=\lim_{r\to 0^+}\frac{1}{r}I_1(r,\theta)-\lim_{r\to 0^+}\frac{1}{r}I_3(r,\theta).$$
We now show that $$\lim_{r\to 0^+}\frac{1}{r}I_3(r,\theta)=0.$$
Since $\Psi$ is Lipschitz in $K\cup (K+r\theta)$, it is bounded on $K+r\theta$, by some $L>0$; from Cauchy-Schwarz, one also has $|\langle n_K(y),\theta \rangle|\leq 1$ for all $\theta\in \s^{n-1}$; so, we compute
\begin{align*}\bigg|\frac{1}{r}I_3(r,\theta)\bigg|&\leq\int_{\{y\in\partial K: \langle n_K(y),\theta\rangle\geq 0\}\setminus (K+r\theta)}\left(\frac{1}{r}\int_0^r\big|\Psi(y+t\theta+q(r)\theta)\langle n_K(y),\theta \rangle \big|dt\right)dy 
\\
&\leq L\vol_{n-1}\left(\{y\in\partial K: \langle n_K(y),\theta\rangle\geq 0\}\setminus (K+r\theta)\right),
\end{align*}
and this goes to zero as $r\to 0^{+}$ via Lemma~\ref{l:set}.
\end{proof}

\section{Measure-Dependent Covariogram for Convex Bodies}
\label{sec:res}
\begin{definition}
Let $\mu\in\Lambda$ with density $\phi$ and consider $K\in\conbod$. The $\mu$-covariogram of $K$ is given as
\begin{equation}\label{e:mu_covario}
    g_{\mu,K}(x)=\mu(K\cap(K+x))=(\chi_K\phi \star \chi_{-K})(x).
\end{equation} \end{definition}
In \eqref{e:mu_average}, we saw the average of $\mu$ with respect to $\lambda$ was related to integrating the covariogram. In the same spirit, we can define the average of $\lambda$ with respect to $\mu$ as
$\lambda_\mu(f)=\frac{1}{\mu(K)}\int_{\R^n}g_{\mu,K}(x)dx.$ The following, however, shows this is unnecessary.
\begin{example}
\label{int_g_mu}
For $K\in\conbod$ and $\mu\in\Lambda$ with density $\phi$, we obtain:
$$\mu(K)\lambda_\mu(K)=\int_{\R^n}g_{\mu,K}(x)dx=\int_K\vol_n(y-K)\mu(y)=\vol_n(K)\mu(K),$$
since $\chi_{(K+x)}(y)=\chi_{(y-K)}(x)$ and $\vol_n(y-K)=\vol_n(K).$
\end{example}
We  define the brightness of a convex body $K$ with respect to $\mu$, or the $\mu$-brightness of K to be the derivative in the radial direction of $g_{\mu,K}(r\theta)$ evaluated at $0$. We shall now determine the $\mu$-brightness of $K$, the proof of which is inspired by the proof of the derivative of $g_K$, found in \cite{MA}.
\begin{theorem}
Let $K\in\conbod$. Suppose $\Omega$ is a domain containing $K$, and consider $\mu\in\Lambda$ with density $\phi$ locally Lipschitz on $\Omega$. Then, for every $\theta\in\s^{n-1},$ the brightness of $K$ with respect to $\mu$ is -$h_{\Pi_\mu K-\eta_{\mu,K}}(\theta)$ i.e.
\begin{equation}\label{e:deriv_g_mu_covario}
    \diff{g_{\mu,K}(r\theta)}{r}\bigg|_{r=0}=-h_{\Pi_{\mu}K-\eta_{\mu,K}}(\theta), \quad \theta\in\s^{n-1}.
    \end{equation}
\end{theorem}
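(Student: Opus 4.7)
The plan is to split the increment $g_{\mu,K}(r\theta) - g_{\mu,K}(0)$ into a ``bulk'' contribution capturing how $\phi$ varies under translation of $K$, plus a ``boundary'' contribution capturing the infinitesimal change in the integration domain. This avoids substituting directly inside $\int_{K\setminus(K+r\theta)}\phi$, which would force a sign-changing choice of $q$ in Lemma~\ref{l:squeeze}; with the split, that lemma is invoked with the trivial choice $q \equiv 0$.

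The first step is an additivity identity. Writing $K\cup(K+r\theta)$ as a disjoint union in two different ways,
$$\mu(K) + \mu\bigl((K+r\theta)\setminus K\bigr) = \mu\bigl(K\cup(K+r\theta)\bigr) = \mu(K+r\theta) + \mu\bigl(K\setminus(K+r\theta)\bigr),$$
and combining with $g_{\mu,K}(r\theta) = \mu(K) - \mu\bigl(K\setminus(K+r\theta)\bigr)$, one gets
$$g_{\mu,K}(r\theta) - g_{\mu,K}(0) = \bigl[\mu(K+r\theta) - \mu(K)\bigr] - \mu\bigl((K+r\theta)\setminus K\bigr).$$

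The second step is to compute the two resulting right-limits at $r=0$. For the bulk piece, translation invariance of Lebesgue measure gives $\mu(K+r\theta) - \mu(K) = \int_K [\phi(y+r\theta) - \phi(y)]\,dy$. The local Lipschitz hypothesis makes the difference quotient $r^{-1}[\phi(y+r\theta) - \phi(y)]$ uniformly bounded for $y \in K$ and small $r$ (pick a compact neighborhood of $K$ inside $\Omega$ on which $\phi$ is Lipschitz), while Rademacher's theorem (Lemma~\ref{l:rade}) guarantees the pointwise limit $\langle \nabla \phi(y), \theta \rangle$ exists a.e.\ on $K$. Dominated convergence and \eqref{e:eta} yield
$$\lim_{r\to 0^+} \frac{\mu(K+r\theta) - \mu(K)}{r} = \int_K \langle \nabla\phi(y),\theta\rangle\,dy = 2\langle \eta_{\mu,K}, \theta\rangle.$$
For the boundary piece, Lemma~\ref{l:squeeze} with $\Psi=\phi$ and $q\equiv 0$ gives
$$\lim_{r\to 0^+} \frac{\mu\bigl((K+r\theta)\setminus K\bigr)}{r} = \int_{\{y\in\partial K:\,\langle n_K(y),\theta\rangle \geq 0\}} |\langle n_K(y),\theta\rangle|\, \phi(y)\,dy,$$
and Lemma~\ref{l:class} identifies this last integral as $\langle \theta, \eta_{\mu,K}\rangle + h_{\Pi_\mu K}(\theta)$. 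Assembling the pieces,
$$\diff{g_{\mu,K}(r\theta)}{r}\bigg|_{r=0} = 2\langle \eta_{\mu,K}, \theta\rangle - \bigl(\langle \theta, \eta_{\mu,K}\rangle + h_{\Pi_\mu K}(\theta)\bigr) = \langle \theta, \eta_{\mu,K}\rangle - h_{\Pi_\mu K}(\theta) = -h_{\Pi_\mu K-\eta_{\mu,K}}(\theta).$$

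The main technical obstacle is the interchange of limit and integral in the bulk term; this is precisely why the hypothesis demands that $\phi$ be locally Lipschitz (supplying the uniform dominating constant on a compact neighborhood of $K$ inside $\Omega$), rather than merely a.e.\ differentiable. The boundary piece is handled cleanly by Lemma~\ref{l:squeeze}, and the algebraic assembly uses only the definition of $\eta_{\mu,K}$ and the standard identity $h_{\Pi_\mu K - \eta_{\mu,K}}(\theta) = h_{\Pi_\mu K}(\theta) - \langle \theta, \eta_{\mu,K}\rangle$ for support functions under translation.
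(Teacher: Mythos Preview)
Your proof is correct and follows essentially the same approach as the paper: both decompose the increment into the bulk term $\mu(K+r\theta)-\mu(K)$ (handled by Rademacher plus dominated convergence to yield $2\langle\eta_{\mu,K},\theta\rangle$) and the boundary term $\mu\bigl((K+r\theta)\setminus K\bigr)$ (handled by Lemma~\ref{l:squeeze} with $q\equiv 0$ and then Lemma~\ref{l:class}). The only cosmetic difference is that you reach this decomposition via an inclusion--exclusion identity, whereas the paper reaches it through a chain of variable substitutions; the computations thereafter are identical.
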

\begin{proof}
We must show that the quantity
\begin{equation*}
    \diff{g_{\mu,K}(r\theta)}{r}\bigg|_{r=0}=\lim_{r\to 0^+}\frac{1}{r}\left(\int_K\chi_K(y-r\theta)\phi(y)dy-\int_K\phi(y)dy\right)
\end{equation*}
equals the right-hand side of \eqref{e:deriv_g_mu_covario}. First, observe that we can equivalently write this as
$$
    \diff{g_{\mu,K}(r\theta)}{r}\bigg|_{r=0}=\lim_{r\to 0^+}\frac{1}{r}\left(\int_{K\cap( K-r\theta)}\phi(y+r\theta)dy-\int_K\phi(y)dy\right)
$$
by performing the variable substitution $y\to y+r\theta.$ Next, since $\phi$ is locally Lipschitz in $\Omega$, it is Lipschitz on $K$; we have, via Lemma~\ref{l:rade}, that $\phi$ is differentiable almost everywhere in $K$. In particular, $\phi$ is Lipschitz radially and hence its radial derivative is bounded almost everywhere (from the classical fact that one-dimensional Lipschitz functions have a derivative bounded almost everywhere \cite{Stein}). So, using dominated convergence, we may write
$$\lim_{r\to 0^+} \frac{1}{r}\left(\int_K \phi (y+r\theta)dy-\int_K \phi(y)dy\right)=\int_K \langle \nabla \phi(y),\theta\rangle dy.$$

Hence, we may write, recalling the definition of $\eta_{\mu,K}$ in \eqref{e:eta},
\begin{align*}
    \diff{g_{\mu,K}(r\theta)}{r}\bigg|_{r=0}
    &=\int_K \langle \nabla \phi(y),\theta\rangle dy+\lim_{r\to 0^+}\frac{1}{r}\left(\int_{K\cap( K-r\theta)}\phi(y+r\theta)dy-\int_K\phi(y+r\theta)dy\right)
    \\
    &=\int_K \langle \nabla \phi(y),\theta\rangle dy+\lim_{r\to 0^+}\frac{1}{r}\left(\int_{K\cap( K+r\theta)}\phi(y)dy-\int_{K+r\theta}\phi(y)dy\right)
    \\
    &=\int_K \langle \nabla \phi(y),\theta\rangle dy-\lim_{r\to 0^+}\frac{1}{r}\left(\int_{K+r\theta}\phi(y)dy-\int_{K\cap( K+r\theta)}\phi(y)dy\right)
    \\
    &=\int_K \langle \nabla \phi(y),\theta\rangle dy-\lim_{r\to 0^+}\frac{1}{r}\int_{(K+r\theta)\setminus K}\phi(y)dy.
    \\
    &=2\langle\eta_{\mu,K},\theta\rangle-\lim_{r\to 0^+}\frac{1}{r}\int_{(K+r\theta)\setminus K}\phi(y)dy.
\end{align*}
 
From Lemma~\ref{l:squeeze}, with $q(r)=0$ and $\Psi=\phi,$ we have 
$$\lim_{r\to 0^+}\frac{1}{r} \int_{(K+r\theta)\setminus K}\phi(y)dy=\int_{\{s\in\partial K:\langle n_K(s),\theta\rangle \geq 0\}} |\langle n_K(s),\theta\rangle| \phi(s)ds=\left\langle\theta, \eta_{\mu,K}\right\rangle+h_{\Pi_{\mu} K}(\theta),$$
where the last equality follows from Lemma~\ref{l:class}. Inserting into the above, we obtain
$$\diff{g_{\mu,K}(r\theta)}{r}\bigg|_{r=0}=2\left\langle\theta, \eta_{\mu,K}\right\rangle-\lim_{r\to 0}\int_{(K+r\theta)\setminus K}\phi(y)dy=\left\langle\theta, \eta_{\mu,K}\right\rangle-h_{\Pi_{\mu} K}(\theta).$$
\end{proof}

\begin{remark}
\label{re:old_fact}
Taking $\mu=\lambda$, that is $\phi\equiv1$, in \eqref{e:deriv_g_mu_covario} yields
$$
	    \diff{g_K(r\theta)}{r}\bigg|_{r=0}=-h_{\Pi K}(\theta),
$$
which finishes the proof of Proposition~\ref{covario_prop}.
\end{remark}
\begin{remark}
Since $g_{\mu,K}$ is decreasing as a function of $r$, we have the derivative is non-positive. Thus, $h_{{\Pi_\mu K} -\eta_{\mu,K}}(\theta) \geq 0$ for every $\theta\in\s^{n-1}$. From convexity, this means that $0\in \Pi_{\mu} K-\eta_{\mu,K}.$ So, the vector $\eta_{\mu,K}$ does not shift $\Pi_{\mu} K$ completely away from the origin.
\end{remark}

\noindent We now show that the $\mu$-covariogram inherits the concavity of $\mu$.
\begin{lemma}
\label{l:covario_concave}
Consider a class of convex bodies $\mathcal{C}\subseteq\conbod$ with the property that $K\in \mathcal{C} \rightarrow K\cap(K+x)\in\mathcal{C}$ for every $x\in DK$. Let $\mu$ be a Borel measure finite on every $K\in\mathcal{C}.$ Suppose $F$ is a continuous and invertible function such that $\mu$ is $F$-concave on $\mathcal{C}$. Then, for $K\in\mathcal{C},$ $g_{\mu,K}$ is also $F$-concave, in the sense that, if $F$ is increasing, then $F\circ g_{\mu,K}$ is concave, and if $F$ is decreasing, then $F\circ g_{\mu,K}$ is convex.
\end{lemma}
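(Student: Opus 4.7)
The plan is to reduce the concavity of $g_{\mu,K}$ to the $F$-concavity of $\mu$ by exploiting the classical set-theoretic inclusion
\[
(1-t)[K\cap(K+x)] + t[K\cap(K+y)] \subseteq K\cap\bigl(K+(1-t)x+ty\bigr),
\]
which already appeared in the proof of Proposition~\ref{covario_prop}. Fixing $x,y\in DK$ and $t\in[0,1]$, this inclusion is verified by taking $a\in K\cap(K+x)$ and $b\in K\cap(K+y)$, noting $(1-t)a+tb\in K$ by convexity, and writing $(1-t)a+tb=\bigl[(1-t)(a-x)+t(b-y)\bigr]+(1-t)x+ty$, where the bracketed term lies in $K$ since $a-x,\,b-y\in K$.

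Next, I would invoke monotonicity of the Borel measure $\mu$ (which is automatic for positive measures) to conclude
\[
\mu\bigl((1-t)[K\cap(K+x)] + t[K\cap(K+y)]\bigr) \;\le\; g_{\mu,K}\bigl((1-t)x+ty\bigr).
\]
By the standing hypothesis on $\mathcal{C}$, both $K\cap(K+x)$ and $K\cap(K+y)$ belong to $\mathcal{C}$, so the $F$-concavity of $\mu$ on $\mathcal{C}$ applies and yields
\[
\mu\bigl((1-t)[K\cap(K+x)] + t[K\cap(K+y)]\bigr) \;\ge\; F^{-1}\!\left((1-t)F(g_{\mu,K}(x)) + tF(g_{\mu,K}(y))\right).
\]
Combining the two displays gives
\[
g_{\mu,K}\bigl((1-t)x+ty\bigr) \;\ge\; F^{-1}\!\left((1-t)F(g_{\mu,K}(x)) + tF(g_{\mu,K}(y))\right).
\]

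Finally I would apply $F$ to both sides and split according to its monotonicity. If $F$ is increasing, then $F$ preserves the inequality and $F\circ g_{\mu,K}$ is concave; if $F$ is decreasing, then $F$ reverses the inequality and $F\circ g_{\mu,K}$ is convex. (Note that the assumption of continuity and invertibility on the interval $(0,\mu(\R^n))$ forces $F$ to be strictly monotone, so this case split is exhaustive.) The main conceptual point is the set inclusion above, which is routine; the only subtle bookkeeping is the direction reversal under decreasing $F$, but this is exactly what is needed for the conclusion to match the stated definition of $F$-concavity for $g_{\mu,K}$.
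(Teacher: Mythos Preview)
Your proof is correct and follows essentially the same approach as the paper: both arguments hinge on the set inclusion $(1-t)[K\cap(K+x)] + t[K\cap(K+y)] \subseteq K\cap(K+(1-t)x+ty)$, then apply monotonicity of $\mu$ followed by its $F$-concavity. You add slightly more detail by verifying the inclusion explicitly and spelling out the increasing/decreasing case split, which the paper leaves implicit.
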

\begin{proof}
We first observe the following set inclusion:
for $x, y \in \R^{n}$ and $\lambda \in [0,1]$, we have from convexity that
$$
\begin{aligned}
K \cap(K+(1-\lambda) x+\lambda y) &=K \cap((1-\lambda)(K+x)+\lambda(K+y)) \\
& \supset(1-\lambda)(K \cap(K+x))+\lambda(K \cap(K+y)).
\end{aligned}
$$
Using this set inclusion, we obtain that
$$g_{\mu,K}((1-\lambda) x+\lambda y) \geq \mu((1-\lambda)(K \cap(K+x))+\lambda(K \cap(K+y))).$$
From the fact that $\mu$ is $F$-concave, we obtain
$$
\begin{aligned}
g_{\mu,K}((1-\lambda) x+\lambda y) & \geq F^{-1}\left((1-\lambda) F\left(\mu(K \cap(K+x))\right)+\lambda F\left(\mu(K \cap(K+y))\right)\right) \\
&=F^{-1}\left((1-\lambda) F(g_{\mu,K}(x))+\lambda F(g_{\mu,K}(y))\right).
\end{aligned}
$$
\end{proof}

We now show that the measure-dependent projection bodies satisfy an analogue of \eqref{e:set_inclusion}. Let $f=F\circ g_{\mu,K}$ for some $K\in\conbod$ and $\mu\in\Lambda$ such that $\mu$ is $F$-concave, $F$ a non-negative, differentiable, increasing function. Then, from Lemma~\ref{l:covario_concave}, $f$ is a concave function supported on $DK$. We set $z(\theta)=\frac{F(\mu(K))}{F^\prime(\mu(K))}\rho_{\left({\Pi_{\mu}K-\eta_{\mu,K}}\right)^\circ}(\theta)$ and obtain, via Lemma~\ref{l:concave} that, $\rho_{DK}(\theta)\leq z(\theta).$ Hence,
\begin{equation}\label{e:mu_set_incl}
DK\subseteq \frac{F(\mu(K))}{F^\prime(\mu(K))} \left({\Pi_{\mu}K-\eta_{\mu,K}}\right)^\circ.
\end{equation}
To make this result more clear, suppose that $K$ is $\mu$-projective. Then, $\Pi_{\mu}K-\eta_{\mu,K} = \Pi_{\mu}K$
and so
$$DK\subseteq \frac{F(\mu(K))}{F^\prime(\mu(K))}\Pi^\circ_{\mu}K.$$
When $\mu$ is the full dimensional Lebesgue measure, we can set $F(x)=x^{1/n}$ via the Brunn-Minkowksi inequality and obtain the extremes of \eqref{e:set_inclusion}:
$DK\subseteq n\vol_n(K)\Pi^\circ K.$

Now, for $\mu\in\Lambda$ with density $\phi$ and $K \in \mathcal{K}^n$, we have, for every $x\in\R^n,$
$
\left(\inf_{x\in \partial K}\phi(x)\right) h_{\Pi K}(x)\leq h_{\Pi_{\mu} K}(x),$
or equivalently, 
$$\inf_{x\in \partial K}\phi(x){\Pi^\circ_\mu K}\subseteq{\Pi^\circ K},$$
with equality if, and only if, $\phi$ is a constant on $\partial K$. Using that $\vol_n(K)\Pi^\circ K \subseteq DK$ from \eqref{eq:spectral} and \eqref{e:mu_set_incl} yields,
\begin{equation}
    \vol_n(K)\left(\inf_{x\in \partial K}\phi(x)\right){\Pi^\circ_\mu K}\subseteq\vol_n(K){\Pi^\circ K}\subseteq  DK \subseteq\frac{F(\mu(K))}{F^\prime(\mu(K))} \left({\Pi_{\mu}K-\eta_{\mu,K}}\right)^\circ.
    \label{eq:big_set}
\end{equation}

\noindent There is equality in the first set-inclusion if, and only if, $\phi$ is a constant on the boundary of $K.$ From the symmetry of $DK$ and $\Pi^\circ_\mu K,$ in conjunction with the equality conditions of Lemma~\ref{l:concave}, there is equality in the third set inclusion if, and only if, $K$ is $\mu$-projective and $F\circ g_{\mu,K}(r\theta)=F(\mu(K))\left[1-\frac{r}{\rho_{DK}(\theta)}\right]$ for $r\in [0,\rho_{DK}(\theta)].$

We conclude this section by discussing the Gaussian measure. In \cite{GZ}, Gardner and Zvavitch showed inequalities concerning the concavity of the Gaussian measure, by considering alternatives to Minkowski addition. They then asked if the following is true: for $K,L\in\conbod_0$ and $t\in[0,1]$,
\begin{equation}\label{e:gamma_gaussian}
    \gamma_n\left((1-t) K + t L\right)^{1/n}\geq (1-t)\gamma_n(K)^{1/n} + t \gamma_n(L)^{1/n},
\end{equation}
i.e. is the $\gamma_n$ measure $1/n$-concave over $\conbod_0$? A counterexample was shown in \cite{PT} when $K$ and $L$ are not symmetric. Important progress was made in \cite{KL}, which lead to the resolution of inequality \eqref{e:gamma_gaussian} in \cite{EM} for symmetric convex bodies. The question remains open for convex bodies whose Gaussian center of mass is at the origin. 

We see that \eqref{e:gamma_gaussian} improves the concavity of $\gamma_n$ on symmetric convex bodies. We would like to use this fact, however \eqref{e:gamma_gaussian} cannot be applied directly to study $g_{\gamma_n,K}$, because, in general, $K$ being symmetric does not imply the set $K\cap(K+x)$ is symmetric. Thus, we would like to create a symmetric version of the covariogram. Let us now deduce this covariogram. One can show that if $K$ is symmetric, then $K\cap(K+x) -x/2$ is symmetric. So, for $x,y\in DK$, we can apply  the inequality \eqref{e:gamma_gaussian} to the symmetric convex sets $K\cap(K+y)-y/2$ and $K\cap(K+x)-x/2$ and obtain, for $t\in[0,1],$
\begin{align*}\gamma_n\left((1-t) (K\cap(K+x)) + t(K\cap(K+y)) -\left((1-t)\frac{x}{2}+t \frac{y}{2}\right)\right)^{1/n}
\\
\geq (1-t)\gamma_n(K\cap(K+x)-x/2)^{1/n} + t \gamma_n(K\cap(K+y)-y/2)^{1/n},
\end{align*}
i.e. the function $$x\to\gamma_n(K\cap(K+x)-x/2)^{1/n}$$ is concave. It is not hard to show, by using that both $K$ and $K\cap(K+x)-x/2$ are symmetric, that $$K\cap(K+x)-x/2=(K-x/2)\cap(K+x/2).$$ From these considerations, we are motivated to define the following polarized covariogram.
\begin{definition}
\label{def:polar}
Consider $K\in\conbod$ and a Borel measure $\mu$ on $\R^n$. The polarized $\mu$-covariogram is defined as
$$r_{\mu,K}(x)=\mu((K-x/2)\cap(K+x/2))$$
and the polarized $\mu$-brightness of $K$ is given as
$$\diff{r_{\mu,K}(r\theta)}{r}\bigg|_{r=0}.$$
\end{definition}
\noindent Furthermore, we note that if $R$ is a continuous and invertible function such that $\mu$ is $R$-concave on symmetric sets, then $R\circ r_{\mu,K}$ is concave or convex (merely repeat the computation done with $\gamma_n^{1/n}$ preceding Definition~\ref{def:polar}). 

\noindent In the volume case, it is clear from the translation invariance of the Lebesgue measure that $r_{\lambda,K}(x)=g_{K}(x)$, and this property is in general not true for other measures. In the next proposition we show that the polarized $\mu$-brightness of $K$ coincides with the $\mu$-brightness of $K$ under the assumptions that $K$ is symmetric and $\mu$ is even. 
\begin{proposition}
Consider a symmetric $K\in\conbod_0$, and let $\mu\in\Lambda$ be an even measure with density $\phi$ that is Lipschitz on a domain $\Omega$ containing $K$. Then,
\begin{equation*}
    \diff{r_{\mu,K}(r\theta)}{r}\bigg|_{r=0}=-h_{\Pi_\mu K}(\theta).
    \end{equation*}
\end{proposition}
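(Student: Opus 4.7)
The strategy is to reduce the polarized covariogram $r_{\mu,K}$ to the ordinary $\mu$-covariogram $g_{\mu,K}$ and then invoke the already-proved derivative formula~\eqref{e:deriv_g_mu_covario}. The starting observation is the set identity
$$(K - r\theta/2)\cap (K + r\theta/2) \;=\; [K\cap(K+r\theta)] - \tfrac{r\theta}{2},$$
which follows at once by substituting $w = z + r\theta/2$ in the definition of the left-hand side: $z$ lies in the left iff $z\pm r\theta/2\in K$ iff $w\in K\cap(K+r\theta)$. A translation of variables in the integral representation then gives
$$r_{\mu,K}(r\theta) \;=\; \int_{K\cap(K+r\theta)} \phi\!\left(y - \tfrac{r\theta}{2}\right) dy.$$

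Next, I would decompose
$$r_{\mu,K}(r\theta) - \mu(K) \;=\; [g_{\mu,K}(r\theta) - \mu(K)] \;-\; I(r), \qquad I(r) \;:=\; \int_{K\cap(K+r\theta)}\!\!\bigl[\phi(y) - \phi(y - r\theta/2)\bigr] dy.$$
The bracketed term is handled verbatim by the formula~\eqref{e:deriv_g_mu_covario}, yielding $\tfrac{1}{r}[g_{\mu,K}(r\theta) - \mu(K)] \to -h_{\Pi_\mu K}(\theta) + \langle\theta, \eta_{\mu,K}\rangle$ as $r \to 0^+$.

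For $I(r)/r$, Rademacher's theorem (Lemma~\ref{l:rade}) furnishes $\nabla\phi$ almost everywhere on the domain $\Omega \supseteq K$, and the fundamental theorem of calculus along rays gives
$$\phi(y) - \phi(y - r\theta/2) \;=\; \int_0^{r/2} \langle \nabla\phi(y - s\theta), \theta\rangle\,ds$$
for a.e.\ $y$. Inserting this into $I(r)$ and applying Fubini, using that $\nabla\phi$ is essentially bounded on a neighborhood of $K$ (from the Lipschitz hypothesis) and that $K\cap(K+r\theta)$ monotonically exhausts $K$ as $r\to 0^+$, dominated convergence yields
$$\frac{I(r)}{r} \;\longrightarrow\; \frac{1}{2}\int_K \langle\nabla\phi(y),\theta\rangle\,dy \;=\; \langle\theta, \eta_{\mu,K}\rangle.$$

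Subtracting the two limits, the $\eta_{\mu,K}$ contributions cancel exactly, producing $-h_{\Pi_\mu K}(\theta)$. Under the hypotheses of the proposition ($K$ symmetric and $\mu$ even), Example~\ref{even_proj} ensures $\eta_{\mu,K} = 0$, so the cancellation is trivial; in fact the calculation reveals that the polarized covariogram is designed precisely so that its radial derivative produces $-h_{\Pi_\mu K}(\theta)$ without a translation correction, even in the non-symmetric case. The main technical obstacle is the dominated-convergence step for $I(r)/r$, where one must simultaneously control the varying region of integration via the monotone inclusion $K\cap(K+r\theta)\nearrow K$ and dominate the integrand using the essential bound on $\nabla\phi$ supplied by the Lipschitz hypothesis.
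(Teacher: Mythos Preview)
Your proof is correct and takes a genuinely different route from the paper's. The paper computes the derivative of $r_{\mu,K}$ from scratch: it performs the same translation to reach $\int_{K\cap(K+r\theta)}\phi(y-r\theta/2)\,dy$, but then adds and subtracts $\int_K\phi(y+r\theta/2)\,dy$, uses the Lipschitz bound to strip the $r\theta/2$ shift, and finally invokes Lemma~\ref{l:squeeze} together with Lemma~\ref{l:class} directly, inserting $\eta_{\mu,K}=0$ at two separate points in the computation. Your approach instead recycles the already-established formula~\eqref{e:deriv_g_mu_covario} for $g_{\mu,K}$ and isolates the discrepancy $I(r)$, whose limit you identify as $\langle\theta,\eta_{\mu,K}\rangle$ via dominated convergence. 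This is more economical, and your closing remark is a genuine bonus: because the two $\eta_{\mu,K}$ contributions cancel \emph{algebraically}, the identity $\partial_r r_{\mu,K}(r\theta)\big|_{r=0}=-h_{\Pi_\mu K}(\theta)$ in fact holds for arbitrary $K\in\conbod$ and $\mu\in\Lambda$ with locally Lipschitz density, with no symmetry or evenness required. The paper's argument, if one tracks the $\eta_{\mu,K}$ terms rather than zeroing them out, yields the same conclusion, but the paper does not state it. One minor clarification for your $I(r)/r$ step: the cleanest justification is to write $\frac{I(r)}{r}=\frac{1}{2}\int_K \chi_{K\cap(K+r\theta)}(y)\,\frac{\phi(y)-\phi(y-r\theta/2)}{r/2}\,dy$ and apply dominated convergence directly (the integrand is dominated by $\|\phi\|_{\text{Lip}}\chi_K$ and converges a.e.\ to $\langle\nabla\phi(y),\theta\rangle$ by Rademacher); the Fubini detour through $\int_0^{r/2}\langle\nabla\phi(y-s\theta),\theta\rangle\,ds$ works too but needs an extra $L^1$-continuity-of-translations remark for $\nabla\phi$.
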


\begin{proof}
We must show that the quantity
\begin{equation*}
    \diff{r_{\mu,K}(r\theta)}{r}\bigg|_{r=0}=\lim_{r\to 0^+}\frac{1}{r}\left(\int_{(K-\frac{r\theta}{2})\cap (K+\frac{r\theta}{2})}\phi\left(y\right)dy-\int_K\phi(y)dy\right).
\end{equation*}
equals $-h_{\Pi_\mu K}(\theta)$. Since $\phi$ is locally Lipschitz in $\Omega$, we can again use dominated convergence to write
$$\lim_{r\to 0^+} \frac{1}{r}\left(\int_K \phi \left(y+\frac{r\theta}{2}\right)dy-\int_K \phi(y)dy\right)=\frac{1}{2}\int_K \langle \nabla \phi(y),\theta\rangle dy=\langle\eta_{\mu,K},\theta\rangle=\langle0,\theta\rangle=0,$$
since $K$ is symmetric and $\mu$ is even. Hence, using the variable substitution $y\to y-\frac{r\theta}{2}$ in the first integral,
\begin{align*}
    &\diff{r_{\mu,K}(r\theta)}{r}\bigg|_{r=0}
    =\lim_{r\to 0^+}\frac{1}{r}\left(\int_{K\cap (K+r\theta)}\phi\left(y-\frac{r\theta}{2}\right)dy-\int_{K}\phi\left(y+\frac{r\theta}{2}\right)dy\right)
    \\
    &=\lim_{r\to 0^+}\frac{1}{r}\left(\int_{K\cap( K+r\theta)}\phi\left(y-\frac{r\theta}{2}\right)dy-\int_{K+r\theta}\phi\left(y-\frac{r\theta}{2}\right)dy\right)
    \\
    &=-\lim_{r\to 0^+}\frac{1}{r}\int_{(K+r\theta)\setminus K}\phi\left(y-\frac{r\theta}{2}\right)dy,
\end{align*}
where, in the second equality, we used the substitution $y\to y-r\theta$ on the second integral. We note that
$$\frac{1}{r}\left|\int_{(K+r\theta)\setminus K}\left(\phi\left(y-\frac{r\theta}{2}\right)-\phi(y)\right)dy\right| \leq \frac{\|\phi\|_{\text{Lip}(\Omega)}}{2}\int_{(K+r\theta)\setminus K}dy,$$
which goes to zero as $r\to 0^{+}.$ Therefore, we obtain
$$\diff{r_{\mu,K}(r\theta)}{r}\bigg|_{r=0}=-\lim_{r\to 0^+}\frac{1}{r}\int_{(K+r\theta)\setminus K}\phi\left(y\right)dy.$$

\noindent Next, we invoke Lemma~\ref{l:squeeze} with $\phi=\Psi$, $q(r)=0$ to obtain, since $K$ is symmetric and $\phi$ is even,
\begin{align*}\lim_{r\to 0^+}\frac{1}{r}\int_{(K+r\theta)\setminus K}\phi(y)dy=\int_{\{s\in\partial K:\langle n_K(s),\theta\rangle \geq 0\}} |\langle n_K(s),\theta\rangle| \phi(s)ds=h_{\Pi_{\mu} K}(\theta)
\end{align*}
from Lemma~\ref{l:class}. Consequently, we obtain
$$\diff{r_{\mu,K}(r\theta)}{r}\bigg|_{r=0}=-\lim_{r\to 0^+}\frac{1}{r}\int_{(K+r\theta)\setminus K}\phi(y)dy=-h_{\Pi_{\mu} K}(\theta).$$
\end{proof}

The function $r_{\mu,K}$ still makes sense for arbitrary $K\in\conbod_0.$ In fact, the support of $r_{\mu,K}$ is $DK.$ Indeed, we will show that, for fixed $\lambda \in [0,1]$ and $K,L\in\conbod_0,$ one has $$S:=\{x\in\R^n: (K-\lambda x)\cap(L+(1-\lambda)x)\neq \emptyset\}=K+(-L).$$ Setting $\lambda =1/2$ and $L=K$ will then yield that the support of $r_{\mu,K}$ is $DK.$ So, take $x\in S.$ Then, there exists $y\in K$ and $z\in L$ such that there is some $a\in (K-\lambda x)\cap(L+(1-\lambda)x)$ with the representations $z+(1-\lambda)x=a=y-\lambda x.$
Solving for $x,$ we obtain $x=y-z=y+(-z)\in K+(-L).$ Conversely, let $x\in K+(-L).$ Then, there exists $y\in K$ and $z\in L$ so that $x=y-z.$ Define $a_1=y-\lambda x$ and $a_2=z+(1-\lambda)x.$ If we show that $a_1=a_2 \equiv a,$ then $a\in (K-\lambda x)\cap(L+(1-\lambda)x)$ and so $x\in S.$ But, notice that $a_2-a_1=z-y+x=0$ from the representation of $x,$ and so the claim follows.

Let $f=R\circ r_{\mu,K}$ for some symmetric $K\in\conbod_0$ and even $\mu\in\Lambda$ such that $\mu$ is $R$-concave, $R$ a non-negative, differentiable, increasing function. Then, $f$ is a concave function supported on $DK$. We set $z(\theta)=\frac{R(\mu(K))}{R^\prime(\mu(K))}\rho_{{\Pi_{\mu}^\circ K}}(\theta)$ and obtain, via Lemma~\ref{l:concave} that, $$\rho_{DK}(\theta)\leq z(\theta).$$ Hence,
\begin{equation}\label{e:polar_mu_set_incl}
DK\subseteq \frac{R(\mu(K))}{R^\prime(\mu(K))} \Pi_{\mu}^\circ K.
\end{equation}
As an example, if $\mu=\gamma_n,$ then from the resolution of \eqref{e:gamma_gaussian} for symmetric convex bodies, we can set $R(x)=x^{1/n}$ and obtain
$$DK\subseteq n\gamma_n(K) \Pi_{\gamma_n}^\circ K.$$

\section{Measure-Dependent Projection Bodies for Functions}
\label{sec:functions}
We are motivated to generalize measure-dependent projection bodies $\Pi_\mu K$ to a larger class of functions. To do so, we introduce the following definition.
\begin{definition}
For $\mu\in\Lambda$ and a convex body $K\in\conbod$, consider a non-negative $f\in L^1(\mu,\partial K)$. Then the projection body of $f$ with respect to $\mu$ and $K$ is the convex body whose support function is given by
\begin{equation}\label{e:weak_supp_f_mu}
    h_{\Pi_{\mu,K} f}(\theta)=\frac{1}{2}\int_{\partial K}|\langle\theta,n_K(y) \rangle| f(y)\phi(y) dy.
\end{equation}
\end{definition}

We see that \eqref{e:mu_polar_suppot} is a special case of \eqref{e:weak_supp_f_mu}, as one has that $h_{\Pi_{\mu,K} \chi_K}=h_{\Pi_\mu K}$. The advantage of this definition is that we can allow $f$ and $\phi$ to have different restrictions. Ideologically, $f$ is a function that only need to be {\it nice} on $K$, while $\phi$ needs to be {\it smooth} beyond $K$ (where {\it nice} and {\it smooth} will be stated precisely in context). We next define the covariogram of a function to be the following:
\begin{definition}
Fix $K\in\conbod$. Consider $\mu\in\Lambda$ with density $\phi$. Furthermore, let $f\in L^1(\mu, K)$. Then, we define the $\mu$-covariogram of $f$ as
\begin{equation}
    g_{\mu,f}(K,x)=\int_{K\cap(K+x)} f(y-x)\phi(y)dy.
    \label{f_covario_eq}
\end{equation}
\label{f_covario}
\end{definition}
We would like to  emphasize the difference between \eqref{f_covario_eq} and \eqref{e:mu_covario}. Indeed, observe that, when $f=\phi$, we get
$$
g_{\mu,\phi}(K,x)=\int_{K\cap(K+x)} \phi(y-x)\phi(y)dy 
%\neq \int_{K\cap(K+x)} \chi_K(y-x)\phi(y)dy=g_{\mu, K}(x)
,$$
which is different from $g_{\mu, K}(x)$ and cannot be obtained by considering a measure with the density $\phi^2$. More generally, \eqref{f_covario_eq} is a generalization of \eqref{e:mu_covario}, which cannot be studied by simply considering a density of $f\phi$, as can be seen, in particular, from the next theorem.
\begin{theorem}
\label{f_brightness}
Fix $K\in\conbod$. Let $f$ be a differentiable almost everywhere, non-negative function that is bounded on $K$ and $\mu\in\Lambda$ with density $\phi$ that is Lipschitz in a domain $\Omega$ containing $K$. Then, one has
\begin{equation*}
    \diff{g_{\mu,f}(K,r\theta)}{r}\bigg|_{r=0}=\frac{1}{2}\int_K\langle \left[f(y)\nabla\phi(y)-\phi(y)\nabla f(y) \right],\theta\rangle dy-h_{\Pi_{\mu,K}f}(\theta), \quad \theta\in\s^{n-1}.
\end{equation*}
\end{theorem}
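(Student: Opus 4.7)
The plan is to mimic the proof of the corresponding brightness formula for $g_{\mu,K}$, with the additional function $f$ giving rise to both a new $\nabla f$ term and the support function of the $f$-weighted projection body. First I would substitute $z = y - r\theta$ in the definition of $g_{\mu,f}(K,r\theta)$ to move the translation entirely onto $\phi$, obtaining
\[
g_{\mu,f}(K,r\theta) = \int_{K\cap(K-r\theta)} f(z)\phi(z+r\theta)\,dz.
\]
Writing $g_{\mu,f}(K,0) = \int_K f(z)\phi(z)\,dz$ and adding and subtracting $\int_K f(z)\phi(z+r\theta)\,dz$, the difference quotient splits into an interior term $A_r := \frac{1}{r}\int_K f(z)[\phi(z+r\theta)-\phi(z)]\,dz$ and a boundary term $B_r := \frac{1}{r}\int_{K\setminus(K-r\theta)}f(z)\phi(z+r\theta)\,dz$, so that the derivative to be computed is $\lim_{r\to 0^+}(A_r - B_r)$.

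Since $\phi$ is Lipschitz on $\Omega\supset K$ and $f$ is bounded on $K$, dominated convergence (justified by the uniform bound $\|\phi\|_{\text{Lip}(\Omega)}$ on the difference quotients together with Rademacher's theorem, Lemma~\ref{l:rade}) yields
\[
\lim_{r\to 0^+}A_r = \int_K f(z)\langle\nabla\phi(z),\theta\rangle\,dz.
\]
For $B_r$, the substitution $u = z+r\theta$ converts the domain to $(K+r\theta)\setminus K$; a Lipschitz bound on $\phi$ then lets me replace $\phi(u)$ by $\phi(u-r\theta)$ at an $L^1$-cost bounded by $\|f\|_\infty\|\phi\|_{\text{Lip}(\Omega)}\vol_n((K+r\theta)\setminus K) = O(r)$, which vanishes after dividing by $r$. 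Thus $\lim_{r\to 0^+} B_r = \lim_{r\to 0^+}\frac{1}{r}\int_{(K+r\theta)\setminus K}(f\phi)(u-r\theta)\,du$, and Lemma~\ref{l:squeeze}, applied with $\Psi = f\phi$ and $q(r)=-r$, identifies this limit as the one-sided boundary integral
\[
\int_{\{y\in\partial K:\,\langle n_K(y),\theta\rangle\geq 0\}}|\langle n_K(y),\theta\rangle|\,f(y)\phi(y)\,dy.
\]

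To conclude, I would apply the decomposition behind Lemma~\ref{l:class} with $\phi$ replaced by $f\phi$: splitting the above boundary integral into its positive and negative parts $A_1 \pm A_2$ gives the two identities $A_1 + A_2 = 2h_{\Pi_{\mu,K}f}(\theta)$ and $A_1 - A_2 = \int_{\partial K}\langle n_K,\theta\rangle f\phi\,dy$, and applying the Gauss-Green-Ostrogradsky theorem to $\nabla(f\phi)=f\nabla\phi+\phi\nabla f$ rewrites the latter as $\int_K\langle f\nabla\phi+\phi\nabla f,\theta\rangle\,dy$. Solving for $A_1$ yields
\[
\lim_{r\to 0^+} B_r = h_{\Pi_{\mu,K}f}(\theta) + \frac{1}{2}\int_K\langle f\nabla\phi + \phi\nabla f,\theta\rangle\,dy,
\]
and subtracting $\lim A_r - \lim B_r$ collapses the two $\int_K f\langle\nabla\phi,\theta\rangle\,dy$ contributions into the symmetric combination $\frac{1}{2}\int_K\langle f\nabla\phi - \phi\nabla f,\theta\rangle\,dy$, giving the announced formula.

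The main obstacle is applying Lemma~\ref{l:squeeze} with $\Psi = f\phi$: the lemma is stated for locally Lipschitz $\Psi$, whereas under the present hypotheses $f$ is only bounded and differentiable a.e., so $f\phi$ need not itself be Lipschitz. I would handle this either by (i) a direct one-dimensional Lebesgue-differentiation argument on boundary-normal slices of $K$, following the proof of Lemma~\ref{l:squeeze} but using only boundedness of $\Psi$ once the shift $q(r)\theta$ has been absorbed by the separate Lipschitz estimate on $\phi$ used above, or (ii) strengthening the hypothesis on $f$ to locally Lipschitz (which is in any case implicit in applying the Gauss-Green-Ostrogradsky theorem to $f\phi$ in the final step), in which case Lemma~\ref{l:squeeze} applies verbatim.
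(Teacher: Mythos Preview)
Your proof is essentially identical to the paper's: the same substitution $z=y-r\theta$, the same splitting into an interior term (handled by dominated convergence and Rademacher) and a boundary term, the same reduction to Lemma~\ref{l:squeeze} with $\Psi=f\phi$ and $q(r)=\pm r$, and the same final use of the Lemma~\ref{l:class} decomposition together with the divergence theorem on $\nabla(f\phi)$. The only cosmetic difference is that the paper performs the Lipschitz replacement on $\phi$ \emph{before} the change of variables rather than after.

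The obstacle you flag is real, and the paper glosses over it: it invokes Lemma~\ref{l:squeeze} with $\Psi=f\phi$ while only asserting ``since $f$ is bounded and $\phi$ is Lipschitz,'' which does not match the Lipschitz hypothesis stated for $\Psi$ in that lemma. Your option~(i) is the correct patch and is essentially what the proof of Lemma~\ref{l:squeeze} already does once the $q(r)$-shift has been absorbed: boundedness of $\Psi$ suffices for the dominated-convergence and $I_3$ steps, and the remaining limit $\frac{1}{r}\int_0^r \Psi(y+t\theta)\,dt\to\Psi(y)$ needs only the one-dimensional Lebesgue differentiation theorem a.e.\ on $\partial K$. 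Your option~(ii) (upgrading $f$ to locally Lipschitz) would also cleanly justify the Gauss--Green step on $f\phi$, which, as you note, the stated hypotheses do not obviously support either.
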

\begin{proof}
We must compute the limit
\begin{equation*}
\begin{split}
    \diff{g_{\mu,f}(K,r\theta)}{r}\bigg|_{r=0}&=\lim_{r\to 0^+}\frac{1}{r}\left(\int_{K\cap(K+r\theta)} f(y-r\theta)\phi(y)dy-\int_K f(y)\phi(y)dy\right).
\\
&=\lim_{r\to 0^+}\frac{1}{r}\left(\int_{K\cap( K-r\theta)}f(y)\phi(y+r\theta)dy-\int_K f(y)\phi(y)dy\right).
\end{split}
\end{equation*}
Next, since $\phi$ is Lipschitz on $\Omega$, and particularly on $K$, we have, via Lemma~\ref{l:rade} that $\phi$ is differentiable almost everywhere in $K$. In particular, $\phi$ is Lipschitz radially and hence its radial derivative is bounded almost everywhere (from the classical fact that one-dimensional Lipschitz functions have a derivative bounded almost everywhere \cite{Stein}). Since $f$ is also bounded, we can use dominated convergence to write
$$\lim_{r\to 0^+} \frac{1}{r}\left(\int_K f(y)\phi (y+r\theta)dy-\int_K f(y)\phi(y)dy\right)=\int_K \langle \nabla \phi(y),\theta\rangle f(y)dy.$$
Hence, we may write
\begin{align*}
    \diff{g_{\mu,f}(K,r\theta)}{r}\bigg|_{r=0}
    &=\int_K \langle \nabla \phi(y),\theta\rangle f(y)dy+\lim_{r\to 0^+}\frac{1}{r}\left(\int_{K\cap(K-r\theta)}f(y)\phi(y+r\theta)dy-\int_K f(y)\phi(y+r\theta)dy\right)
    % \\
    % &=\int_K \langle \nabla \phi(y),\theta\rangle f(y)dy-\lim_{r\to 0^+}\frac{1}{r}\left(\int_{K+r\theta}f(y-r\theta)\phi(y)dy-\int_{K\cap( K+r\theta)}f(y-r\theta)\phi(y)dy\right)
    % \\
    % &=\int_K \langle \nabla \phi(y),\theta\rangle f(y)dy-\lim_{r\to 0^+}\frac{1}{r}\int_{(K+r\theta)\setminus K}f(y-r\theta)\phi(y)dy
    \\
    &=\int_K \langle \nabla \phi(y),\theta\rangle f(y)dy-\lim_{r\to 0^+}\frac{1}{r}\int_{K\setminus (K-r\theta)}f(y)\phi(y+r\theta)dy
    \\
    &=\int_K \langle \nabla \phi(y),\theta\rangle f(y)dy-\lim_{r\to 0^+}\frac{1}{r}\int_{K\setminus (K-r\theta)}f(y)\phi(y)dy,
\end{align*}
where the last equality follows from $\phi$ being Lipschitz implying
$|\phi(y+r\theta)-\phi(y)|\leq \|\phi\|_{\text{Lip}(\Omega)}r.$
Furthermore, by hypothesis we have that $f$ is bounded on $K$ by some $M>0$. Hence, we see that
\begin{align*}
\bigg|\frac{1}{r}\int_{K\setminus (K-r\theta)}f(y)\phi(y+r\theta)dy-\frac{1}{r}\int_{K\setminus (K-r\theta)}f(y)\phi(y)dy\bigg|&\leq\frac{1}{r}\int_{K\setminus (K-r\theta)}\big|\phi(y+r\theta)-\phi(y)\big|f(y)dy
\\
&\leq M\|\phi\|_{\text{Lip}(\Omega)}\vol_n(K\setminus (K-r\theta))
\end{align*}
which goes to zero as $r\to 0$. Next perform a change of variables to obtain
\begin{align*}
    \diff{g_{\mu,f}(K,r\theta)}{r}\bigg|_{r=0}=\int_K \langle \nabla \phi(y),\theta\rangle f(y)dy-\lim_{r\to 0^+}\frac{1}{r}\int_{(K+r\theta)\setminus K}f(y+r\theta)\phi(y+r\theta)dy.
\end{align*}

From here, since $f$ is bounded and $\phi$ is Lipschitz on $K$ and $K+r\theta$, we can invoke Lemma~\ref{l:squeeze} with $\Psi=\phi\times f$ and $q(r)=r$. Thus, we obtain
\begin{align*}\diff{g_{\mu,f}(K,r\theta)}{r}\bigg|_{r=0}&=\int_K \langle \nabla \phi(y),\theta\rangle f(y)dy-\frac{1}{2}\int_K \langle\nabla(f(y)\phi(y)),\theta\rangle dy-h_{\Pi_{\mu,K} f}(\theta)
\\
&=\frac{1}{2}\int_K\langle \left[f(y)\nabla\phi(y)-\phi(y)\nabla f(y) \right],\theta\rangle dy-h_{\Pi_{\mu,K} f}(\theta).
\end{align*}
\end{proof}
Motivated by Theorem~\ref{f_brightness}, we define the following vector:
\begin{equation*}
    \tau_{\mu,f,K}=\frac{1}{2}\int_K \left(f(y)\nabla\phi(y)-\phi(y)\nabla f(y) \right) dy.
\end{equation*}
We say $f$ is projective with respect to $\mu$ and $K$, or merely $\mu$-projective, if $\tau_{\mu,f,K}=0.$ For ease of notation, we write the convex body $\Pi_{\mu,K} f-\tau_{\mu,f,K}$ whose support function is given by
\begin{equation}\label{e:supp_f_mu}
    h_{\Pi_{\mu,K} f-\tau_{\mu,f,K}}(\theta)=h_{\Pi_{\mu,K} f}(\theta)-\langle\tau_{\mu,f,K},\theta \rangle,
\end{equation}
and so we have that $f$ is $\mu$-projective if, and only if, $\Pi_{\mu,K} f-\tau_{\mu,f,K}=\Pi_{\mu,K} f.$
\begin{remark}
We can write the result of Theorem~\ref{f_brightness} as
$$\diff{g_{\mu,f}(K,r\theta)}{r}\bigg|_{r=0}=-h_{\Pi_{\mu,K} f-\tau_{\mu,f,K}}(\theta).$$
Furthermore, since $g_{\mu,f} (K,r\theta)$ is decreasing as a function of $r$, we have the derivative is non-positive. Thus, $h_{\Pi_{\mu,K} f-\tau_{\mu,f,K}}(\theta) \geq 0$ for every $\theta\in\s^{n-1}$. From convexity, this means that $0\in \Pi_{\mu,K} f-\tau_{\mu,f,K}.$ So, the vector $\tau_{\mu,f,K}$ does not separate $\Pi_{\mu,K} f$ from the origin.
\end{remark}
We see that $g_{\mu,f}(K,\cdot)$ is linear in $f$, and so is the result of Theorem~\ref{f_brightness}. Hence, we obtain the following corollary. We use the standard notation $f^+(x)=\max\{0,f(x)\}$ and $f^{-}(x)=\max\{0,-f(x)\}.$
\begin{corollary}
    Let $f$ be a differentiable almost everywhere function bounded on $K$ and $\mu\in\Lambda$ with density $\phi$ that is Lipschitz in a domain $\Omega$ containing $K$. Then, one has
    $$\diff{g_{\mu,f}(K,r\theta)}{r}\bigg|_{r=0}=\frac{1}{2}\int_K\langle \left[f(y)\nabla\phi(y)-\phi(y)\nabla f(y) \right],\theta\rangle dy-\frac{1}{2}\int_{\partial K}|\langle\theta,u \rangle| f(u)\phi(u) du, \quad \theta\in\s^{n-1}.$$
\end{corollary}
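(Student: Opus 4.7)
The plan is to reduce to Theorem~\ref{f_brightness} using the decomposition $f = f^+ - f^-$ suggested by the notation introduced immediately before the statement. Since $f$ is bounded and differentiable a.e.~on $K$, both $f^+$ and $f^-$ are non-negative, bounded, and differentiable a.e.~on $K$: at any point of differentiability of $f$ with $f \neq 0$, one of $f^\pm$ agrees locally with $f$ while the other vanishes, and at density points of $\{f = 0\}$ at which $f$ is differentiable we necessarily have $\nabla f = 0 = \nabla f^\pm$. This linearity-preserving decomposition is what makes a corollary, rather than a re-proof, possible: notice that the squeeze Lemma~\ref{l:squeeze} invoked in the proof of Theorem~\ref{f_brightness} requires the integrand $\Psi \geq 0$, so a direct repetition of that argument with $\Psi = \phi \cdot f$ is not available when $f$ changes sign.

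Next I exploit the manifest linearity of the $\mu$-covariogram in its functional argument. From Definition~\ref{f_covario},
$$g_{\mu,f}(K,x) = g_{\mu,f^+}(K,x) - g_{\mu,f^-}(K,x) \quad \text{for all } x \in DK,$$
and the same linearity transfers to the radial derivative at $r = 0$ once the two derivatives on the right-hand side exist. Applying Theorem~\ref{f_brightness} separately to the non-negative functions $f^+$ and $f^-$ (both bounded and differentiable a.e.~on $K$, with $\mu$ still satisfying the Lipschitz hypothesis on $\Omega$) yields, for $\theta \in \s^{n-1}$,
\begin{align*}
\diff{g_{\mu,f^\pm}(K,r\theta)}{r}\bigg|_{r=0} &= \frac{1}{2}\int_K \langle f^\pm(y)\nabla\phi(y) - \phi(y)\nabla f^\pm(y),\theta\rangle\, dy \\
&\quad - \frac{1}{2}\int_{\partial K} |\langle \theta, n_K(y)\rangle|\, f^\pm(y)\phi(y)\, dy.
\end{align*}
Subtracting these two identities, and invoking $f = f^+ - f^-$ together with the a.e.~identity $\nabla f = \nabla f^+ - \nabla f^-$ on $K$, produces exactly the formula stated in the corollary.

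The only technical point to pin down is the verification that $\nabla f = \nabla f^+ - \nabla f^-$ almost everywhere, which reduces to the classical fact recalled above regarding gradients on the zero set of a differentiable function. Beyond that, the argument is pure linearity bookkeeping: no new limit arguments, no reapplication of Lemma~\ref{l:squeeze}, and no further use of the Lipschitz hypothesis on $\phi$ are required once Theorem~\ref{f_brightness} is in hand.
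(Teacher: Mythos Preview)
Your proof is correct and follows essentially the same approach as the paper: decompose $f=f^+-f^-$, apply Theorem~\ref{f_brightness} to each non-negative piece, and subtract using the linearity of $g_{\mu,f}(K,\cdot)$ in $f$. You supply more detail than the paper does on why $f^\pm$ inherit a.e.\ differentiability and why $\nabla f = \nabla f^+ - \nabla f^-$ a.e., which is welcome, but the underlying argument is identical.
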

\begin{proof}
Write $f=f^{+}-f^{-}$, use linearity to obtain $g_{\mu,f}(K,\cdot)=g_{\mu,f^{+}}(K,\cdot)-g_{\mu,f^{-}}(K,\cdot),$ which then yields 
$$\diff{g_{\mu,f}(K,r\theta)}{r}\bigg|_{r=0}=\left(-h_{\Pi_{\mu,K} f^{+}-\tau_{\mu,f^{+},K}}(\theta)\right)-\left(-h_{\Pi_{\mu,K} f^{-}-\tau_{\mu,f^{-},K}}(\theta)\right),$$
which equals our claim via our the linearity of the integral in \eqref{e:weak_supp_f_mu}.
\end{proof}

\begin{remark}
If $f$ is also Lipschitz on a domain containing $K$, we can set $f=\phi$, and obtain that $\phi$ is $\mu$-projective, i.e.
$$\diff{g_{\mu,\phi}(K,r\theta)}{r}\bigg|_{r=0}=\diff{}{r}\left(\int_{K\cap(K+x)} \phi(y-x)\phi(y)dy\right)\bigg|_{r=0}=-h_{\Pi_{\mu,K} \phi}(\theta).$$
\end{remark}

\section{The Case of Log-Concave Measures}
\label{sec:log}
Next, we obtain a Zhang-type inequality for projection bodies $\Pi_{\mu,K} f-\tau_{\mu,f,K}$, whose support function is defined in \eqref{e:supp_f_mu}, when $\mu$ has a concavity similar to that of the logarithm function.
\begin{theorem}
Consider $K\in\conbod$ and $\mu\in\Lambda$ such that the density of $\mu$ is locally Lipschitz. Take a bounded function $f:\R^n\to \R^{+}$ that is differentiable almost everywhere and satisfies $\|f\|_{L^1(\mu,K)}\in (0,\infty)$. Now, suppose $Q:(0, \infty)\rightarrow \R$ is an invertible, increasing, differentiable function such that $\lim_{r\to 0^+}Q(r)\in [-\infty,\infty)$ and $Q\circ g_{\mu,f }$ is concave. Then, if $Q^\prime\left(\|f\|_{L^1(\mu,K)}\right)\neq 0$ we have
\begin{equation*}
    \int_{DK}g_{\mu,f}(K,x)dx\leq \frac{n\vol_n\left(\left(\Pi_{\mu,K} f-\tau_{\mu,f,K}\right)^\circ\right)}{\left[Q^\prime(\|f\|_{L^1(\mu,K)})\right]^{n}}
    \int_0^{\infty}Q^{-1}\left(Q\left(\|f\|_{L^1(\mu,K)}\right)-t\right)t^{n-1}dtd\theta.
\end{equation*}
\label{Qf_theorem}
\end{theorem}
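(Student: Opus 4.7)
The plan is to combine Theorem~\ref{f_brightness} (identifying the radial derivative of $g_{\mu,f}(K,\cdot)$) with the hypothesized concavity of $Q\circ g_{\mu,f}(K,\cdot)$ to get a pointwise upper bound on $g_{\mu,f}(K,r\theta)$ of Berwald/Lemma~\ref{l:concave} type, then integrate in polar coordinates and recognize the angular integral as the volume of the polar body via \eqref{e:polar_volume}.

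More precisely, set $A=Q'(\|f\|_{L^1(\mu,K)})$ and $B=Q(\|f\|_{L^1(\mu,K)})$, and let $h(\theta)=h_{\Pi_{\mu,K}f-\tau_{\mu,f,K}}(\theta)$. The function $F(r\theta):=Q(g_{\mu,f}(K,r\theta))$ is concave by hypothesis, satisfies $F(0)=B$, and, by the chain rule together with Theorem~\ref{f_brightness},
\[
\diff{F(r\theta)}{r}\bigg|_{r=0}= A\cdot\left(-h(\theta)\right).
\]
Concavity gives $F(r\theta)\le B-rAh(\theta)$, and since $Q$ is increasing and invertible (with $Q^{-1}$ extended by $0$ on values below $\lim_{r\to 0^+}Q(r)$, which is consistent with $g_{\mu,f}(K,\cdot)\ge 0$), we obtain the pointwise estimate
\[
g_{\mu,f}(K,r\theta)\le Q^{-1}\!\left(B-rAh(\theta)\right)\quad\text{for }\theta\in\s^{n-1},\ r\ge 0.
\]

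Next I would integrate in polar coordinates, noting that $g_{\mu,f}(K,x)$ is supported in $DK$ so the radial upper limit $\rho_{DK}(\theta)$ may be extended to $\infty$ (the extended $Q^{-1}$ is then $0$ past the natural cutoff):
\[
\int_{DK}g_{\mu,f}(K,x)\,dx=\int_{\s^{n-1}}\!\int_{0}^{\infty}g_{\mu,f}(K,r\theta)r^{n-1}\,dr\,d\theta\le\int_{\s^{n-1}}\!\int_{0}^{\infty}Q^{-1}\!\left(B-rAh(\theta)\right)r^{n-1}\,dr\,d\theta.
\]
The substitution $t=rAh(\theta)$ separates the $\theta$ and $t$ integrals:
\[
\int_{\s^{n-1}}\!\int_{0}^{\infty}Q^{-1}(B-rAh(\theta))r^{n-1}\,dr\,d\theta=\frac{1}{A^{n}}\left(\int_{\s^{n-1}}h(\theta)^{-n}\,d\theta\right)\int_{0}^{\infty}Q^{-1}(B-t)t^{n-1}\,dt.
\]
Finally, formula \eqref{e:polar_volume} applied to the convex body $\Pi_{\mu,K}f-\tau_{\mu,f,K}$ (which contains the origin, by the remark following Theorem~\ref{f_brightness}) identifies $\int_{\s^{n-1}}h(\theta)^{-n}\,d\theta=n\vol_n\!\left((\Pi_{\mu,K}f-\tau_{\mu,f,K})^\circ\right)$, yielding exactly the claimed inequality.

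The main obstacle, and the one place where care is needed, is the extension/inversion step: depending on $Q$, the quantity $B-rAh(\theta)$ can fall below the range of $Q$ as $r$ grows, so the bound $g_{\mu,f}(K,r\theta)\le Q^{-1}(B-rAh(\theta))$ must be interpreted with the convention $Q^{-1}(s)=0$ for $s\le\lim_{r\to 0^+}Q(r)$. This is consistent with the non-negativity of $g_{\mu,f}(K,\cdot)$ and with the fact that $g_{\mu,f}(K,r\theta)$ vanishes once $r>\rho_{DK}(\theta)$, so no information is lost in extending the radial integral to infinity. Everything else is routine computation; the hypothesis $Q'(\|f\|_{L^1(\mu,K)})\ne 0$ (combined with $Q$ increasing) ensures $A>0$ so the change of variables is legal, and the possibility $h(\theta)=0$ on a set of positive measure simply makes the right-hand side infinite and the inequality trivial.
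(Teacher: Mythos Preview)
Your proposal is correct and follows essentially the same route as the paper's own proof: apply Lemma~\ref{l:concave} (via Theorem~\ref{f_brightness} for the derivative at $r=0$) to obtain the pointwise bound $g_{\mu,f}(K,r\theta)\le Q^{-1}(B-rAh(\theta))$, integrate in polar coordinates, substitute $t=rAh(\theta)$, extend the upper limit to infinity, and identify the angular integral as $n\vol_n\!\left((\Pi_{\mu,K}f-\tau_{\mu,f,K})^\circ\right)$ via \eqref{e:polar_volume}. Your explicit treatment of the convention $Q^{-1}(s)=0$ for $s$ below the range of $Q$ is a welcome clarification of a point the paper leaves implicit.
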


\begin{proof}
Begin by applying Lemma~\ref{l:concave} to $Q\circ g_{\mu,f}(K,\cdot)$ to obtain
$$Q(0)<Q\circ g_{\mu,f}(K,r\theta)\leq Q\left(\|f\|_{L^1(\mu,K)}\right)-Q^\prime\left(\|f\|_{L^1(\mu,K)}\right)h_{\Pi_{\mu,K} f-\tau_{\mu,f,K}}(\theta)r.$$
This implies
\begin{equation}\label{e:g_infty_ineq}
    0\leq g_{\mu,f}(K,r\theta)\leq Q^{-1}\left(Q\left(\|f\|_{L^1(\mu,K)}\right)-Q^\prime\left(\|f\|_{L^1(\mu,K)}\right)h_{\Pi_{\mu,K} f-\tau_{\mu,f,K}}(\theta)r\right).
\end{equation}
We now directly compute:
\begin{align*}
    \int_{DK}g_{\mu,f}(K,x)dx&=\int_{\s^{n-1}}\int_0^{\rho_{DK}(\theta)}g_{\mu,f}(K,r\theta)r^{n-1}drd\theta
    \\
    &\leq\int_{\s^{n-1}}\int_0^{\rho_{DK}(\theta)}Q^{-1}\left(Q\left(\|f\|_{L^1(\mu,K)}\right)-Q^\prime\left(\|f\|_{L^1(\mu,K)}\right)h_{\Pi_{\mu,K} f-\tau_{\mu,f,K}}(\theta)r\right)r^{n-1}drd\theta,
\end{align*}
where \eqref{e:g_infty_ineq} was invoked. Setting $t=Q^\prime(\|f\|_{L^1(\mu,K)})h_{\Pi_{\mu,K} f-\tau_{\mu,f,K}}(\theta)r$ yields
\begin{align*}    
\int_{DK}g_{\mu,f}(K,x)dx\leq &\frac{1}{\left[Q^\prime(\|f\|_{L^1(\mu,K)})\right]^{n}} \int_{\s^{n-1}}h^{-n}_{\Pi_{\mu,K} f-\tau_{\mu,f,K}}(\theta)d\theta
\\
    &\times\int_0^{Q^\prime(\|f\|_{L^1(\mu,K)})h_{\Pi_{\mu,K} f-\tau_{\mu,f,K}}(\theta)\rho_{DK}(\theta)}Q^{-1}\left(Q\left(\|f\|_{L^1(\mu,K)}\right)-t\right)t^{n-1}dtd\theta.
\end{align*}
Now, the fact that $Q$ is increasing yields $Q^\prime\left(\|f\|_{L^1(\mu,K)}\right)>0$ and so $$0\leq Q^\prime\left(\|f\|_{L^1(\mu,K)}\right)h_{\Pi_{\mu,K} f-\tau_{\mu,f,K}}(\theta)\rho_{DK}(\theta)<\infty.$$ Hence, 
\begin{align*}
    \int_{DK}g_{\mu,f}(K,x)dx\leq \frac{1}{\left[Q^\prime(\|f\|_{L^1(\mu,K)})\right]^{n}} \int_{\s^{n-1}}h^{-n}_{\Pi_{\mu,K} f-\tau_{\mu,f,K}}(\theta)d\theta
    \int_0^{\infty}Q^{-1}\left(Q\left(\|f\|_{L^1(\mu,K)}\right)-t\right)t^{n-1}dtd\theta.
\end{align*}
Identifying the first integral on the right-hand side as $n\vol_n\left(\left(\Pi_{\mu,K} f-\tau_{\mu,f,K}\right)^\circ\right)$ yields the result. 
\end{proof}

\begin{remark}
In Theorem~\ref{Qf_theorem}, if $Q^\prime\left(\|f\|_{L^1(\mu,K)}\right)= 0$, we obtain:
$$\int_{DK}g_{\mu,f}(K,x)dx\leq \|f\|_{L^1(\mu,K)}\vol_n\left(DK\right).$$
\end{remark}

\begin{corollary}
Suppose $Q:(0, \infty)\rightarrow \R$ is an invertible, increasing, differentiable function such that $\lim_{r\to 0^+}Q(r)\in [-\infty,\infty).$ Let $K\in\conbod$ and $\mu\in\Lambda$ be such that $\mu(K) > 0$ and $\mu$ is $Q$-concave with $\mu$ having a locally Lipschitz density. Then, if $Q^\prime\left(\mu(K)\right)\neq 0$, one has:
\begin{equation}\label{e:g_infty_main_ineq}
    \vol_n(K)\leq \frac{n\vol_n\left(\left(\Pi_{\mu}K-\eta_{\mu,K}\right)^\circ\right)}{\mu(K)(Q^\prime(\mu(K)))^n} \int_0^{\infty}Q^{-1}\left(Q(\mu(K))-t\right)t^{n-1}dt,
\end{equation}
and, if $Q^\prime\left(\mu(K)\right)=0$, one merely obtains $\vol_n(K)<\vol_n(DK).$
\label{Q_theorem}
\end{corollary}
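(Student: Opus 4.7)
The plan is to derive Corollary~\ref{Q_theorem} as a direct specialization of Theorem~\ref{Qf_theorem}, applied to the characteristic function $f = \chi_K$. First I would check the hypotheses: $\chi_K$ is a bounded, non-negative, almost-everywhere differentiable function (differentiable on $\R^n \setminus \partial K$), and $\|\chi_K\|_{L^1(\mu,K)} = \mu(K) \in (0,\infty)$ by assumption. The concavity hypothesis "$Q \circ g_{\mu,\chi_K}(K,\cdot) = Q \circ g_{\mu,K}$ is concave" follows from Lemma~\ref{l:covario_concave} applied to the class $\mathcal{C}$ of compact convex subsets of $\R^n$ (which is closed under $K \mapsto K \cap (K+x)$), together with the assumption that $\mu$ is $Q$-concave and $Q$ is increasing.

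Next I would unwind the special values that Theorem~\ref{Qf_theorem} produces when $f = \chi_K$. By definition, $g_{\mu,\chi_K}(K,x) = \int_{K \cap (K+x)} \chi_K(y - x)\phi(y)\,dy = \mu(K \cap (K+x)) = g_{\mu,K}(x)$, so $\Pi_{\mu,K}\chi_K = \Pi_\mu K$. The correction vector becomes
\[
\tau_{\mu,\chi_K,K} = \frac{1}{2}\int_K \bigl(\chi_K(y)\nabla\phi(y) - \phi(y)\nabla\chi_K(y)\bigr)\,dy = \frac{1}{2}\int_K \nabla\phi(y)\,dy = \eta_{\mu,K},
\]
since $\nabla\chi_K$ vanishes on $\mathrm{int}(K)$. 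Thus Theorem~\ref{Qf_theorem} gives
\[
\int_{DK} g_{\mu,K}(x)\,dx \leq \frac{n\,\vol_n\bigl((\Pi_\mu K - \eta_{\mu,K})^\circ\bigr)}{[Q'(\mu(K))]^n} \int_0^\infty Q^{-1}\bigl(Q(\mu(K)) - t\bigr) t^{n-1}\,dt.
\]

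The remaining ingredient is the identity $\int_{DK} g_{\mu,K}(x)\,dx = \vol_n(K)\mu(K)$, which is exactly Example~\ref{int_g_mu}; for completeness it comes from Fubini: $\int_{DK} \int_{K \cap (K+x)} \phi(y)\,dy\,dx = \int_K \phi(y) \vol_n((y-K) \cap DK)\,dy$, and since $y \in K$ forces $y - K \subset DK$, this equals $\vol_n(K)\mu(K)$. Dividing through by $\mu(K) > 0$ yields \eqref{e:g_infty_main_ineq}. For the degenerate case $Q'(\mu(K)) = 0$, I would invoke the remark immediately following Theorem~\ref{Qf_theorem} to obtain $\vol_n(K)\mu(K) \leq \mu(K)\vol_n(DK)$, and then use the Brunn–Minkowski inequality $\vol_n(DK) \geq 2^n \vol_n(K) > \vol_n(K)$ to upgrade to a strict inequality.

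There is no real obstacle here; the only bookkeeping point worth double-checking is that the hypothesis "$Q \circ g_{\mu,f}(K,\cdot)$ concave" in Theorem~\ref{Qf_theorem} is automatically supplied by $Q$-concavity of $\mu$ via Lemma~\ref{l:covario_concave}, so the corollary is essentially just a repackaging of the theorem in the special case $f = \chi_K$.
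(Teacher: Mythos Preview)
Your proposal is correct and follows exactly the paper's route: apply Theorem~\ref{Qf_theorem} with $f=\chi_K$, invoke Lemma~\ref{l:covario_concave} for the concavity hypothesis, and use Example~\ref{int_g_mu} to evaluate $\int_{DK} g_{\mu,K}(x)\,dx$. The paper's proof is a one-line reference to precisely these three ingredients; your version simply spells out the bookkeeping (that $\tau_{\mu,\chi_K,K}=\eta_{\mu,K}$, that $\Pi_{\mu,K}\chi_K=\Pi_\mu K$, and the degenerate-case strictness via Brunn--Minkowski) that the paper leaves implicit.
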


\begin{proof}
From Lemma~\ref{l:covario_concave}, one has that $g_{\mu,K}$ is also $Q$-concave. Thus, set $f=\chi_K$ in Theorem~\ref{Qf_theorem} and use \eqref{int_g_mu}.
\end{proof}
\noindent We can now obtain results for log-concave measures. We first need the following lemma.

\begin{lemma}
\label{l:log}
Let $\mu$ be a locally finite, regular, log-concave Borel measure on $\R^n.$ Then, $\mu$ has a locally Lipschitz density.
\end{lemma}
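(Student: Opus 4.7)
The plan is to invoke Borell's classical characterization of log-concave measures. Specifically, Borell showed that a locally finite, regular log-concave Borel measure $\mu$ on $\R^n$ is either supported on a proper affine subspace or has a density $\phi$ with respect to Lebesgue measure of the form $\phi = e^{-V}$, where $V \colon \R^n \to (-\infty,\infty]$ is a convex function whose effective domain $\{V < \infty\}$ coincides (up to a Lebesgue-null set) with $\mathrm{supp}(\phi)$. Since $\mu \in \Lambda$ by assumption, the degenerate case is excluded and we may work with this representation.

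Next, I would appeal to the standard fact from convex analysis that every convex function $V \colon \R^n \to (-\infty,\infty]$ is locally Lipschitz on the interior of its effective domain. The usual proof is a two-step argument: first, $V$ is bounded above on any sufficiently small closed cube contained in $\mathrm{int}(\{V<\infty\})$, because its maximum on the cube is attained at a vertex by convexity; second, combining this upper bound with convexity (writing a point inside a smaller concentric ball as a convex combination of a point on the boundary of the cube and another reference point) produces a Lipschitz estimate on the smaller ball. Iterating on compact sets yields local Lipschitz continuity throughout $\mathrm{int}(\{V<\infty\})$.

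Finally, the exponential function is smooth, hence Lipschitz on any compact interval. Since $V$ is bounded below on compact subsets of $\mathrm{int}(\{V<\infty\})$ (by local Lipschitz continuity together with finiteness at any reference point) and bounded above there (by the argument in the previous paragraph), the composition $\phi = e^{-V}$ is locally Lipschitz on $\mathrm{int}(\mathrm{supp}(\phi))$. This matches precisely the convention adopted in Section~\ref{preliminaries}, where a function is called locally Lipschitz when it is Lipschitz on compact subsets of the interior of its support.

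There is no substantive obstacle here: the entire proof is a direct assembly of Borell's theorem with a textbook convex-analysis fact. The only point that deserves a careful sentence is the clarification that ``locally Lipschitz density'' in the sense of this paper is measured on compact subsets of the interior of the support, so no Lipschitz behaviour is being claimed across the boundary where $\phi$ jumps to zero.
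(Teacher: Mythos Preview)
Your proposal is correct and follows essentially the same route as the paper: invoke Borell's characterization to write the density as $e^{-V}$ with $V$ convex, use that convex functions are locally Lipschitz on the interior of their domain, and then transfer this to $\phi$ via the local Lipschitz behaviour of the exponential (the paper does this last step with the explicit bound $|e^{-a}-e^{-b}|\le e^{-\min\{a,b\}}|a-b|$, which is just a concrete version of your ``exponential is Lipschitz on compact intervals'' observation). Your treatment of the degenerate-support case and your remark on the paper's ``locally Lipschitz on the support'' convention are, if anything, more careful than the paper's own presentation.
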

\begin{proof}
From Borell's classification on concave measures \cite{Bor}, a locally finite and regular Borel measure is log-concave on Borel subsets of $\R^n$ if, and only if,  $\mu$ has a density $\phi(x)$ that is log-concave, i.e. $\phi(x)=Ae^{-\psi(x)},$ where $A>0$ and $\psi:\R^n\to\R^+$ is convex. It is well known that if a function is convex, then it is locally Lipschitz and finite on compact sets (see e.g. \cite{wayne}). Consider a compact set $U \subset \R^n$ and two vectors  $\xi_1, \xi_2\in U.$ One can readily check that $|e^{-a}-e^{-b}|\le e^{-\min\{a,b\}}|a-b|$ for any $a,b \in \R.$ Consequently, we get, with $M=e^{-\min_{x\in U}\psi(x)},$
$$
|\phi(\xi_1)-\phi(\xi_2)|=|Ae^{-\psi(\xi_1)}-Ae^{-\psi(\xi_2)}|\le
MA|\psi(\xi_1)-\psi(\xi_2)|\le MA\|\psi\|_{\text{Lip}}|\xi_1-\xi_2|.
$$
\end{proof}

\begin{theorem} \label{t:log}
Let $K\in\conbod$ and $\mu\in\Lambda$ a log-concave measure such that $\mu(K) >0.$ Then,
\begin{equation*}
    \frac{1}{n!} \leq \frac{\mu^{n}(K) \vol_n\left(\left(\Pi_{\mu}K-\eta_{\mu,K}\right)^\circ\right)}{\vol_n(K)}.
\end{equation*}
\end{theorem}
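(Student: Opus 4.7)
The plan is to derive Theorem~\ref{t:log} as a direct specialization of Corollary~\ref{Q_theorem} with the choice $Q(x)=\log x$. To apply that corollary, I need to verify that its hypotheses hold for any log-concave $\mu\in\Lambda$ with $\mu(K)>0$: namely that $\mu$ is $Q$-concave, that $\mu$ admits a locally Lipschitz density, that $\lim_{r\to 0^+}Q(r)\in[-\infty,\infty)$, and that $Q'(\mu(K))\neq 0$. The first is the very definition of log-concavity, the second is Lemma~\ref{l:log}, the third holds because $\log r\to -\infty$ as $r\to 0^+$, and the fourth follows from $Q'(x)=1/x$ together with the assumption $\mu(K)\in(0,\infty)$ (finiteness comes from $\mu$ being locally finite and $K$ compact).

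Next, I would simply evaluate the right-hand side of \eqref{e:g_infty_main_ineq} with $Q(x)=\log x$. One has $Q^{-1}(y)=e^y$, so
\begin{equation*}
\int_0^\infty Q^{-1}\bigl(Q(\mu(K))-t\bigr)t^{n-1}\,dt
=\mu(K)\int_0^\infty e^{-t}t^{n-1}\,dt
=\mu(K)\,\Gamma(n)=\mu(K)(n-1)!,
\end{equation*}
while $Q'(\mu(K))^n=\mu(K)^{-n}$. Substituting into Corollary~\ref{Q_theorem} gives
\begin{equation*}
\vol_n(K)\leq \frac{n\,\vol_n\!\left((\Pi_\mu K-\eta_{\mu,K})^\circ\right)}{\mu(K)\cdot \mu(K)^{-n}}\cdot \mu(K)(n-1)!
= n!\,\mu^n(K)\,\vol_n\!\left((\Pi_\mu K-\eta_{\mu,K})^\circ\right),
\end{equation*}
which upon dividing by $n!\,\vol_n(K)$ yields exactly the claimed inequality.

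There is essentially no obstacle here — all the analytic work has already been done in building Corollary~\ref{Q_theorem} (which in turn relied on Theorem~\ref{Qf_theorem}, Lemma~\ref{l:covario_concave}, and Lemma~\ref{l:concave}). The only sanity check worth including in the write-up is the applicability of Lemma~\ref{l:log}, which guarantees that a locally finite log-concave measure on $\R^n$ automatically has a locally Lipschitz density — this is what allows the use of the $\mu$-brightness formula underpinning Corollary~\ref{Q_theorem}. The computation of the Gamma integral $\int_0^\infty e^{-t}t^{n-1}dt=(n-1)!$ is the only concrete calculation, and it is what produces the constant $1/n!$ in the final bound.
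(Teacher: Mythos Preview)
Your proposal is correct and matches the paper's own proof essentially line for line: both invoke Lemma~\ref{l:log} to guarantee a locally Lipschitz density, then apply Corollary~\ref{Q_theorem} with $Q(x)=\log x$ and evaluate $\int_0^\infty e^{-t}t^{n-1}\,dt=(n-1)!$. Your write-up is in fact more careful than the paper's in explicitly checking the remaining hypotheses ($Q'(\mu(K))\neq 0$, the limit at $0^+$), but the argument is the same.
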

\begin{proof}
From Lemma~\ref{l:log}, $\mu$ has locally Lipschitz density. Setting $Q(x)=\ln(x)$ in Corollary~\ref{Q_theorem} then yields
\begin{align*}
    \vol_n(K)\leq n \vol_n\left(\left(\Pi_{\mu}K-\eta_{\mu,K}\right)^\circ\right)\mu^n(K)\int_0^{\infty}e^{-t}t^{n-1}dt.
\end{align*}
\end{proof}

\begin{remark}
The Lebesgue measure is log-concave, but, since $\frac{1}{n!}\leq\frac{1}{n^n}{2n \choose n}$, Theorem~\ref{t:log} is weaker than \eqref{e:Zhang_ineq} for the Lebesgue measure.
\end{remark}

For the remainder of this section, we concern ourselves with the Gaussian measure.  We need the following Gaussian analogue of the Brunn-Minkowski inequality, the Ehrhard inequality: 
For $0<t<1,$ Borel sets $K$ and $L$ in $\R^{n}$, and the Gaussian measure $\gamma_n$, we have
\begin{equation}\label{e:Ehrhard_ineq}
\Phi^{-1}\left(\gamma_{n}((1-t) K+tL)\right) \geq(1-t) \Phi^{-1}\left(\gamma_{n}(K)\right)+\lambda \Phi^{-1}\left(\gamma_{n}(L)\right),
\end{equation}
i.e. $\Phi^{-1}\circ\gamma_n$ is concave, where $\Phi(x)=\gamma_{1}((-\infty, x))$. It was first proven by Ehrhard for the case of two closed, convex sets (\cite{EHR1,EHR2}).  Latala \cite{Lat} generalized Ehrhard’s result to the case of arbitrary Borel K and convex L; the general case for two Borel sets of the Ehrhard's inequality was proven by Borell \cite{Bor3}.

%with equality when $\gamma_{n}(K) \gamma_{n}(L)>0$ if, and only if, $K=\R^{n}, L=\R^{n}, K=L$, or both $K$ and $L$ are half-spaces, one contained in the other. 

Since $\Phi$ is strictly log-concave, the above further implies that the Gaussian measure is $\log$-concave (this also follows directly from the Pr{\'e}kopa-Leindler inequality  \cite{PreL1,PreL2}), i.e.
for $0<t<1$ and closed convex sets $K$ and $L$ in $\mathbb{R}^{n}$,
\begin{equation}\label{e:Log_gamma_concave}
\gamma_{n}((1-t) K+tL) \geq \gamma_{n}(K)^{1-t} \gamma_{n}(L)^{t}.
\end{equation}

\noindent We can immediately apply \eqref{e:Log_gamma_concave} with Theorem~\ref{t:log} to get that for $K\in\conbod_0$ we have
$$\frac{1}{n!} \leq \frac{\gamma_n^{n}(K) \vol_n\left(\left(\Pi_{\gamma_n}K-\eta_{\gamma_n,K}\right)^\circ\right)}{\vol_n(K)}.$$
But we can use \eqref{e:Ehrhard_ineq} to get a sharper bound. In what follows, it will be more convenient to consider the reciprocal of the above, namely
\begin{equation}\label{eq:loggaus}
\frac{\vol_n(K)}{\gamma^n_n(K)\vol_n\left(\left(\Pi_{\gamma_n} K-\eta_{\gamma_n,K}\right)^\circ\right)}\leq n!.
\end{equation}

\begin{corollary}
\label{cor:applying_Ehrhardt}
Consider $K\in\conbod$ and set $ x=\Phi^{-1}(\gamma_n(K)).$ Then, one has
\begin{equation}\label{e:applying_Ehrhardt}
    \frac{\vol_n(K)}{\gamma^n_n(K)\vol_n\left(\left(\Pi_{\gamma_n} K-\eta_{\gamma_n,K}\right)^\circ\right)}\leq \frac{e^{-nx^2/2}}{(2\pi\Phi(x)^2)^{(n+1)/2}}\int_{0}^{\infty} z^n e^{-(z-x)^2/2}dz.
\end{equation}
\end{corollary}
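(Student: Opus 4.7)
The plan is to apply Corollary~\ref{Q_theorem} with the choice $Q=\Phi^{-1}$. The Ehrhard inequality \eqref{e:Ehrhard_ineq} states precisely that $\gamma_n$ is $\Phi^{-1}$-concave on the class of Borel sets --- a class closed under intersection, as required for Lemma~\ref{l:covario_concave} --- and $\gamma_n$ has a locally Lipschitz density. With $\mu=\gamma_n$ and $\mu(K)=\Phi(x)$, we have $Q(\mu(K))=x$ and $Q^{-1}(x-t)=\Phi(x-t)$. Using $(\Phi^{-1})'(y)=1/\phi_1(\Phi^{-1}(y))$, where $\phi_1(s)=(2\pi)^{-1/2}e^{-s^2/2}$, we compute
$$Q'(\gamma_n(K))=\frac{1}{\phi_1(x)}=\sqrt{2\pi}\,e^{x^2/2},\qquad \bigl(Q'(\gamma_n(K))\bigr)^n=(2\pi)^{n/2}e^{nx^2/2}.$$
Substituting into Corollary~\ref{Q_theorem} yields
$$\vol_n(K)\leq \frac{n\,\vol_n\bigl((\Pi_{\gamma_n}K-\eta_{\gamma_n,K})^\circ\bigr)}{\gamma_n(K)(2\pi)^{n/2}e^{nx^2/2}}\int_0^\infty \Phi(x-t)\,t^{n-1}\,dt.$$

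Next, I would evaluate the integral by a single integration by parts with $u=\Phi(x-t)$, $dv=t^{n-1}\,dt$; the boundary terms vanish ($t^n/n$ kills the term at $0$, and the Gaussian decay of $\Phi(x-t)$ kills it at $+\infty$), producing
$$\int_0^\infty \Phi(x-t)\,t^{n-1}\,dt=\frac{1}{n\sqrt{2\pi}}\int_0^\infty z^n\,e^{-(z-x)^2/2}\,dz.$$
Combining with the previous inequality gives
$$\frac{\vol_n(K)}{\gamma_n(K)\,\vol_n\bigl((\Pi_{\gamma_n}K-\eta_{\gamma_n,K})^\circ\bigr)}\leq \frac{e^{-nx^2/2}}{(2\pi)^{(n+1)/2}}\int_0^\infty z^n\,e^{-(z-x)^2/2}\,dz.$$

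Finally, to produce the exact form of \eqref{e:applying_Ehrhardt}, I would divide both sides by $\gamma_n(K)^{n-1}=\Phi(x)^{n-1}$ (to promote the single $\gamma_n(K)$ on the left to $\gamma_n^n(K)$) and then --- using $\Phi(x)\leq 1$ --- weaken $\Phi(x)^{n-1}$ to $\Phi(x)^{n+1}$ in the denominator on the right, assembling $(2\pi\Phi(x)^2)^{(n+1)/2}$. I do not expect any serious obstacle: the applicability of Corollary~\ref{Q_theorem} with $Q=\Phi^{-1}$ (whose domain $(0,1)$ equals $(0,\gamma_n(\R^n))$) is immediate, and the only genuine computation is the one-line integration by parts; the remainder is bookkeeping of the Gaussian normalizing constants.
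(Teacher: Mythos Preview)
Your overall strategy --- apply Corollary~\ref{Q_theorem} with $Q=\Phi^{-1}$ via Ehrhard, then simplify the resulting integral --- is exactly the paper's approach, and your integration by parts is equivalent to the Fubini-then-substitute computation the paper carries out. However, there is a bookkeeping slip in the displayed inequality after ``Combining with the previous inequality gives''. From Corollary~\ref{Q_theorem} one has a factor $\mu(K)=\gamma_n(K)$ in the denominator on the right; when you move a $\gamma_n(K)$ to the left denominator you are dividing both sides by $\gamma_n(K)$, which introduces a \emph{second} factor of $\gamma_n(K)^{-1}$ on the right. The correct intermediate inequality is
\[
\frac{\vol_n(K)}{\gamma_n(K)\,\vol_n\bigl((\Pi_{\gamma_n}K-\eta_{\gamma_n,K})^\circ\bigr)}\leq \frac{e^{-nx^2/2}}{\Phi(x)^{2}(2\pi)^{(n+1)/2}}\int_0^\infty z^n\,e^{-(z-x)^2/2}\,dz,
\]
with an extra $\Phi(x)^{-2}$ that you dropped. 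As written, your displayed inequality is \emph{stronger} than what Corollary~\ref{Q_theorem} actually delivers and is not justified.

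Once this is corrected, your final ``weakening'' step becomes unnecessary: dividing the corrected inequality by $\gamma_n(K)^{n-1}=\Phi(x)^{n-1}$ produces $\Phi(x)^{n+1}$ in the denominator on the nose, and the stated bound \eqref{e:applying_Ehrhardt} follows as an exact consequence rather than via the crude estimate $\Phi(x)\le 1$. This matters because the paper later uses this corollary to argue that Ehrhard gives a \emph{strictly sharper} bound than log-concavity; an extraneous weakening would muddy that comparison.
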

\begin{proof}
From \eqref{e:Ehrhard_ineq}, we have that $\Phi^{-1}\circ\gamma_n$ is concave on Borel sets, and so we have that $\Phi^{-1}\circ g_{{\gamma_n},K}$ is concave on $\conbod$. Hence, we can apply Corollary~\ref{Q_theorem} with $\mu=\gamma_n$ and $Q=\Phi^{-1}.$ From rudimentary inverse function theory, we have
$$
\diff{}{x}\Phi^{-1}(x)\bigg|_{x=\gamma_n(K)}=\left[\diff{}{y}\Phi(y)\bigg|_{y=\Phi^{-1}(\gamma_n(K))}\right]^{-1}.
$$ By setting $\Phi(y)=(2\pi)^{-1/2}\int_{-\infty}^{y}e^{-r^2/2}dr$, we obtain
\begin{align*}
    \vol_n(K)\leq\frac{n\vol_n\left(\left(\Pi_{\gamma_n} K-\eta_{\gamma_n,K}\right)^\circ\right)}{(2\pi)^{n/2}\gamma_n(K)}e^{-n(\Phi^{-1}(\gamma_n(K))^2/2}\int_{0}^{\infty} \Phi[\Phi^{-1}(\gamma_n(K))-t] t^{n-1} d t.
\end{align*}
For simplicity, we set $x=\Phi^{-1}\left(\gamma_n(K)\right).$ Next, explicitly inserting $\Phi(x-t)$ yields
\begin{align*}
    \frac{\vol_n(K)}{\gamma^{n}_n(K)\vol_n\left(\left(\Pi_{\gamma_n} K-\eta_{\gamma_n,K}\right)^\circ\right)}\leq\frac{n}{\gamma^{n+1}_n(K)(2\pi)^{(n+1)/2}}e^{-nx^2/2}\int_{0}^{\infty} \int_{-\infty}^{x-t}e^{-y^2/2} t^{n-1}  dy d t.
\end{align*}
Using Fubini's yields
\begin{align*}
    \frac{\vol_n(K)}{\gamma^{n}_n(K)\vol_n\left(\left(\Pi_{\gamma_n} K-\eta_{\gamma_n,K}\right)^\circ\right)}&\leq\frac{n}{\gamma^{n+1}_n(K)(2\pi)^{(n+1)/2}}e^{-nx^2/2}\int_{-\infty}^{x}\int_{0}^{x-y} e^{-y^2/2} t^{n-1}  dt dy
    \\
    &=\frac{e^{-nx^2/2}}{\gamma^{n+1}_n(K)(2\pi)^{(n+1)/2}}\int_{-\infty}^{x} e^{-y^2/2} (x-y)^n  dy.
\end{align*}
Letting $z=x-y$ yields the result.
\end{proof}

We would like now to verify that the right-hand side of \eqref{e:applying_Ehrhardt} is sharper than \eqref{eq:loggaus} by showing that, in general, a stronger concavity assumption implies a stronger Zhang's type inequality in Theorem~\ref{Qf_theorem}. Indeed, one has that
$$\frac{\vol_n(K)}{\gamma^n_n(K)\vol_n\left(\left(\Pi_{\gamma_n} K-\eta_{\gamma_n,K}\right)^\circ\right)}\leq \frac{e^{-nx^2/2}}{(2\pi\Phi(x)^2)^{(n+1)/2}}\int_{0}^{\infty} z^n e^{-(z-x)^2/2}dz\leq n!; \quad x=\Phi^{-1}(\gamma_n(K)),$$
where the first inequality is a restatement of Corollary~\ref{cor:applying_Ehrhardt} and the second inequality follows from the proposition below, by setting $a=\gamma_n(K),$ $F=\Phi^{-1},$ $Q=\ln$, and multiplying through by $\frac{n}{\gamma^{n+1}_n(K)}$. Taking reciprocal then yields
$$\frac{1}{n!}\leq \frac{e^{nx^2/2}(2\pi\Phi(x)^2)^{(n+1)/2}}{\int_{0}^{\infty} z^n e^{-(z-x)^2/2}dz} \leq  \frac{\gamma^n_n(K)\vol_n\left(\left(\Pi_{\gamma_n} K-\eta_{\gamma_n,K}\right)^\circ\right)}{\vol_n(K)}; \quad x=\Phi^{-1}(\gamma_n(K)). $$
\begin{proposition}
Consider two invertible, increasing, differentiable functions $Q, F: (0,\infty)\rightarrow\R$  such that $Q(F^{-1})$ is concave. Then, for all $a>0$:
$$
  \frac{1}{(F^\prime(a))^n} \int_0^{\infty}F^{-1}\left(F(a)-t\right)t^{n-1}dt \le    \frac{1}{(Q^\prime(a))^n} \int_0^{\infty}Q^{-1}\left(Q(a)-t\right)t^{n-1}dt.
$$
\label{pr:comparison}
\end{proposition}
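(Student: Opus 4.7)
The plan is to exploit the concavity of $g := Q \circ F^{-1}$ by making the substitution implicit in it visible, then applying monotonicity and a change of variables. First I would set, for $t \geq 0$,
\[
\phi(t) = g(F(a) - t) = Q\bigl(F^{-1}(F(a)-t)\bigr),
\]
so that $\phi$ is concave as a composition of the concave function $g$ with the affine map $t \mapsto F(a) - t$. Note that $\phi(0) = Q(a)$, and by the chain rule together with the inverse function theorem,
\[
\phi'(0) = -g'(F(a)) = -\frac{Q'(a)}{F'(a)}.
\]
The concavity of $\phi$ (i.e. $\phi(t) \le \phi(0) + \phi'(0)t$) then gives, for every $t \ge 0$,
\[
Q\bigl(F^{-1}(F(a)-t)\bigr) \le Q(a) - \frac{Q'(a)}{F'(a)}\, t,
\]
and since $Q$ is increasing (so $Q^{-1}$ is increasing), applying $Q^{-1}$ yields the pointwise estimate
\[
F^{-1}(F(a)-t) \le Q^{-1}\!\left(Q(a) - \frac{Q'(a)}{F'(a)}\,t\right).
\]

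Next I would multiply by $t^{n-1}$ and integrate over $t \in (0,\infty)$. On the right-hand side, applying the substitution $s = (Q'(a)/F'(a))\, t$ (which is valid since both derivatives are positive) converts the integral to
\[
\int_0^{\infty} Q^{-1}\!\left(Q(a) - \frac{Q'(a)}{F'(a)}\,t\right) t^{n-1}\, dt = \left(\frac{F'(a)}{Q'(a)}\right)^{\!n} \int_0^{\infty} Q^{-1}(Q(a) - s)\, s^{n-1}\, ds.
\]
Combining the two displays and dividing both sides by $(F'(a))^n$ gives precisely the claimed inequality.

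The only non-routine point—and the main obstacle to finalize—is making sense of the inequality $F^{-1}(F(a) - t) \le Q^{-1}\bigl(Q(a) - (Q'(a)/F'(a))t\bigr)$ when $t$ is so large that one of the arguments falls outside the natural range of $F$ or $Q$. This is handled exactly as in the conventions used throughout Section~\ref{sec:log} for Theorem~\ref{Qf_theorem}: since $Q$ and $F$ are increasing on $(0,\infty)$ with limits $\lim_{r\to 0^+} Q(r), \lim_{r\to 0^+} F(r) \in [-\infty,\infty)$, we extend $F^{-1}$ and $Q^{-1}$ by $0$ below these limits, which keeps both integrands non-negative and preserves the pointwise inequality (the left side becomes $0$ before the right side does, because the concavity-based linear majorant lies above $\phi$). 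With this convention in place, the monotonicity step and the change of variables go through unchanged, completing the proof.
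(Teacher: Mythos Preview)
Your proof is correct and follows essentially the same approach as the paper: both reduce the inequality to the pointwise estimate obtained from concavity of $Q\circ F^{-1}$ along the line through $F(a)$, then use a linear change of variables to pass between the two integrals. The only cosmetic difference is the order of operations---the paper first substitutes $t\mapsto F'(a)t$ and $t\mapsto Q'(a)t$ to normalize, then proves the pointwise bound, whereas you prove the pointwise bound first and substitute afterward.
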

\begin{proof} The inequality in Proposition~\ref{pr:comparison} is
equivalent to
$$
 \int_0^{\infty}F^{-1}\left(F(a)-F'(a)t\right)t^{n-1}dt \le     \int_0^{\infty}Q^{-1}\left(Q(a)-Q'(a)t\right)t^{n-1}dt.
$$
Thus, it is sufficient to prove that 
\begin{equation}\label{eq:cocavitycomp}
 F^{-1}\left(F(a)-F'(a)t\right)\le Q^{-1}\left(Q(a)-Q'(a)t\right), \,\,\,\, \forall \, t \ge 0,
\end{equation}
or 
$$
Q (F^{-1})\left(F(a)-F'(a)t\right)\le Q(a)-Q'(a)t , \,\,\,\, \forall \, t \ge 0.
$$
Let $G(t)=Q (F^{-1})\left(F(a)-F'(a)t\right)$.
We notice that $G(t)$ is a concave function with $G(0)=Q(a)$ and $G'(0)=-Q'(a)$, which  proves \eqref{eq:cocavitycomp}.
\end{proof}

\section{Zhang's Inequality for Two Measures}
\label{super_zhang_sec}
We now work towards a version of Zhang's inequality for  $\Pi_{\mu,K} f-\tau_{\mu,f,K}.$  We start by defining an analogue of \eqref{e:mu_average} for a function and two different measures:
\begin{definition}
Suppose $\mu$ and $\nu$ are Borel measures on $\R^n$. The $\nu$-translation of an integrable function $f\in L^1(\mu,K)$, K a compact, non-empty set, averaged with respect to $\mu$, is given by
$$\nu_{\mu}(f,K)=\frac{1}{\|f\|_{L^1(\mu,K)}}\int_{DK}g_{\mu,f}(K,x)d\nu(x).$$
In the case when $f=\chi_K$, $K\in\conbod$, we say the $\nu$-translation of a convex body $K$ averaged with respect to $\mu$ is given by
$$
\nu_{\mu}(K)=\frac{1}{\mu(K)}\int_{DK}g_{\mu,K}(x)d\nu(x)=\frac{1}{\mu(K)}\int_{K}\nu(y-K)d\mu(y),
$$
where the far right equality follows analogously to Example~\ref{int_g_mu}.
\label{nu_mu_avg}
\end{definition} 
We see that Definition~\ref{nu_mu_avg} serves as a generalization Definition~\ref{int_g_mu} and \eqref{e:mu_average}. Notice Definition~\ref{nu_mu_avg} and \eqref{e:mu_set_incl} yields, for $\nu$ a Borel measure and $\mu\in\Lambda$ that is $F$-concave, where $F$ is an increasing, non-negative, differentiable function, that
	\begin{equation*}\label{eq:av2}
	    \nu_\mu(K)\leq \nu(DK) \leq \nu\left(\frac{F(\mu(K))}{F^\prime(\mu(K))}\left(\Pi_{\mu}K-\eta_{\mu,K}\right)^\circ\right).
	\end{equation*}
To obtain sharper constants, we shall again suppose that $\nu\in\Lambda_{\text{rad}}.$ We now state the main result of this section.
\begin{theorem}[Functional Version of Zhang's Inequality for Two Measures]
Consider $K\in\conbod$, $\mu\in\Lambda$, $\nu\in\Lambda_{\text{rad}}$, such that the density of $\mu$ is locally Lipschitz. Let $f:\R^n \to \R^{+}$ be a differentiable almost everywhere and bounded function in $L^1(\mu,K)\cap L^1(\mu,\partial K)$. Suppose $F$ is an increasing, non-negative, and invertible function such that $F\circ g_{\mu,f}$ is concave. Then,
$$
    \|f\|_{L^1(\mu,K)}\nu_{\mu}(f)\leq n \nu\left(\frac{F(\|f\|_{L^1(\mu,K)})}{F^\prime(\|f\|_{L^1(\mu,K)})}\left(\Pi_{\mu,K} f-\tau_{\mu,f,K}\right)^\circ\right)
    \left(\int_{0}^1F^{-1}\left(F(\|f\|_{L^1(\mu,K)})t\right)(1-t)^{n-1}dt\right).
$$
Equality occurs if, and only if, the following are true:
\begin{enumerate}
    \item If $\varphi$ is the density of $\nu,$ then, for each $\theta\in\s^{n-1}$, $\varphi(r\theta)$ is independent of $r,$
    \item for each $\theta\in\s^{n-1},$ $F\circ g_{\mu,f}(K,r\theta)$ is an affine function in the variable $r$ for $r\in[0,\rho_{DK}(\theta)]$, and
    \item  $f$ is projective with respect to $\mu$ and $K$ such that $$DK=\frac{F(\|f\|_{L^1(\mu,K)})}{F^\prime(\|f\|_{L^1(\mu,K)})}\Pi^\circ_{\mu,K}f.$$
\end{enumerate}

\label{Ff_theorem}
\end{theorem}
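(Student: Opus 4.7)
The plan is to apply Lemma~\ref{t:chak} with the concave test function $f_0(x) := F\circ g_{\mu,f}(K,x)$ and the increasing map $q := F^{-1}$. By hypothesis $f_0$ is a non-negative concave function, and since $f\geq 0$ its support coincides with $DK$. Theorem~\ref{f_brightness} together with the chain rule yields
\[
\frac{d}{dr}f_0(r\theta)\bigg|_{r=0} = -F'\bigl(\|f\|_{L^1(\mu,K)}\bigr)\, h_{\Pi_{\mu,K} f - \tau_{\mu,f,K}}(\theta),
\]
which is non-positive for every $\theta \in \s^{n-1}$, since $F$ is increasing and $h_{\Pi_{\mu,K} f - \tau_{\mu,f,K}}\geq 0$ by the remark following Theorem~\ref{f_brightness}. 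Hence $f_0$ attains its maximum at the origin, and Lemma~\ref{t:chak} applies with
\[
z(\theta) = \frac{F(\|f\|_{L^1(\mu,K)})}{F'(\|f\|_{L^1(\mu,K)})}\rho_{(\Pi_{\mu,K} f - \tau_{\mu,f,K})^\circ}(\theta), \quad \beta = n\int_0^1 F^{-1}\bigl(F(\|f\|_{L^1(\mu,K)})t\bigr)(1-t)^{n-1}\,dt.
\]

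Since $F^{-1}\circ f_0 = g_{\mu,f}(K,\cdot)$, the left-hand side of Lemma~\ref{t:chak} becomes $\int_{DK}g_{\mu,f}(K,x)\,d\nu(x) = \|f\|_{L^1(\mu,K)}\nu_\mu(f,K)$, while the right-hand side, by the polar coordinate representation of $\nu$ integrated over the star body with radial function $z(\theta)$, equals $\beta \cdot \nu\bigl(\tfrac{F(\|f\|_{L^1(\mu,K)})}{F'(\|f\|_{L^1(\mu,K)})}(\Pi_{\mu,K} f - \tau_{\mu,f,K})^\circ\bigr)$. Combining these identifications produces the claimed inequality. For the equality conditions I would inherit from Lemma~\ref{t:chak} that (i) $\varphi(r\theta)$ is independent of $r$ for each $\theta$, (ii) $f_0(r\theta) = F\circ g_{\mu,f}(K,r\theta)$ is affine in $r$ on $[0,\rho_{DK}(\theta)]$, and (iii) $z(\theta) = \rho_{DK}(\theta)$ for all $\theta$. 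Condition (iii) rewrites as $\tfrac{F(\|f\|)}{F'(\|f\|)}(\Pi_{\mu,K} f - \tau_{\mu,f,K})^\circ = DK$, which forces $\tau_{\mu,f,K} = 0$: the body $DK$ is symmetric and $\Pi_{\mu,K} f$ is symmetric by its integral representation, so translating $\Pi_{\mu,K} f$ by $\tau_{\mu,f,K}$ preserves symmetry only when $\tau_{\mu,f,K}=0$. Thus $f$ is $\mu$-projective, and (iii) becomes $DK = \tfrac{F(\|f\|)}{F'(\|f\|)}\Pi_{\mu,K}^\circ f$, matching the stated equality characterization.

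The main obstacle is verifying applicability of Lemma~\ref{t:chak}, specifically that $f_0$ achieves its maximum at the origin; this reduces to the sign check on its radial derivative carried out above via Theorem~\ref{f_brightness}. The subtler piece is the equality analysis, where bridging Lemma~\ref{t:chak}'s shape condition on $\supp(f_0)$ to the explicit $\mu$-projective formulation in the theorem statement requires the additional symmetry argument for $\tau_{\mu,f,K}$ sketched above rather than a direct translation of hypotheses.
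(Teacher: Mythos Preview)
Your proposal is correct and follows essentially the same route as the paper: apply Lemma~\ref{t:chak} to the concave function $F\circ g_{\mu,f}(K,\cdot)$ with $q=F^{-1}$, compute $z(\theta)$ and $\beta$ via Theorem~\ref{f_brightness} and the chain rule, and identify the two sides; the equality analysis, including the symmetry argument forcing $\tau_{\mu,f,K}=0$, matches the paper's as well.
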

\begin{proof}
We shall invoke Lemma~\ref{t:chak}, with $L=DK$, $q=F^{-1}$ and the function $F\circ g_{\mu,f}(K,\cdot)$. So, we directly calculate: $F(g_{\mu,f}(K,0))=F(\|f\|_{L^1(\mu,K)})$ and $$\diff{F(g_{\mu,f}(K,r\theta))}{r}\bigg|_{r=0}=\diff{g_{\mu,f}(K,r\theta)}{r}\bigg|_{r=0}\diff{F(y)}{y}\bigg|_{y=g_{\mu,f}(K,0)}=-h_{\Pi_{\mu,K} f-\tau_{\mu,f,K}}(\theta)F^\prime(\|f\|_{L^1(\mu,K)}).$$
Then, we set $$z(\theta)=\frac{F(\|f\|_{L^1(\mu,K)})}{F^\prime(\|f\|_{L^1(\mu,K)})}\rho_{\left(\Pi_{\mu,K} f-\tau_{\mu,f,K}\right)^\circ}(\theta)=\rho_{\frac{F(\|f\|_{L^1(\mu,K)})}{F^\prime(\|f\|_{L^1(\mu,K)})}\left(\Pi_{\mu,K} f-\tau_{\mu,f,K}\right)^\circ}(\theta).$$
For the constant $\beta$, we have that
$$\beta=n\int_{0}^1(F^{-1}) \left(F(\|f\|_{L^1(\mu,K)}) t\right)(1-t)^{n-1}dt.$$ 
We now compute:

\begin{flalign*}
    &\|f\|_{L^1(\mu,K)}\nu_{\mu}(f)=\int_{DK}g_{\mu,f}(K,x)d\nu(x)=\int_{DK} F^{-1}\left[F\circ g_{\mu,f}(K,x)\right]d\nu(x)
    \\
    &\leq n\nu\left(\frac{F(\|f\|_{L^1(\mu,K)})}{F^\prime(\|f\|_{L^1(\mu,K)})}\left(\Pi_{\mu,K} f-\tau_{\mu,f,K}\right)^\circ\right)\int_{0}^1F^{-1}\left(F(\|f\|_{L^1(\mu,K)})t\right)(1-t)^{n-1}dt,
\end{flalign*}
where the inequality follows from equating $F^{-1}\left[F\circ g_{\mu,f}(K,\cdot)\right]$ with $q\circ f$ in Lemma~\ref{t:chak}. Thus, we have established our claim. The equality conditions are inherited from Lemma~\ref{t:chak}. The first two are immediate. As for the third, $\rho_{DK}(\theta)=z(\theta)$ implies $DK=\frac{F(\|f\|_{L^1(\mu,K)})}{F^\prime(\|f\|_{L^1(\mu,K)})}\left(\Pi_{\mu,K} f-\tau_{\mu,f,K}\right)^\circ.$ But, since $DK$ and $\Pi_{\mu,K} f$ are symmetric, this can only happen if $\tau_{\mu,f,K}=0.$
\end{proof}

We would like to remark that, in a similar manner to Proposition~\ref{pr:comparison},
 a stronger concavity assumption in Theorem~\ref{Ff_theorem} gives a stronger Zhang's Inequality for Two Measures:
\begin{proposition}
Consider two invertible, increasing, non-negative, differentiable functions $Q, F: (0,\infty)\rightarrow\R^+$ such that  $Q(F^{-1})$ is concave. Then, for a star-shaped body $L$ and for all $\nu\in\Lambda_{\text{rad}}$ and  $a>0$, one has
\begin{align*}
   \nu\left(\frac{F(a)}{F^\prime(a)} L\right)
    \int_{0}^1F^{-1}\left(F(a)t\right)(1-t)^{n-1}dt \le 
   \nu\left(\frac{Q(a)}{Q^\prime(a)} L\right)
    \int_{0}^1Q^{-1}\left(Q(a)t\right)(1-t)^{n-1}dt.
\end{align*}
\end{proposition}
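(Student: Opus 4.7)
The plan is to reduce to Proposition~\ref{pr:comparison} by treating the radial scaling factors and the $(1-t)^{n-1}$ integrals separately. Set $\alpha := F(a)/F'(a)$ and $\beta := Q(a)/Q'(a)$. I first show $\alpha \le \beta$: let $G := Q \circ F^{-1}$, which is concave by hypothesis. Since $F,Q$ are non-negative and increasing on $(0,\infty)$, they extend continuously to $[0,\infty)$ with values in $[0,\infty)$, so $G$ extends to a concave function with $G(0) \ge 0$. Concavity together with $G(0) \ge 0$ implies that $G(y)/y$ is non-increasing, so $G'(y) \le G(y)/y$; evaluating at $y = F(a)$ and using the chain rule $Q'(a) = G'(F(a))F'(a)$ rearranges to $\alpha \le \beta$.

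Using $\alpha/\beta \in (0,1]$ and that $\nu \in \Lambda_{\text{rad}}$ has radially non-decreasing density $\varphi$, polar integration gives
\begin{align*}
\alpha^{-n}\nu(\alpha L) &= \int_{\s^{n-1}}\int_0^{\rho_L(\theta)} \varphi(\alpha u\theta)\, u^{n-1}\,du\,d\theta \\
&\le \int_{\s^{n-1}}\int_0^{\rho_L(\theta)} \varphi(\beta u\theta)\, u^{n-1}\,du\,d\theta = \beta^{-n}\nu(\beta L),
\end{align*}
since $\varphi(\alpha u\theta) = \varphi((\alpha/\beta)(\beta u\theta)) \le \varphi(\beta u\theta)$. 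For the one-dimensional integrals, the substitution $s = \alpha(1-t)$ yields
$$\alpha^n \int_0^1 F^{-1}(F(a)t)(1-t)^{n-1}\,dt = \int_0^\alpha F^{-1}(F(a)-F'(a)s)\, s^{n-1}\,ds,$$
and analogously for $Q$ with $s = \beta(1-t)$. The pointwise estimate established in the proof of Proposition~\ref{pr:comparison}, namely $F^{-1}(F(a)-F'(a)s) \le Q^{-1}(Q(a)-Q'(a)s)$, is valid on $[0,\alpha] \subseteq [0,\beta]$ where both sides are defined and non-negative; extending the integration from $[0,\alpha]$ to $[0,\beta]$ (the $Q$-integrand remaining non-negative) gives
$$\alpha^n \int_0^1 F^{-1}(F(a)t)(1-t)^{n-1}\,dt \le \beta^n \int_0^1 Q^{-1}(Q(a)t)(1-t)^{n-1}\,dt.$$
Multiplying the two displayed inequalities, the factors $\alpha^{\pm n}$ and $\beta^{\pm n}$ cancel, yielding the claim.

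The main obstacle is really just the bookkeeping near the origin: both the pointwise comparison of $F^{-1}$ and $Q^{-1}$ and the concavity argument used to get $\alpha \le \beta$ require one to handle the behaviour of $G = Q \circ F^{-1}$ at $0$. The key observation is that $\alpha \le \beta$ is precisely what ensures the $F$-side stays within the common region of definition, so that extending the $Q$-integral to $[0,\beta]$ is legitimate and so that the radial monotonicity of $\varphi$ applies in the correct direction.
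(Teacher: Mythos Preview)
Your proof is correct and uses exactly the same ingredients as the paper: the inequality $\alpha = F(a)/F'(a) \le Q(a)/Q'(a) = \beta$ via concavity of $G = Q\circ F^{-1}$ with $G(0)\ge 0$, the pointwise estimate \eqref{eq:cocavitycomp} from Proposition~\ref{pr:comparison}, and the radial monotonicity of $\varphi$. The only organizational difference is that the paper combines the polar integral and the $(1-t)^{n-1}$ integral into a single triple integral and then proves the pointwise product inequality $\varphi(\alpha r)\,F^{-1}(F(a)-F'(a)t) \le \varphi(\beta r)\,Q^{-1}(Q(a)-Q'(a)t)$ directly, whereas you split the product into two separate factor inequalities $\alpha^{-n}\nu(\alpha L)\le\beta^{-n}\nu(\beta L)$ and $\alpha^{n}\int\ldots\le\beta^{n}\int\ldots$ and multiply them; your version makes the role of the domain $[0,\alpha]\subseteq[0,\beta]$ in the pointwise comparison somewhat more explicit.
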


\begin{proof} Let $\varphi$ be the density of $\nu$, then we integrate via polar coordinates to obtain
$$
\nu\left(\frac{F(a)}{F^\prime(a)} L\right)
    \int_{0}^1F^{-1}\left(F(a)t\right)(1-t)^{n-1}dt = \int_{\s^{n-1}}
    \int_0^{\frac{F(a)}{F^\prime(a)}\rho_L(\theta)} \varphi(r) r^{n-1}dr d\theta
    \int_{0}^1F^{-1}\left(F(a)(1-t)\right)t^{n-1}dt 
$$
$$
= \int_{\s^{n-1}}
    \int_0^{\rho_L(\theta)} \varphi\left(\frac{F(a)}{F^\prime(a)}r\right) r^{n-1}dr d\theta
    \int_{0}^1F^{-1}\left(F(a)- F'(a)t)\right)t^{n-1}dt $$
    $$
    =  \int_{\s^{n-1}}
    \int_0^{\rho_L(\theta)} \int_{0}^1 \varphi\left(\frac{F(a)}{F^\prime(a)}r\right) 
    F^{-1}\left(F(a)- F'(a)t)\right)t^{n-1} r^{n-1}dt dr  d\theta.
$$
Thus, to complete the proof, it is enough to show that
$$
\varphi\left(\frac{F(a)}{F^\prime(a)}r\right) 
    F^{-1}\left(F(a)t- F'(a)t)\right) \le \varphi\left(\frac{Q(a)}{Q^\prime(a)}r\right) 
    Q^{-1}\left(Q(a)- Q'(a)t)\right).
$$
Using \eqref{eq:cocavitycomp}, we see that it is sufficient to show that
$
\varphi\left(\frac{F(a)}{F^\prime(a)}r\right) 
    \le \varphi\left(\frac{Q(a)}{Q^\prime(a)}r\right). 
$
Using that $\varphi$ is non-decreasing, it is enough to prove that 
$$
\frac{F(a)}{F^\prime(a)} 
    \le \frac{Q(a)}{Q^\prime(a)}. 
$$
Let $a=F^{-1}(b)$. Then, our goal is to prove
\begin{equation}\label{eq:ratio}
\frac{b}{F^\prime(F^{-1}(b))} 
    \le \frac{Q(F^{-1}(b))}{Q^\prime(F^{-1}(b))}. 
\end{equation}
Denote by $G(b)=Q(F^{-1}(b))$ so that $G'(b)=\frac{Q'(F^{-1}(b))}{F'(F^{-1}(b))}$ and (\ref{eq:ratio}) becomes
$G'(b)b \le G(b)$; but, this follows immediately from concavity of $G$ and the fact that $G(0)\geq 0$.  Indeed,
$$
G(b)=G(0)+G'(c)(b-0),
$$
where $c\in [0,b]$. Using the concavity of $G$, we get that $G'(c)\ge G'(b)$, and thus we have:
$$
G(b)=G(0)+G'(c)(b-0) \ge G(0)+G'(b)(b-0) \ge G'(b)b,
$$
which implies the claim.
\end{proof}

\begin{corollary}
[Zhang's Inequality for Two Measures]
Suppose $F$ is an increasing, differentiable, non-negative and invertible function. Consider $K\in\conbod$, $F$-concave $\mu\in\Lambda$, $\nu\in\Lambda_{\text{rad}}$, and further suppose the density of $\mu$ is locally Lipschitz. Then,
\begin{align*}
    \nu_{\mu}(K)\leq\frac{n}{\mu(K)}\nu\left(\frac{F(\mu(K))}{F^\prime(\mu(K))}\left(\Pi_{\mu}K-\eta_{\mu,K}\right)^\circ\right)\int_{0}^1F^{-1}\left(F(\mu(K))t\right)(1-t)^{n-1}dt.
\end{align*}
\label{F_theorem}
Equality occurs if, and only if, the following are true:
\begin{enumerate}
    \item If $\varphi$ is the density of $\nu,$ then, for each $\theta\in\s^{n-1}$, $\varphi(r\theta)$ is independent of $r,$
    \item for each $\theta\in\s^{n-1},$ $F\circ g_{\mu,K}(r\theta)$ is an affine function in the variable $r$ for $r\in[0,\rho_{DK}(\theta)],$ and
    \item  $K$ is $\mu$-projective such that $$DK=\frac{F(\mu(K))}{F^\prime(\mu(K))}\Pi^\circ_{\mu}K.$$
\end{enumerate}

\end{corollary}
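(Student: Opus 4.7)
The plan is to derive this corollary as a direct specialization of Theorem~\ref{Ff_theorem} by taking $f=\chi_K$. First I would verify that the hypotheses of Theorem~\ref{Ff_theorem} hold for this choice of $f$. The function $\chi_K$ is differentiable almost everywhere (its only non-smooth set is $\partial K$, which has Lebesgue measure zero), is bounded by $1$, and, since the density $\phi$ of $\mu$ is locally Lipschitz (hence continuous and locally bounded), $\chi_K$ lies in $L^1(\mu,K)\cap L^1(\mu,\partial K)$. The required concavity of $F\circ g_{\mu,\chi_K}(K,\cdot)=F\circ g_{\mu,K}$ is exactly the content of Lemma~\ref{l:covario_concave} applied with the class of convex bodies, using that $K\cap (K+x)$ is itself a convex body whenever $x\in DK$.

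Next I would identify each object in Theorem~\ref{Ff_theorem} under the specialization. From $f=\chi_K$ one reads off $\|f\|_{L^1(\mu,K)}=\mu(K)$ and $g_{\mu,f}(K,x)=g_{\mu,K}(x)$, so that $\nu_\mu(\chi_K,K)=\nu_\mu(K)$ in the sense of Definition~\ref{nu_mu_avg}. Comparing the defining formulas \eqref{e:weak_supp_f_mu} and \eqref{e:mu_polar_suppot} gives $h_{\Pi_{\mu,K}\chi_K}(\theta)=h_{\Pi_\mu K}(\theta)$, hence $\Pi_{\mu,K}\chi_K=\Pi_\mu K$. Moreover, since $\nabla\chi_K\equiv 0$ on the open set where $\chi_K$ is differentiable and equal to $1$, the vector $\tau_{\mu,\chi_K,K}=\tfrac{1}{2}\int_K\nabla\phi(y)\,dy$ collapses to $\eta_{\mu,K}$ of \eqref{e:eta}. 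Thus $\Pi_{\mu,K}\chi_K-\tau_{\mu,\chi_K,K}=\Pi_\mu K-\eta_{\mu,K}$, and substituting these identifications into the inequality of Theorem~\ref{Ff_theorem} yields the claimed bound.

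The equality characterization is obtained by transporting the three conditions of Theorem~\ref{Ff_theorem} through the same specialization: condition~(1) (constancy of the density of $\nu$ along each ray) is verbatim; condition~(2) becomes the affinity of $F\circ g_{\mu,K}(r\theta)$ in $r$ for $r\in[0,\rho_{DK}(\theta)]$; condition~(3) (projectivity of $\chi_K$ with respect to $\mu$ and $K$, together with $DK=\tfrac{F(\mu(K))}{F'(\mu(K))}\Pi^\circ_{\mu,K}\chi_K$) is exactly the statement that $K$ is $\mu$-projective and $DK=\tfrac{F(\mu(K))}{F'(\mu(K))}\Pi^\circ_\mu K$, matching the terminology introduced after \eqref{e:eta}. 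I do not foresee a serious obstacle; the only delicate point is making sure Lemma~\ref{l:covario_concave} is applied on a class of convex bodies stable under the operation $K\mapsto K\cap(K+x)$ so that the concavity input to Theorem~\ref{Ff_theorem} is genuinely available, which is why the convex-body setting of the corollary is essential.
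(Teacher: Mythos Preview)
Your proposal is correct and follows essentially the same approach as the paper: invoke Lemma~\ref{l:covario_concave} to ensure $F\circ g_{\mu,K}$ is concave, then specialize Theorem~\ref{Ff_theorem} with $f=\chi_K$ and identify $\|\chi_K\|_{L^1(\mu,K)}=\mu(K)$, $\Pi_{\mu,K}\chi_K=\Pi_\mu K$, and $\tau_{\mu,\chi_K,K}=\eta_{\mu,K}$. Your write-up is in fact more explicit than the paper's two-line proof in checking the hypotheses and matching the equality conditions.
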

\begin{proof}
From Lemma~\ref{l:covario_concave}, $g_{\mu,K}$ is $F$-concave. Set $f=\chi_K$ in Theorem~\ref{Ff_theorem}, for $K\in\conbod.$ The equality conditions are inherited from Lemma~\ref{t:chak}.
\end{proof}

We now obtain a result for $s$-concave measures, $s>0.$ We first need the following lemma, which establishes that $s$-concave measures have locally Lipschitz densities.
\begin{lemma}
\label{l:scon}
Let $\mu$ be a locally finite, regular, $s$-concave Borel measure on $\R^n,$ $s>0.$ Then, $\mu$ has a locally Lipschitz density.
\end{lemma}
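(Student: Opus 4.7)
The plan is to follow the template established by the proof of Lemma~\ref{l:log} for log-concave measures, replacing the convexity of $-\log\phi$ with the concavity of a suitable power of $\phi$. The starting point will again be Borell's classification of $s$-concave measures \cite{Bor}: a locally finite, regular Borel measure $\mu$ on $\R^n$ is $s$-concave with $0<s\leq 1/n$ if and only if its density $\phi$ (with respect to Lebesgue measure on its affine hull) is $\kappa$-concave with $\kappa=s/(1-ns)\in(0,\infty]$, meaning $\phi^{\kappa}$ is concave on $\operatorname{int}(\supp\phi)$ (with the convention that $\kappa=\infty$ gives a locally constant density when $s=1/n$). For $s>1/n$ no non-trivial measure exists in $\Lambda$, so we may restrict to $s\in(0,1/n]$.

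Next I would recall the classical fact (used already in the proof of Lemma~\ref{l:log} via the convex case) that a finite concave function is locally Lipschitz on the interior of its domain. Applying this to $\phi^{\kappa}$, we get that $\phi^{\kappa}$ is locally Lipschitz on $\operatorname{int}(\supp\phi)$. Fix any compact set $U\subset\operatorname{int}(\supp\phi)$; since $\phi^{\kappa}$ is positive and continuous on $U$, there exist $0<m\leq M<\infty$ with $m\leq \phi^{\kappa}(x)\leq M$ for all $x\in U$.

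Finally I would conclude by composing. The map $t\mapsto t^{1/\kappa}$ is continuously differentiable, hence Lipschitz, on the compact interval $[m,M]\subset(0,\infty)$, with some constant $C=C(\kappa,m,M)$. Therefore, for any $\xi_{1},\xi_{2}\in U$,
\[
|\phi(\xi_{1})-\phi(\xi_{2})|=|(\phi^{\kappa}(\xi_{1}))^{1/\kappa}-(\phi^{\kappa}(\xi_{2}))^{1/\kappa}|\leq C\,|\phi^{\kappa}(\xi_{1})-\phi^{\kappa}(\xi_{2})|\leq C\|\phi^{\kappa}\|_{\operatorname{Lip}(U)}|\xi_{1}-\xi_{2}|,
\]
which shows that $\phi$ is Lipschitz on $U$. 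Since $U$ was an arbitrary compact subset of the interior of $\supp\phi$, this establishes that $\phi$ is locally Lipschitz in the sense defined in Section~\ref{preliminaries}.

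There is no genuine obstacle here: the only care needed is to invoke Borell's classification with the correct exponent and to restrict to compact subsets of $\operatorname{int}(\supp\phi)$ so that $\phi^{\kappa}$ stays bounded away from $0$, which is what allows the non-Lipschitz behaviour of $t\mapsto t^{1/\kappa}$ near the origin (when $\kappa>1$) to be avoided. The argument is essentially identical in spirit to Lemma~\ref{l:log}, with the convex function $\psi$ there replaced by the concave function $\phi^{\kappa}$ here.
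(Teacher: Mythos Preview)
Your argument is correct. Both you and the paper start from Borell's classification with the exponent $\kappa=s/(1-ns)$, but then diverge: the paper observes via Jensen's inequality that a $\kappa$-concave density with $\kappa>0$ is automatically log-concave, and simply invokes Lemma~\ref{l:log}; you instead work directly with the concave function $\phi^{\kappa}$, use that concave functions are locally Lipschitz, and then compose with the Lipschitz map $t\mapsto t^{1/\kappa}$ on compact subintervals of $(0,\infty)$. The paper's route is shorter because it recycles Lemma~\ref{l:log} wholesale, while yours is self-contained and avoids the detour through log-concavity; both handle the degenerate cases $s=1/n$ and $s>1/n$ the same way.
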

\begin{proof}
From Borell's classification on concave measures \cite{Bor}, a locally finite and regular Borel measure is $s$-concave on Borel subsets of $\R^n$, $s\in (0,1/n),$ if, and only if, $\mu$ has a density $\phi(x)$ that is $p$-concave, where $p=s/(1-ns)>0.$ Consequently, one has, for $x,y$ in the support of $\phi$ and $t\in[0,1]$ that
$$\phi(tx+(1-t)y)\geq \left(t\phi(x)^p+(1-t)\phi(y)^p\right)^{1/p}\geq \phi(x)^t\phi(y)^{1-t},$$
where the first inequality is from the $p$-concavity of $\phi$ and the second inequality is from Jensen's inequality. Consequently, $\phi$ is also log-concave, and the result follows from Lemma~\ref{l:log}. When $s=1/n,$ then $\phi$ is a constant on its support, and, when $s>1/n,$ $\phi$ is identically zero. In either case, the result follows.
\end{proof}
\noindent We also also need the following characterization of a simplex.
\begin{lemma}
\label{l:simp}
Let $\mu\in\Lambda$ be an $s$-concave measure on Borel measurable sets, $s>0$, such that the the density of $\mu$ is not zero almost everywhere. Then for $K\in\conbod,$ the following are equivalent:
\begin{enumerate}
    \item K is a simplex.
    \item For every $\theta\in\s^{n-1},$ $g^s_{\mu,K}(r\theta)$ is an affine function in the variable $r$ for $r\in[0,\rho_{DK}(\theta)].$
\end{enumerate}
\end{lemma}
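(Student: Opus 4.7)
My plan is to prove the two directions of the equivalence separately, pairing the $s$-concavity of $g_{\mu,K}$ on $DK$ from Lemma~\ref{l:covario_concave} with the list of simplex characterizations in Proposition~\ref{p:simp}.

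For $(2)\Rightarrow(1)$, I would observe that $g_{\mu,K}^s$ is a non-negative concave function on $DK$ equal to $\mu(K)^s$ at the origin and vanishing at $\rho_{DK}(\theta)\theta$; hence affinity along each ray forces equality throughout the concavity chain that proves Lemma~\ref{l:covario_concave}. Unpacking that proof, equality must hold simultaneously in the set inclusion $(1-t)(K\cap(K+x))+t(K\cap(K+y))\subseteq K\cap(K+(1-t)x+ty)$ and in the $s$-concavity of $\mu$. Specializing to $x=0$, $y=r\theta$ and using that the density of $\mu$ is positive on its support (so that convex $\mu$-null subsets are Lebesgue-null), the set inclusion becomes an honest set equality. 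Coupled with equality in the Brunn-Minkowski-type inequality for $\mu$, which forces homothety of the Minkowski summands, I would deduce that $K\cap(K+r\theta)$ is homothetic to $K$ for every $r\in[0,\rho_{DK}(\theta)]$ and every $\theta\in\s^{n-1}$. Proposition~\ref{p:simp}, Item~\ref{item:hom} then yields that $K$ is a simplex.

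For $(1)\Rightarrow(2)$, I would use Proposition~\ref{p:simp}, Item~\ref{item:hom} to write $K\cap(K+r\theta)=\lambda(r)K+(1-\lambda(r))\{p^*\}$, where $\lambda(r)=1-r/\rho_{DK}(\theta)$ is affine in $r$ and $p^*\in\partial K$ is the vertex maximizing $\langle\cdot,\theta\rangle$. Since $\mu(\{p^*\})=0$, the $s$-concavity of $\mu$ applied to this Minkowski combination yields the lower bound $g_{\mu,K}(r\theta)^s\ge\lambda(r)\mu(K)^s$. The matching upper bound comes from Lemma~\ref{l:concave} applied to the concave function $r\mapsto g_{\mu,K}^s(r\theta)$, provided the critical parameter $z(\theta)$ there equals $\rho_{DK}(\theta)$; concavity along the ray then forces affinity.

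The main obstacle in both directions is the delicate analysis of the equality case of the $s$-concavity of $\mu$: in $(2)\Rightarrow(1)$ one must extract Minkowski-summand homothety from equality, and in $(1)\Rightarrow(2)$ one must identify $z(\theta)$ with $\rho_{DK}(\theta)$ using the simplex structure together with the definition of $h_{\Pi_\mu K-\eta_{\mu,K}}$. Both steps rely on the hypothesis that the density of $\mu$ is not a.e.\ zero, propagating the Brunn-Minkowski-type equality conditions for $\mu$ down to the underlying convex-body summands.
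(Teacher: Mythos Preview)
Your direction $(2)\Rightarrow(1)$ is essentially the paper's argument: both unpack equality in the chain of Lemma~\ref{l:covario_concave} and appeal to the fact that equality in the $s$-concavity of $\mu$ forces the Minkowski summands to be homothetic. The paper supplies the precise citation for this equality characterization, namely Milman--Rotem \cite[Corollary~2.16]{MR14}, which you invoke only informally.

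Your direction $(1)\Rightarrow(2)$, however, has a real gap. The lower bound you obtain by writing $K\cap(K+r\theta)=\lambda(r)K+(1-\lambda(r))\{p^*\}$ and applying $s$-concavity to the pair $(K,\{p^*\})$ yields exactly the chord
\[
g^s_{\mu,K}(r\theta)\ \geq\ \lambda(r)\,\mu(K)^s,
\]
which is already immediate from the concavity of $g^s_{\mu,K}$ and its endpoint values; it carries no new information. Your upper bound from Lemma~\ref{l:concave} is $\mu(K)^s(1-r/z(\theta))$ with $z(\theta)=\mu(K)/\bigl(s\,h_{\Pi_\mu K-\eta_{\mu,K}}(\theta)\bigr)$, and your sandwich closes only if $z(\theta)=\rho_{DK}(\theta)$. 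That identity is \emph{not} known a priori for a simplex under a general $s$-concave $\mu$; it is, in fact, equivalent to the equality case of \eqref{e:mu_set_incl}, which is part of what this lemma is used to characterize. So the argument is circular as written.

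The paper sidesteps this by applying Milman--Rotem symmetrically: for a simplex, $K\cap(K+r_1\theta)$ and $K\cap(K+r_2\theta)$ are homothetic to $K$ and hence to one another (Proposition~\ref{p:simp}, Item~\ref{item:hom}), so the equality case of $s$-concavity gives
\[
\mu^s\!\bigl((1-\lambda)[K\cap(K+r_1\theta)]+\lambda[K\cap(K+r_2\theta)]\bigr)=(1-\lambda)\,g^s_{\mu,K}(r_1\theta)+\lambda\,g^s_{\mu,K}(r_2\theta),
\]
and since for a simplex the set inclusion in Lemma~\ref{l:covario_concave} is also an equality, the left side is $g^s_{\mu,K}\bigl((1-\lambda)r_1\theta+\lambda r_2\theta\bigr)$. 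The fix for your argument is to replace the pair $(K,\{p^*\})$ by the pair of genuine homothets $(K,\,K\cap(K+r\theta))$ and invoke Milman--Rotem to get an \emph{equality}, not merely an inequality.
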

\begin{proof}
For $\theta\in\s^{n-1},$ the proof of Lemma~\ref{l:covario_concave} reveals that $g^s_{\mu,K}(r\theta)$ is an affine function in the variable $r$ if, and only if, for every $r_1,r_2\geq 0,$ one has
$$\mu^s((1-\lambda)K\cap (K+r_1\theta)+\lambda K\cap (K+r_2\theta))=(1-\lambda)\mu^s(K\cap (K+r_1\theta)) + \lambda \mu^s(K\cap(K+r_2\theta)).$$
As established by Milman and Rotem \cite[Corollary 2.16]{MR14}, this is true if, and only if, $K\cap(K+r_1\theta)$ is homothetic to $K\cap(K+r_2\theta),$ for every $r_1,r_2 \geq 0$ such that those intersections are non-empty. In particular, Proposition~\ref{p:simp} shows this is true if, and only if, $K$ is a simplex.
\end{proof}

\begin{theorem}[Zhang's Inequality for $s$-concave measures]
Let $\nu$ be a radially non-decreasing measure. If $\mu\in\Lambda$ is a $s$-concave measure on Borel measurable sets, $s>0,$ then, for $K\in\conbod,$ we have that
\begin{equation}
    {{n+s^{-1}}\choose n}\nu_\mu(K)\leq\nu\left(s^{-1}\mu(K)\left(\Pi_{\mu}K-\eta_{\mu,K}\right)^\circ\right). 
\end{equation}
Equality occurs if, and only if, the following are true:
\begin{enumerate}
    \item If $\varphi$ is the density of $\nu,$ then, for each $\theta\in\s^{n-1}$, $\varphi(r\theta)$ is independent of $r,$
    \item K is a $\mu$-projective simplex, and
    \item  $DK=s^{-1}\mu(K)\Pi^\circ_{\mu}K.$
\end{enumerate}

\label{F_s}
\end{theorem}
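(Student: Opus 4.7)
The plan is to invoke Corollary~\ref{F_theorem} with the specific choice $F(x)=x^s$, which is exactly the concavity condition packaged into the hypothesis. First I would verify the prerequisites: since $\mu$ is $s$-concave on Borel sets with $s>0$, Lemma~\ref{l:scon} guarantees that $\mu$ has a locally Lipschitz density, so Corollary~\ref{F_theorem} is applicable. Moreover, $F(x)=x^s$ is increasing, non-negative, differentiable, and invertible on $(0,\infty)$.

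Next, I would substitute $F(x)=x^s$ into the bound of Corollary~\ref{F_theorem}. One has $F'(x)=sx^{s-1}$, so
\[
\frac{F(\mu(K))}{F'(\mu(K))}=\frac{\mu(K)^s}{s\mu(K)^{s-1}}=\frac{\mu(K)}{s},
\]
and $F^{-1}(y)=y^{1/s}$, giving $F^{-1}\!\left(F(\mu(K))\,t\right)=\mu(K)\,t^{1/s}$. Therefore Corollary~\ref{F_theorem} reads
\[
\nu_\mu(K)\leq \frac{n}{\mu(K)}\,\nu\!\left(s^{-1}\mu(K)\bigl(\Pi_\mu K-\eta_{\mu,K}\bigr)^\circ\right)\int_0^1 \mu(K)\,t^{1/s}(1-t)^{n-1}\,dt.
\]
The remaining integral is a Beta function: $\int_0^1 t^{1/s}(1-t)^{n-1}\,dt=B(s^{-1}+1,n)=\Gamma(s^{-1}+1)\Gamma(n)/\Gamma(n+s^{-1}+1)$. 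Multiplying by the prefactor $n$, the $\mu(K)$ cancels and
\[
n\cdot\frac{\Gamma(s^{-1}+1)\Gamma(n)}{\Gamma(n+s^{-1}+1)}=\frac{n!\,\Gamma(s^{-1}+1)}{\Gamma(n+s^{-1}+1)}=\binom{n+s^{-1}}{n}^{-1},
\]
which after rearrangement is precisely the claimed inequality.

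For the equality cases, the three conditions in Corollary~\ref{F_theorem} specialize immediately: condition~(1) on the radial behavior of $\varphi$ is unchanged; condition~(3) becomes $DK=s^{-1}\mu(K)\Pi_\mu^\circ K$ after substituting $F(\mu(K))/F'(\mu(K))=\mu(K)/s$ and observing (as in the proof of Theorem~\ref{Ff_theorem}) that symmetry of $DK$ forces $\eta_{\mu,K}=0$, so $K$ is $\mu$-projective. The remaining translation is condition~(2): Corollary~\ref{F_theorem} asks that $F\circ g_{\mu,K}(r\theta)=g_{\mu,K}^{s}(r\theta)$ be affine in $r$ on $[0,\rho_{DK}(\theta)]$ for each direction $\theta$, and this is where Lemma~\ref{l:simp} is essential --- it characterizes exactly this affinity condition as being equivalent to $K$ being a simplex.

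The main obstacle, modest as it is, lies in this last translation: one must be careful that Lemma~\ref{l:simp}, which was stated for general $s$-concave $\mu$ with nontrivial density, really does produce the simplex characterization in the present setting (one has to rule out the degenerate cases $s>1/n$ and $s=1/n$, but these are either vacuous or force $\phi$ constant, in which case the problem reduces to the Lebesgue case and Proposition~\ref{p:simp} applies directly). Everything else is bookkeeping with the Gamma function identities.
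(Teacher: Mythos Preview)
Your proof is correct and follows essentially the same route as the paper: invoke Lemma~\ref{l:scon} for the Lipschitz density, apply Corollary~\ref{F_theorem} with $F(x)=x^s$, evaluate the resulting Beta integral, and read off the equality conditions via Lemma~\ref{l:simp}. Your write-up is in fact slightly more detailed than the paper's on the equality analysis (the paper just says the conditions are ``inherited from Lemmas~\ref{t:chak} and~\ref{l:simp}''), but the argument is the same.
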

\begin{proof}
From Lemma~\ref{l:scon}, $\mu$ has Lipschitz density. Next, setting $F(x)=x^{s}$ in Corollary~\ref{F_theorem} yields
$$\nu_\mu(K)\leq n \nu\left(s^{-1}\mu(K)\left(\Pi_{\mu}K-\eta_{\mu,K}\right)^\circ\right)\int_{0}^1 t^{1/s} t^{n-1}dt.$$
If we denote
$$
\binom{n+s^{-1}}{n}=\frac{\Gamma (n+s^{-1}+1)}{n!\Gamma(s^{-1}+1)},
$$
then recognizing the integral is $B(s^{-1}+1,n)=\left[n\binom{n+s^{-1}}{n}\right]^{-1}$
yields the result. The equality conditions are inherited from Lemmas~\ref{t:chak} and \ref{l:simp}.
\end{proof}

\noindent We obtain the following immediate corollary.
\begin{corollary}
\label{cor:s_zhang}
If $\nu=\lambda$, the Lebesgue measure, in Corollary~\ref{F_theorem}, then we have
\begin{equation*}
    \vol_n(K)\leq\frac{n\vol_n\left(\left(\Pi_{\mu}K-\eta_{\mu,K}\right)^\circ\right)}{\mu(K)}\left(\frac{F(\mu(K))}{F^\prime(\mu(K))}\right)^n\int_{0}^{1} F^{-1}[F(\mu(K))t] (1-t)^{n-1} d t.
\end{equation*}
Furthermore, if we have that $F(x)=x^{s}$, $s\in (0,1/n],$ then we have from Theorem~\ref{F_s}
\begin{equation*}
    s^n\binom{n+s^{-1}}{n}\leq\frac{\mu^{n}(K)\vol_n\left(\left(\Pi_{\mu}K-\eta_{\mu,K}\right)^\circ\right)}{\vol_n(K)}.
    \end{equation*}
    In particular, when we also have $\mu=\lambda$, one is allowed to set $s=1/n$ and obtain \eqref{e:Zhang_ineq}.
\end{corollary}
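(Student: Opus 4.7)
The plan is to deduce both inequalities by specializing the two preceding results, Corollary~\ref{F_theorem} and Theorem~\ref{F_s}, to the case $\nu=\lambda$. The Lebesgue measure clearly lies in $\Lambda_{\text{rad}}$ since its density is the constant $1$, and Example~\ref{int_g_mu} (with the convention of Definition~\ref{nu_mu_avg}) gives the identification $\lambda_\mu(K)=\vol_n(K)$. Furthermore, for any measurable set $A\subset\R^n$ and any scalar $c>0$, homogeneity gives $\lambda(cA)=c^n\vol_n(A)$. Substituting these two simplifications directly into the conclusion of Corollary~\ref{F_theorem}, and collecting the factor $\left(F(\mu(K))/F^\prime(\mu(K))\right)^n$ produced by the homogeneity, yields the first displayed inequality exactly as stated.

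For the second inequality I would invoke Theorem~\ref{F_s}, which is the specialization $F(x)=x^s$ and already handles the required Lipschitz regularity of the density via Lemma~\ref{l:scon}. On the left the quantity $\binom{n+s^{-1}}{n}\nu_\mu(K)$ becomes $\binom{n+s^{-1}}{n}\vol_n(K)$; on the right, using $\lambda(cA)=c^n\vol_n(A)$ with $c=s^{-1}\mu(K)$, one obtains
\[
\lambda\!\left(s^{-1}\mu(K)(\Pi_\mu K-\eta_{\mu,K})^\circ\right)=s^{-n}\mu(K)^n\vol_n\!\left((\Pi_\mu K-\eta_{\mu,K})^\circ\right).
\]
Dividing through by $s^{-n}\vol_n(K)$ and moving the binomial coefficient produces the second inequality of the corollary.

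Finally, to recover classical Zhang, I would set $\mu=\lambda$ and $s=1/n$. The Brunn-Minkowski inequality supplies the necessary $1/n$-concavity of $\lambda$, Example~\ref{even_proj} gives $\eta_{\lambda,K}=0$, and Cauchy's projection formula identifies $\Pi_\lambda K=\Pi K$, so $(\Pi_\lambda K-\eta_{\lambda,K})^\circ=\Pi^\circ K$. The constant $s^n\binom{n+s^{-1}}{n}$ reduces to $n^{-n}\binom{2n}{n}$, and the second inequality becomes
\[
\frac{1}{n^n}\binom{2n}{n}\leq \vol_n(K)^{n-1}\vol_n(\Pi^\circ K),
\]
which is precisely the left-hand side of \eqref{e:Zhang_ineq}. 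Since each step is a direct substitution into previously established results, there is no genuine obstacle here; the only care required is bookkeeping the scaling factor $F(\mu(K))/F^\prime(\mu(K))=\mu(K)/s$ together with the powers of $\mu(K)$ and $s$ that appear when pulling the dilation constant through the $n$-homogeneity of Lebesgue measure.
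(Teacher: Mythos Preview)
Your proposal is correct and matches the paper's approach: the paper itself gives no proof, stating only that the corollary is immediate from Corollary~\ref{F_theorem} and Theorem~\ref{F_s}, and your write-up is precisely the explicit verification of those specializations via $\lambda_\mu(K)=\vol_n(K)$ and the $n$-homogeneity of Lebesgue measure. The bookkeeping you describe is exactly what is needed, and the recovery of Zhang's inequality at $\mu=\lambda$, $s=1/n$ is carried out correctly.
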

We conclude by stating the following theorem, whose proof is the same as Corollary~\ref{F_theorem} with $r_{\mu,K}$ in place of $g_{\mu,K}$, and is omitted. Recall that the support of $r_{\mu,K}$ is still $DK.$

\begin{theorem}[Polarized Zhang's Inequality for Even Measures on Symmetric Convex Bodies]
Consider a symmetric body  $K\in\conbod_0$, an even measure $\mu\in\Lambda$ with locally Lipschitz density, and a measure $\nu\in\Lambda_{\text{rad}}$. Suppose $R$ is an increasing, non-negative, and invertible function such that $R\circ r_{\mu,K}$ is concave. Then,
\begin{align*}
    \nu_{\mu}(K)\leq\frac{n}{\mu(K)}\nu\left(\frac{R(\mu(K))}{R^\prime(\mu(K))}\Pi^\circ_\mu K\right)\int_{0}^1R^{-1}\left(R(\mu(K))t\right)(1-t)^{n-1}dt.
\end{align*}
Equality occurs if, and only if, the following are true:
\begin{enumerate}
    \item If $\varphi$ is the density of $\nu,$ then, for each $\theta\in\s^{n-1}$, $\varphi(r\theta)$ is independent of $r,$
    \item for each $\theta\in\s^{n-1},$ $R\circ r_{\mu,K}(r\theta)$ is an affine function in the variable $r$ for $r\in[0,\rho_{DK}(\theta)],$ and
    \item   $DK=\frac{R(\mu(K))}{R^\prime(\mu(K))}\Pi^\circ_{\mu}K.$
\end{enumerate}

\label{R_theorem}
\end{theorem}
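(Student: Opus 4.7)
The plan is to execute the same steps as in the proof of Corollary~\ref{F_theorem}, but with the polarized covariogram $r_{\mu,K}$ in the role of $g_{\mu,K}$. The two geometric ingredients are already established in Section~\ref{sec:res}: the support of $r_{\mu,K}$ is $DK$, and under the symmetry hypotheses on $K$ and $\mu$ the polarized brightness identity
\begin{equation*}
    \diff{r_{\mu,K}(r\theta)}{r}\bigg|_{r=0} = -h_{\Pi_\mu K}(\theta)
\end{equation*}
holds with no $\eta_{\mu,K}$ correction term (cf.\ Example~\ref{even_proj}).

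First I would set $f(x) = R(r_{\mu,K}(x))$. By hypothesis $f$ is concave on $DK$; since $\mu$ is even and $K$ is symmetric, $r_{\mu,K}$ is itself symmetric, so $f$ is a symmetric concave function on the symmetric body $DK$ and thus attains its maximum at the origin, with $f(0) = R(\mu(K))$. The chain rule together with the polarized brightness identity gives
\begin{equation*}
    \diff{f(r\theta)}{r}\bigg|_{r=0} = R'(\mu(K))\, \diff{r_{\mu,K}(r\theta)}{r}\bigg|_{r=0} = -R'(\mu(K))\, h_{\Pi_\mu K}(\theta),
\end{equation*}
which is non-positive because $R$ is increasing, placing us in the hypotheses of Lemma~\ref{t:chak}.

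Applying Lemma~\ref{t:chak} with this $f$, with $L = DK$, with the increasing function $q = R^{-1}$, and with $\nu \in \Lambda_{\text{rad}}$, a direct computation yields
\begin{equation*}
    z(\theta) = \frac{R(\mu(K))}{R'(\mu(K))\, h_{\Pi_\mu K}(\theta)} = \rho_{\frac{R(\mu(K))}{R'(\mu(K))}\Pi_\mu^\circ K}(\theta)
\end{equation*}
and $\beta = n\int_0^1 R^{-1}(R(\mu(K))\,t)(1-t)^{n-1}\,dt$. The conclusion of the lemma then reads
\begin{equation*}
    \int_{DK} r_{\mu,K}(x)\,d\nu(x) \;\leq\; \beta\,\nu\!\left(\tfrac{R(\mu(K))}{R'(\mu(K))}\Pi_\mu^\circ K\right),
\end{equation*}
once the polar-coordinate integral on the right of Lemma~\ref{t:chak} has been recognized as the $\nu$-measure of the dilated polar body. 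Dividing through by $\mu(K)$ produces the stated inequality, with $\nu_\mu(K)$ understood in the polarized sense (per the substitution of $r_{\mu,K}$ for $g_{\mu,K}$ advertised before the theorem).

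Finally, the three equality conditions are inherited directly from those of Lemma~\ref{t:chak}: the radial constancy of the density of $\nu$ gives (1), affineness of $R \circ r_{\mu,K}(r\theta)$ in $r$ gives (2), and $z(\theta) = \rho_{DK}(\theta)$ rearranges to the identification $DK = \tfrac{R(\mu(K))}{R'(\mu(K))}\Pi_\mu^\circ K$, which is (3); no $\mu$-projectivity clause is needed because $K$ symmetric and $\mu$ even already imply $\eta_{\mu,K}=0$. The main obstacle I anticipate is not the chain of inequalities — these are short — but correctly deploying the symmetry hypotheses to drop the $\eta_{\mu,K}$ term from the brightness formula and to certify that $r_{\mu,K}$ is maximized at the origin, both of which rest on the combination of symmetry of $K$ and evenness of $\mu$.
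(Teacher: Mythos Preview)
Your proposal is correct and matches the paper's approach exactly: the paper states that the proof is the same as that of Corollary~\ref{F_theorem} with $r_{\mu,K}$ in place of $g_{\mu,K}$, and this is precisely what you carry out, invoking Lemma~\ref{t:chak} with $q=R^{-1}$, $L=DK$, and the concave function $R\circ r_{\mu,K}$, using the polarized brightness identity to eliminate the $\eta_{\mu,K}$ term. Your remark that $\nu_\mu(K)$ must be read in the polarized sense (i.e.\ with $r_{\mu,K}$ replacing $g_{\mu,K}$ in Definition~\ref{nu_mu_avg}) is a correct and worthwhile observation about the statement.
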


\noindent Repeating the calculations for Theorem~\ref{F_s}, we obtain the following corollary.
\begin{corollary}[Polarized Zhang's Inequality for $s$-concave, even measures]
Suppose $s>0$ and let $\nu$ be a radially non-decreasing measure. Also, consider an even  measure  $\mu\in\Lambda$ with locally Lipschitz density that is $s$-concave on the class of symmetric convex bodies. Then, for symmetric $K\in\conbod_0,$ one has
\begin{equation*}
    {{n+s^{-1}}\choose n}\nu_\mu(K)\leq\nu\left(s^{-1}\mu(K)\Pi^\circ_\mu K\right).
\end{equation*}
Equality occurs if, and only if, the following are true:
\begin{enumerate}
    \item If $\varphi$ is the density of $\nu,$ then, for each $\theta\in\s^{n-1}$, $\varphi(r\theta)$ is independent of $r,$
    \item for each $\theta\in\s^{n-1},$ $r^s_{\mu,K}(r\theta)$ is an affine function in the variable $r$ for $r\in[0,\rho_{DK}(\theta)],$ and
    \item  $DK=s^{-1}\mu(K)\Pi^\circ_{\mu}K.$
\end{enumerate}
\label{R_s}
\end{corollary}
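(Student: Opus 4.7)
The plan is to apply Theorem~\ref{R_theorem} with the specific choice $R(x)=x^s$ for $s>0$, and then reduce the resulting expression using properties of the Beta function. First, I must verify that the hypothesis of Theorem~\ref{R_theorem} is satisfied, namely that $R\circ r_{\mu,K}=r_{\mu,K}^{s}$ is concave on $DK$. This follows from the discussion preceding Definition~\ref{def:polar}: for a symmetric $K$, the set $(K-x/2)\cap(K+x/2)=K\cap(K+x)-x/2$ is symmetric for every $x\in DK$, and for $x,y\in DK$ and $t\in[0,1]$ one has the usual convex-combination containment
\[
(1-t)\bigl[(K-\tfrac{x}{2})\cap(K+\tfrac{x}{2})\bigr]+t\bigl[(K-\tfrac{y}{2})\cap(K+\tfrac{y}{2})\bigr]\subset \bigl(K-\tfrac{(1-t)x+ty}{2}\bigr)\cap\bigl(K+\tfrac{(1-t)x+ty}{2}\bigr),
\]
where the left-hand side is a Minkowski combination of symmetric convex bodies. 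Applying the $s$-concavity of $\mu$ on the class of symmetric convex bodies then gives that $x\mapsto r_{\mu,K}(x)^{s}$ is concave on $DK$, as in the proof of Lemma~\ref{l:covario_concave}.

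Once concavity is in hand, the computation of constants is routine. With $R(x)=x^s$ one has $R'(x)=sx^{s-1}$, so
\[
\frac{R(\mu(K))}{R'(\mu(K))}=\frac{\mu(K)}{s}=s^{-1}\mu(K),
\]
and, using $R^{-1}(y)=y^{1/s}$ together with the substitution defining the Beta function,
\[
\int_0^1 R^{-1}\bigl(R(\mu(K))t\bigr)(1-t)^{n-1}\,dt=\mu(K)\int_0^1 t^{1/s}(1-t)^{n-1}\,dt=\mu(K)\,B(s^{-1}+1,n).
\]
Plugging these into Theorem~\ref{R_theorem} yields
\[
\nu_\mu(K)\le \frac{n}{\mu(K)}\,\nu\!\left(s^{-1}\mu(K)\,\Pi_\mu^{\circ}K\right)\cdot\mu(K)\,B(s^{-1}+1,n)=n\,B(s^{-1}+1,n)\,\nu\!\left(s^{-1}\mu(K)\,\Pi_\mu^{\circ}K\right).
\]
Using the identity
\[
n\,B(s^{-1}+1,n)=\frac{n\,\Gamma(s^{-1}+1)\Gamma(n)}{\Gamma(n+s^{-1}+1)}=\binom{n+s^{-1}}{n}^{-1},
\]
exactly as in the proof of Theorem~\ref{F_s}, gives the stated inequality.

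Finally, the equality conditions are inherited directly from Lemma~\ref{t:chak} applied to $f=r_{\mu,K}^{s}$ (with $L=DK$ and $z(\theta)=s^{-1}\mu(K)\,\rho_{\Pi_\mu^\circ K}(\theta)=\rho_{s^{-1}\mu(K)\Pi_\mu^\circ K}(\theta)$): the requirement that $\varphi(r\theta)$ be independent of $r$ gives condition (1); affinity of $f$ along rays gives condition (2); and the equality $\rho_{DK}(\theta)=z(\theta)$ translates to $DK=s^{-1}\mu(K)\Pi_\mu^\circ K$, which is condition (3). The main technical obstacle is verifying the $s$-concavity of $r_{\mu,K}$, but this reduces cleanly to the symmetry of $(K\pm x/2)$-intersections and the assumed $s$-concavity of $\mu$ on symmetric bodies, so no new ideas beyond those used in the proof of Theorem~\ref{F_s} are required. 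Notably, the simplex characterization that appeared in Theorem~\ref{F_s} is absent here, since the equality condition involves symmetric bodies (where simplices do not arise).
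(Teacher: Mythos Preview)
Your proposal is correct and follows essentially the same approach as the paper, which merely states ``Repeating the calculations for Theorem~\ref{F_s}, we obtain the following corollary'' without further detail. Your explicit verification that $r_{\mu,K}^{s}$ is concave (via the symmetric-intersection containment and $s$-concavity of $\mu$ on symmetric bodies, as in the discussion surrounding Definition~\ref{def:polar}) and your Beta-function computation are exactly the intended steps, and your remark on why the simplex characterization from Theorem~\ref{F_s} drops out is apt.
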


We obtain the following immediate corollary for the case when $\nu$ is the Lebesgue measure.
\begin{corollary}
If $\nu=\lambda$, the Lebesgue measure, in Theorem~\ref{R_theorem}, then we have
\begin{equation*}
    \vol_n(K)\leq\frac{n\vol_n\left(\Pi^\circ_\mu K\right)}{\mu(K)}\left(\frac{R(\mu(K))}{R^\prime(\mu(K))}\right)^n\int_{0}^{1} R^{-1}[R(\mu(K))t] (1-t)^{n-1} d t.
\end{equation*}
Furthermore, if we have that $R(x)=x^{s}$, $s>0,$ then we have from Corollary~\ref{R_s}
\begin{equation*}
    s^n{{n+s^{-1}}\choose n}\vol_n(K)\leq\mu^{n}(K)\vol_n\left(\Pi^\circ_\mu K\right).
    \end{equation*}
\end{corollary}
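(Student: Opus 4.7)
The plan is to obtain this corollary as a direct specialization of Theorem~\ref{R_theorem} (and of Corollary~\ref{R_s} for the second claim) by choosing $\nu$ to be the Lebesgue measure. The first thing to observe is that $\lambda \in \Lambda_{\text{rad}}$: its density is the constant $1$, which is trivially radially non-decreasing and continuous on $\R^n \setminus \{0\}$, so the hypotheses of Theorem~\ref{R_theorem} apply.

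Next, the key is to simplify the two occurrences of $\nu$ on the two sides of the inequality. For the left-hand side, using Definition~\ref{nu_mu_avg} together with the computation in Example~\ref{int_g_mu} (which shows $\int_{\R^n} g_{\mu,K}(x)\,dx = \vol_n(K)\mu(K)$, and the support of $g_{\mu,K}$ is $DK$) gives
\[
\lambda_\mu(K) = \frac{1}{\mu(K)} \int_{DK} g_{\mu,K}(x)\,dx = \vol_n(K).
\]
For the right-hand side, by the homogeneity of the Lebesgue measure,
\[
\lambda\!\left(\frac{R(\mu(K))}{R'(\mu(K))}\Pi_\mu^\circ K\right) = \left(\frac{R(\mu(K))}{R'(\mu(K))}\right)^n \vol_n\!\left(\Pi_\mu^\circ K\right).
\]
Substituting these two identities into the conclusion of Theorem~\ref{R_theorem} yields exactly the first inequality of the corollary.

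For the second statement, apply Corollary~\ref{R_s} with $R(x) = x^s$ (so $\mu$ is even, $s$-concave on symmetric convex bodies, with locally Lipschitz density, and $K \in \conbod_0$ is symmetric) and $\nu = \lambda$. The same two simplifications apply: $\lambda_\mu(K) = \vol_n(K)$, and $\lambda(s^{-1}\mu(K)\Pi^\circ_\mu K) = s^{-n}\mu(K)^n \vol_n(\Pi^\circ_\mu K)$. Plugging these in gives
\[
\binom{n+s^{-1}}{n}\vol_n(K) \leq s^{-n}\mu^n(K)\vol_n(\Pi^\circ_\mu K),
\]
and multiplying through by $s^n$ produces the stated inequality. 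There is no real obstacle here, as the corollary is purely a matter of evaluating the right-hand side of the parent theorems when $\nu$ is Lebesgue; the only thing to verify is the admissibility of $\lambda$ as a member of $\Lambda_{\text{rad}}$, which is immediate.
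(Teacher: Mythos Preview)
Your proof is correct and follows exactly the approach the paper intends: the corollary is stated as an ``immediate corollary,'' and your argument---checking $\lambda\in\Lambda_{\text{rad}}$, using Example~\ref{int_g_mu} to get $\lambda_\mu(K)=\vol_n(K)$, and applying the $n$-homogeneity of Lebesgue measure on the right-hand side---is precisely that immediate specialization. The second part is likewise just Corollary~\ref{R_s} with $\nu=\lambda$ and the same two substitutions, as you wrote.
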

\noindent Finally, we obtain the following corollary for when $\mu=\gamma_n$.
\begin{corollary}[Polarized Zhang's Inequality for the Gaussian Measure]
    Consider the case in Corollary~\ref{R_s} for $\mu=\gamma_n$. Then, from \eqref{e:gamma_gaussian}, one is allowed to set $s=1/n$ and obtain, for $K\in\conbod$ such that $K=-K$:
    \begin{equation*}
        {{2n}\choose n}\nu_{\gamma_n}(K)\leq\nu\left(n\gamma_n(K)\Pi^\circ_{\gamma_n} K\right).
    \end{equation*}
    If $\nu=\lambda$, the Lebesgue measure, then we have 
    \begin{equation*}
    \frac{1}{n^n}{{2n}\choose n}\vol_n(K)\leq\gamma_n^{n}(K)\vol_n\left(\Pi^\circ_{\gamma_n} K\right).
    \end{equation*}
\end{corollary}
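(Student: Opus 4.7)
The plan is to derive the corollary as a direct specialization of Corollary~\ref{R_s} to $\mu = \gamma_n$ with the distinguished exponent $s = 1/n$. The Gaussian density $\phi(x) = (2\pi)^{-n/2}e^{-|x|^2/2}$ is smooth (in particular locally Lipschitz) and even, so the standing regularity and symmetry hypotheses of Corollary~\ref{R_s} are automatically satisfied by $\gamma_n$. The only nontrivial hypothesis to check is that $\gamma_n$ is $1/n$-concave on the class of symmetric convex bodies; but this is precisely the content of the symmetric Gaussian Brunn--Minkowski inequality \eqref{e:gamma_gaussian}, whose validity for symmetric convex bodies was established in \cite{EM} (building on \cite{KL}). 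This is the only external input the argument requires.

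With $s = 1/n$ admissible, I invoke Corollary~\ref{R_s} directly. The coefficient simplifies as $\binom{n + s^{-1}}{n} = \binom{2n}{n}$ and $s^{-1}\mu(K) = n\gamma_n(K)$, yielding
\begin{equation*}
\binom{2n}{n}\nu_{\gamma_n}(K) \leq \nu\bigl(n\gamma_n(K)\,\Pi^\circ_{\gamma_n} K\bigr)
\end{equation*}
for every symmetric $K \in \conbod$ and every $\nu \in \Lambda_{\text{rad}}$, which is the first assertion.

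For the specialization to $\nu = \lambda$, I must evaluate both sides explicitly. The Lebesgue measure belongs to $\Lambda_{\text{rad}}$ (its density is constant, hence trivially radially non-decreasing). For the right-hand side, the $n$-homogeneity of $\vol_n$ gives $\lambda\bigl(n\gamma_n(K)\Pi^\circ_{\gamma_n}K\bigr) = n^n \gamma_n^n(K)\,\vol_n(\Pi^\circ_{\gamma_n}K)$. For the left-hand side I use the far-right representation of $\nu_\mu(K)$ from Definition~\ref{nu_mu_avg}, together with the translation invariance $\vol_n(y-K) = \vol_n(K)$:
\begin{equation*}
\lambda_{\gamma_n}(K) = \frac{1}{\gamma_n(K)}\int_K \vol_n(y-K)\, d\gamma_n(y) = \frac{\vol_n(K)\gamma_n(K)}{\gamma_n(K)} = \vol_n(K).
\end{equation*}
Substituting both computations and dividing through by $n^n$ yields the claimed inequality
\begin{equation*}
\frac{1}{n^n}\binom{2n}{n}\vol_n(K) \leq \gamma_n^n(K)\,\vol_n(\Pi^\circ_{\gamma_n}K).
\end{equation*}

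There is no genuine obstacle here: the statement is a clean instantiation, and all the analytic and geometric work has been absorbed into Corollary~\ref{R_s} and into the deep symmetric Gaussian Brunn--Minkowski inequality cited from \cite{EM}. The one point worth flagging is that the hypothesis $s > 0$ in Corollary~\ref{R_s} was stated as $s > 0$, but that the value $s = 1/n$ is only permissible because of the symmetric case resolution of \eqref{e:gamma_gaussian}; without symmetry, the PT counterexample \cite{PT} shows $1/n$-concavity can fail for $\gamma_n$, which explains the restriction $K = -K$ in the statement.
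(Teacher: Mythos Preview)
Your proof is correct and follows essentially the same route as the paper: the corollary is stated there without proof, as an immediate specialization of Corollary~\ref{R_s} with $\mu=\gamma_n$ and $s=1/n$, the admissibility of $s=1/n$ being furnished by the symmetric Gaussian Brunn--Minkowski inequality \eqref{e:gamma_gaussian} from \cite{EM}. Your verification of the hypotheses and your explicit computation of $\lambda_{\gamma_n}(K)=\vol_n(K)$ via Definition~\ref{nu_mu_avg} (equivalently Example~\ref{int_g_mu}) are exactly what the paper leaves implicit.
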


\section{An Application to a Reverse Isoperimetric Inequality}
\label{sec:iso}
In this section, we shall use the Zhang inequalities of Sections~\ref{sec:log} and \ref{super_zhang_sec} to obtain a reverse isoperimetric inequality. We shall follow a schema by Giannopoulos and Papadimitrakis \cite{GP99}, who had previously done the volume case of what follows. In fact, this particular result in the volume case had been obtained back in 1961 by Petty \cite{petty2} using different methods. Throughout, $\mathrm{Id}=\mathrm{Id}_n$ shall denote the identity operator on $\R^n.$ We will use the concept of isotropic measures applied to $S_{\mu,K}$. To give an example of isotropicity, we show that, if $S_{\mu,K}$ is isotropic, we obtain a much a sharper estimate for the size of $\Pi_\mu K.$
\begin{proposition}
Suppose $S_{\mu,K}$ is isotropic. Then,
$$\frac{\mu(\partial K)}{2n}B_2^n \subset \Pi_\mu K \subset \frac{\mu(\partial K)}{2\sqrt{n}}B_2^n.$$
\end{proposition}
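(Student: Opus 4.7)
The plan is to work directly with the integral representation of the support function $h_{\Pi_\mu K}(\theta) = \tfrac{1}{2}\int_{\s^{n-1}} |\langle \theta, u\rangle|\, dS_{\mu,K}(u)$ and exploit the isotropy of $S_{\mu,K}$ to obtain sharp upper and lower pointwise bounds on $\theta\in\s^{n-1}$. Recall from the definition of the $\mu$-surface measure and the discussion preceding the proposition that $S_{\mu,K}(\s^{n-1}) = \mu(\partial K)$, so isotropy reads
\[
\int_{\s^{n-1}} \langle \theta, u\rangle^2\, dS_{\mu,K}(u) = \frac{\mu(\partial K)}{n}\quad\text{for every } \theta\in\s^{n-1}.
\]

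For the upper bound, I would apply the Cauchy--Schwarz inequality to the two functions $1$ and $|\langle \theta, u\rangle|$ against the (positive) measure $S_{\mu,K}$:
\[
2\, h_{\Pi_\mu K}(\theta) \;\leq\; \Bigl(\int_{\s^{n-1}} dS_{\mu,K}(u)\Bigr)^{1/2}\Bigl(\int_{\s^{n-1}} \langle \theta,u\rangle^2\, dS_{\mu,K}(u)\Bigr)^{1/2} = \sqrt{\mu(\partial K)}\cdot\sqrt{\frac{\mu(\partial K)}{n}},
\]
giving $h_{\Pi_\mu K}(\theta) \leq \mu(\partial K)/(2\sqrt{n})$ uniformly in $\theta$, which is exactly the outer inclusion.

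For the lower bound, I would use the elementary inequality $|\langle \theta, u\rangle| \geq \langle \theta, u\rangle^2$ that holds for all unit vectors $\theta, u$ (since $|\langle \theta,u\rangle|\leq 1$ by Cauchy--Schwarz on $\R^n$). Integrating against $S_{\mu,K}$ and invoking isotropy once more yields
\[
h_{\Pi_\mu K}(\theta) \;\geq\; \frac{1}{2}\int_{\s^{n-1}}\langle \theta,u\rangle^2\, dS_{\mu,K}(u) = \frac{\mu(\partial K)}{2n},
\]
uniformly in $\theta\in\s^{n-1}$, giving the inner inclusion. No step is really the hard one here; the only substantive ingredient is the observation that isotropy controls the second moment of $\langle \theta, u\rangle$, and both $|\langle\theta,u\rangle|$ and $\langle\theta,u\rangle^2$ can be bracketed against this second moment by Cauchy--Schwarz (from above) and by the unit-ball inequality $x^2\leq |x|$ for $|x|\leq 1$ (from below).
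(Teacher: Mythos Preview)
Your proof is correct and follows essentially the same approach as the paper: both use Cauchy--Schwarz against the measure $S_{\mu,K}$ for the upper bound and the pointwise inequality $\langle\theta,u\rangle^2\leq|\langle\theta,u\rangle|$ for unit vectors for the lower bound, invoking isotropy to evaluate the second moment as $\mu(\partial K)/n$.
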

\begin{proof}
It suffices to show that, for every $\theta\in\s^{n-1},$
$$\frac{\mu(\partial K)}{2n} \leq h_{\Pi_\mu K}(\theta) \leq \frac{\mu(\partial K)}{2\sqrt{n}}.$$
But this is immediate from the fact that, for $\theta,u\in\s^{n-1},\, |\langle u,\theta\rangle|^2\leq |\langle u,\theta\rangle|$ and also the Cauchy-Schwarz inequality:
$$\frac{1}{2}\int_{\s^{n-1}}|\langle u,\theta\rangle|^2dS_{\mu,K}(u)\leq h_{\Pi_{\mu}K}(\theta)=\frac{1}{2}\int_{\s^{n-1}}|\langle u,\theta\rangle|dS_{\mu,K}(u)\leq \frac{\sqrt{\mu(\partial K)}}{2}\left(\int_{\s^{n-1}}|\langle u,\theta\rangle|^2dS_{\mu,K}(u)\right)^{\frac{1}{2}};$$
using that
$$\frac{\mu(\partial K)}{n}=\int_{\s^{n-1}}|\langle u,\theta\rangle|^2dS_{\mu,K}(u)$$
by hypothesis completes the proof.
\end{proof}

Next, fix $\mu\in\Lambda$ and $K\in\conbod,$ and, for $A \in SL_n(\R),$ consider
\begin{equation}
\label{e:best_surface}
    I_{\mu,K}(A):=\int_{\s^{n-1}} |Au|dS_{\mu,K}(u).
\end{equation}
We now show that $S_{\mu,K}$ being isotropic occurs precisely when \eqref{e:best_surface} is minimized at $\mathrm{Id}$. Observe that in the volume case, one has $I_{\lambda,K}(A)=\vol_{n-1}(\partial (A^{-t} K))$ \cite[Chapter 8,pg. 308]{gardner_book}. Thus, finding the minimum of $I_{\lambda,K}$ is equivalent to what is called the minimal surface area position of $K$. It is well known \cite{AGA} that an isotropic measure permits the following decomposition of identity.

\begin{lemma}
\label{l:optim}
Let $K\in\conbod$ and consider $\mu\in\Lambda$ such that $\mu(\partial K)\in (0,\infty).$ Then, $S_{\mu,K}$ is an isotropic measure if, and only if, \eqref{e:best_surface} is minimized at $\mathrm{Id}.$ In particular, one obtains the following decomposition of identity: 
\[
\mathrm{Id} = \frac{n}{\mu(\partial K)} \int_{\s^{n-1}}  u \otimes u dS_{\mu,K}(u).
\]
\begin{proof}
Our claim has been previously shown in the case where $\mu=\lambda$ in \cite{GP99}; the general case follows. Indeed, since the function $f_A(u)=|Au|$ is even for every $A\in GL(n)$, one obtains that
$$I_{\mu,K}(A)=\int_{\s^{n-1}}f_A(u)dS_{\mu,K}(u)=\int_{\s^{n-1}}f_{A}(u)dE(u),$$
where $dE(u)=\frac{1}{2}(dS_{\mu,K}(u)+dS_{\mu,K}(-u)).$ Then, one obtains from Minkowski's Existence Theorem \cite[Theorem 8.2.2]{Sh1} that there exists some $L\in\conbod$ such that $dE(u)=dS_{L}(u).$ Clearly, $S_{\mu,K}$ is isotropic if, and only if, $S_{L}$ is isotropic, and $I_{\mu,K}(A)=I_{\lambda,L}(A).$ Therefore, from the proof of Theorem 1 in \cite{petty2}, a minimum exists. Furthermore, $S_L(\s^{n-1})=E(\s^{n-1})=S_{\mu,K}(\s^{n-1})=\mu(\partial K),$ and the claim follows.
\end{proof}

\end{lemma}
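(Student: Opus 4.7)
The plan is to reduce the claim to the already-known volume (Lebesgue) case via symmetrization and Minkowski's Existence Theorem, and then to obtain the identity decomposition as a trivial consequence of isotropy. The key observation is that the integrand in \eqref{e:best_surface}, namely $f_A(u)=|Au|$, is an even function of $u$, so $I_{\mu,K}(A)$ depends only on the even part of the measure $S_{\mu,K}$. Accordingly I would define
\[
dE(u)=\tfrac{1}{2}\bigl(dS_{\mu,K}(u)+dS_{\mu,K}(-u)\bigr),
\]
which is an even, finite, non-negative Borel measure on $\s^{n-1}$ with total mass $\mu(\partial K)\in(0,\infty)$, and satisfies $I_{\mu,K}(A)=\int_{\s^{n-1}}|Au|\,dE(u)$ for every $A\in SL_n(\R)$. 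Since $E$ is even, its vector-valued first moment vanishes automatically, so the hypotheses of Minkowski's Existence Theorem \cite[Theorem 8.2.2]{Sh1} are met, and there exists a symmetric convex body $L\in\conbod$ with $dS_L=dE$. Thus $I_{\mu,K}(A)=I_{\lambda,L}(A)$ for every $A$, and these two functionals have the same minimizers on $SL_n(\R)$.

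Next I would transfer the isotropy condition through the same symmetrization. Because $u\otimes u=(-u)\otimes(-u)$, one has $\int u\otimes u\,dS_{\mu,K}(u)=\int u\otimes u\,dE(u)=\int u\otimes u\,dS_L(u)$, so $S_{\mu,K}$ is isotropic if and only if $S_L$ is isotropic. The known result of Petty \cite{petty2} (stated also in \cite{GP99}) asserts that for the Lebesgue case the functional $A\mapsto I_{\lambda,L}(A)=\vol_{n-1}(\partial(A^{-t}L))$ is minimized on $SL_n(\R)$ at $\mathrm{Id}$ exactly when $S_L$ is isotropic, and a minimum is attained. Combining these, we get the desired equivalence for $S_{\mu,K}$.

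Finally, for the decomposition of identity: assuming $S_{\mu,K}$ is isotropic, the symmetric matrix $M:=\int_{\s^{n-1}}u\otimes u\,dS_{\mu,K}(u)$ satisfies
\[
\theta^{T}M\theta=\int_{\s^{n-1}}\langle\theta,u\rangle^{2}dS_{\mu,K}(u)=\frac{\mu(\partial K)}{n}
\]
for every $\theta\in\s^{n-1}$, forcing $M=\tfrac{\mu(\partial K)}{n}\mathrm{Id}$ (since all eigenvalues of $M$ coincide), and the stated formula follows by rearrangement. The only place that really requires care is verifying that the symmetrization step is legitimate, i.e.\ that $E$ meets the hypotheses of Minkowski's theorem (non-negativity, finiteness, evenness, and not being concentrated on a great subsphere). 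The last point uses $\mu(\partial K)>0$ and the fact that $S_{\mu,K}$ itself is not supported on any great subsphere, since $K$ is a full-dimensional convex body; evenness of $E$ is automatic, and non-negativity is inherited from $S_{\mu,K}$. Apart from this bookkeeping, everything else is direct.
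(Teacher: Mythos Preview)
Your proposal is correct and follows essentially the same route as the paper: symmetrize $S_{\mu,K}$ to the even measure $E$, invoke Minkowski's Existence Theorem to realize $E=S_L$ for some convex body $L$, and then appeal to the known Lebesgue case (Petty/Giannopoulos--Papadimitrakis) via $I_{\mu,K}(A)=I_{\lambda,L}(A)$ and the evenness of $u\otimes u$. You add a little more detail than the paper does (the explicit eigenvalue argument for the identity decomposition and the check of Minkowski's hypotheses), but the underlying argument is the same.
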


Our goal is to obtain a reverse isoperimetric inequality for $\mu(\partial K)$ when $S_{\mu,K}$ is isotropic. We first recall the following result from K. Ball \cite{Ball2}.
\begin{proposition}
\label{p:ball}
 Let $\left\{u_{j}\right\}_{j \leq m}$ be unit vectors in $\mathbb{R}^{n}$ and $\left\{c_{j}\right\}_{j \leq m}$ be positive numbers satisfying
$$
\mathrm{Id}=\sum_{j=1}^{m} c_{j} u_{j} \otimes u_{j}.
$$
Define a norm in $\mathbb{R}^{n}$ by $\|x\|=\sum_{j=1}^{m} \alpha_{j}\left|\left\langle x, u_{j}\right\rangle\right|$, where $\alpha_{j}>0 .$ Let $L$ be the unit ball of $\left(\mathbb{R}^{n},\|\cdot\|\right).$ Then,
$$
\vol_n(L) \leq \frac{2^{n}}{n !} \prod_{j=1}^{m}\left(\frac{c_{j}}{\alpha_{j}}\right)^{c_{j}}.$$
\end{proposition}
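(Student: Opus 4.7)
The plan is to reduce the volume estimate to an exponential integral, then apply Ball's geometric form of the Brascamp--Lieb inequality, which is perfectly adapted to decompositions of identity of the shape $\mathrm{Id} = \sum_j c_j u_j \otimes u_j$ with unit vectors $u_j$. First I would rewrite the volume as
\[
\vol_n(L) = \frac{1}{n!}\int_{\R^n} e^{-\|x\|}\,dx,
\]
which follows from layer-cake integration together with the homogeneity $\vol_n(rL)=r^n\vol_n(L)$. Since by definition $\|x\| = \sum_j \alpha_j |\langle x,u_j\rangle|$, the integrand factors as
\[
e^{-\|x\|} = \prod_{j=1}^m e^{-\alpha_j|\langle x, u_j\rangle|} = \prod_{j=1}^m f_j(\langle x, u_j\rangle)^{c_j}, \quad \text{where } f_j(t) := e^{-(\alpha_j/c_j)|t|}.
\]

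Next I would invoke Ball's geometric Brascamp--Lieb inequality: whenever $\{u_j\}$ and $\{c_j\}$ satisfy the hypothesis $\mathrm{Id}=\sum_j c_j u_j\otimes u_j$, one has, for any non-negative measurable $f_j\colon \R\to\R^+$,
\[
\int_{\R^n} \prod_{j=1}^m f_j(\langle x, u_j\rangle)^{c_j}\,dx \;\le\; \prod_{j=1}^m \left(\int_\R f_j(t)\,dt\right)^{c_j}.
\]
Each one-dimensional integral is elementary: $\int_\R f_j = \int_\R e^{-(\alpha_j/c_j)|t|}\,dt = 2c_j/\alpha_j$. Combining,
\[
\vol_n(L) \leq \frac{1}{n!}\prod_{j=1}^m \left(\frac{2c_j}{\alpha_j}\right)^{c_j} = \frac{2^{\sum_j c_j}}{n!}\prod_{j=1}^m \left(\frac{c_j}{\alpha_j}\right)^{c_j}.
\]

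Finally, I would close the loop by taking the trace in the decomposition of identity: $n = \mathrm{tr}\,\mathrm{Id} = \sum_j c_j\,\mathrm{tr}(u_j\otimes u_j) = \sum_j c_j|u_j|^2 = \sum_j c_j$, so that the factor $2^{\sum_j c_j}$ is exactly $2^n$, yielding the claimed inequality. The main obstacle in this route is conceptual rather than computational: one must have the geometric Brascamp--Lieb inequality available, whose proof (Ball's original heat-flow / rank-one perturbation argument, or Barthe's optimal transport proof) is genuinely the heart of the matter; once it is granted, the rest is bookkeeping of the one-dimensional integrals and a trace identity. An alternative would be to bypass the exponential trick and instead apply Brascamp--Lieb directly to $\prod_j \chi_{[-1,1]}(\alpha_j \langle x, u_j\rangle/c_j)^{c_j}$, but this gives a slightly weaker constant and would still rely on the same deep ingredient.
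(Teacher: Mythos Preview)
Your proof is correct and follows the standard route due to Ball himself. However, you should know that the paper does not give a proof of this proposition at all: it is stated as a recalled result from Ball's work (cited as \cite{Ball2}) and used as a black box in the application to the reverse isoperimetric inequality. So there is nothing in the paper to compare against, and your writeup in fact supplies the argument that the paper omits. The one substantive external ingredient you invoke, the geometric Brascamp--Lieb inequality for rank-one projections under a decomposition of identity, is exactly the tool Ball introduced for this purpose, so your approach is the canonical one rather than an alternative.
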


\noindent Next, let $K$ be a polytope with facets $F_j, \; j=1,\dots,m$. For $\mu\in\Lambda$ with density $\phi,$ define
$$\mu(F_j)=\int_{F_j}\phi(x)dx.$$ 
Suppose that \eqref{e:best_surface} is minimized at $\mathrm{Id}.$ Consequently, Lemma~\ref{l:optim} gives that
$$\mathrm{Id}=\sum_{j=1}^m\frac{n\mu(F_j)}{\mu(\partial K)}u_j\otimes u_j,$$
where $u_j$ is the outer-unit normal of $F_j$. Additionally, the support function of $\Pi_\mu K$ can be written as
$$h_{\Pi_\mu K}(\theta)=\sum_{j=1}^m\frac{\mu(F_j)}{2}|\langle \theta,u_j \rangle|.$$
From Proposition~\ref{p:ball}, one then obtains that
$$\vol_n(\Pi^\circ_\mu K)\leq \frac{2^n}{n!}\left(\frac{2n}{\mu(\partial K)}\right)^n.$$
By arguing via approximation, and combining with Lemma~\ref{cor:sur}, we get the following.

\begin{theorem}
\label{t:almost_iso}
Let $K$ be a convex body and $\mu\in\Lambda.$ Suppose that \eqref{e:best_surface} is minimized at $\mathrm{Id},$ that is $S_{\mu,K}$ is isotropic. Then, 
$$\left(\frac{n\kappa_n}{\kappa_{n-1}}\right)^n\kappa_n\leq\mu^n\left(\partial K\right)\vol_n(\Pi^\circ_\mu K)\leq \frac{4^nn^n}{n!}.$$
\end{theorem}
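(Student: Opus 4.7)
The plan is to split the proof into the two inequalities: the lower bound is an immediate consequence of Lemma~\ref{cor:sur} (isotropicity plays no role there), so the work lies entirely in establishing the upper bound $\mu^n(\partial K)\vol_n(\Pi^\circ_\mu K) \leq 4^n n^n/n!$.

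First I would reduce to the polytopal case, which is already sketched in the paragraph preceding the theorem. If $K$ is a polytope with facets $F_1,\dots,F_m$ and outer unit normals $u_1,\dots,u_m$, then
\[
h_{\Pi_\mu K}(\theta) = \sum_{j=1}^m \frac{\mu(F_j)}{2}|\langle \theta,u_j\rangle|,
\]
so $\Pi^\circ_\mu K$ is precisely the unit ball of the norm $\|x\| = \sum_j \alpha_j |\langle x, u_j\rangle|$ with $\alpha_j = \mu(F_j)/2$. By isotropicity and Lemma~\ref{l:optim}, we have the decomposition $\mathrm{Id} = \sum_j c_j\, u_j\otimes u_j$ with $c_j = n\mu(F_j)/\mu(\partial K)$ (and $\sum c_j = n$ from the trace). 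Applying Ball's inequality (Proposition~\ref{p:ball}) to this data yields
\[
\vol_n(\Pi^\circ_\mu K) \leq \frac{2^n}{n!}\prod_{j=1}^m\left(\frac{c_j}{\alpha_j}\right)^{c_j} = \frac{2^n}{n!}\left(\frac{2n}{\mu(\partial K)}\right)^{\sum_j c_j} = \frac{4^n n^n}{n!\,\mu^n(\partial K)},
\]
where the crucial simplification is that $c_j/\alpha_j = 2n/\mu(\partial K)$ is independent of $j$, collapsing the product to a single power.

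Next comes the approximation step, which I expect to be the main obstacle. The body $\Pi_\mu K$ is the zonoid determined by the measure $S_{\mu,K}$ on $\s^{n-1}$, and its support function depends weakly-continuously on this measure; hence $\Pi^\circ_\mu K$ and its volume depend continuously on $S_{\mu,K}$ (it is a zonoid with $0$ in its interior whenever $S_{\mu,K}$ is not concentrated on a great subsphere). So it suffices to approximate $S_{\mu,K}$ weakly by a sequence of finitely supported isotropic measures $\nu_i$ on $\s^{n-1}$ with $\nu_i(\s^{n-1}) = \mu(\partial K)$, for then the polytopal computation above (which only uses the discrete isotropic data, not the polytope itself) applies to each $\nu_i$ and gives $\vol_n(Z_{\nu_i}^\circ) \leq 4^n n^n/(n!\,\mu^n(\partial K))$; passing to the limit yields the claim for $\Pi^\circ_\mu K$.

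The existence of such an approximation is standard: the set of isotropic measures on $\s^{n-1}$ of fixed total mass is a weakly compact convex set, and its extreme points, by a Carath\'eodory-type argument on the $(n^2{+}1)$-dimensional affine space of moment constraints (total mass plus the entries of $\int u\otimes u\,d\nu$), are finitely supported; hence every isotropic measure is a weak limit of finitely supported isotropic measures of the same mass. Alternatively, one partitions $\s^{n-1}$ into small geodesic cells, replaces the restriction of $S_{\mu,K}$ to each cell by a point mass at its barycenter, and corrects the small error in the isotropy condition by a vanishing perturbation (legitimate since the cone of isotropic measures is full-dimensional in the relevant moment space). Either route gives the needed approximation, and combining with the polytopal bound and Lemma~\ref{cor:sur} completes the proof.
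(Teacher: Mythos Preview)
Your proposal is correct and follows essentially the same route as the paper: the lower bound from Lemma~\ref{cor:sur}, the upper bound in the discrete case via the decomposition of identity from Lemma~\ref{l:optim} together with Ball's Proposition~\ref{p:ball}, and then a passage to the limit. The paper compresses the last step into the single phrase ``by arguing via approximation''; your choice to approximate the isotropic measure $S_{\mu,K}$ directly by finitely supported isotropic measures (rather than $K$ by polytopes, which would not preserve isotropicity) is the right way to make that phrase precise.
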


Using Theorem~\ref{t:almost_iso}, we obtain the following reverse isoperimetric inequalities for $\mu(\partial K).$ Theorem~\ref{t:qiso} below corresponds to measures with concavity similar to the logarithm, and follows from Corollary~\ref{Q_theorem}. Theorem~\ref{t:fiso} below corresponds to measures with concavity similar to $s$-concave measures, and follows from Corollary~\ref{F_theorem}.

\begin{theorem}
\label{t:qiso}
Let $\mu\in\Lambda$ be $Q$-concave, such that $Q:(0,\infty)\to\R$ is an invertible, increasing, differentiable function satisfying $\lim_{r\to0^+}Q(r)\in[-\infty,\infty).$ Suppose $K\in\conbod$ such that $K$ is $\mu$-projective and $Q^\prime (\mu(K))\neq 0$. Suppose further that $S_{\mu,K}$ is isotropic. Then,
$$\mu^n(\partial K)\leq \left(\frac{4n}{Q^\prime(\mu(K))}\right)^n\frac{\int_0^{\infty}Q^{-1}\left(Q(\mu(K))-t\right)t^{n-1}dt}{(n-1)!\vol_n(K)\mu(K)}.$$

\noindent In particular, if $\mu$ is log-concave, then one has
$$\mu(\partial K)\leq 4n \mu(K)\vol^{-1/n}_n(K).$$

\end{theorem}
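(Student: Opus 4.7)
The plan is to chain together two ingredients already available: the Zhang-type inequality for $Q$-concave measures in Corollary~\ref{Q_theorem}, which bounds $\vol_n(K)$ from above by a quantity involving $\vol_n((\Pi_\mu K - \eta_{\mu,K})^\circ)$, and the ``reverse Petty'' bound in Theorem~\ref{t:almost_iso}, which, under the isotropy of $S_{\mu,K}$, bounds $\vol_n(\Pi_\mu^\circ K)$ from above by $4^n n^n/(n!\, \mu^n(\partial K))$. The $\mu$-projectivity of $K$ is exactly what is needed to identify $\Pi_\mu K - \eta_{\mu,K}$ with $\Pi_\mu K$, so that the two estimates talk about the same body.

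Concretely, first I would apply Corollary~\ref{Q_theorem}: since $K$ is $\mu$-projective we have $\eta_{\mu,K}=0$, hence
\begin{equation*}
    \vol_n(K)\,\mu(K)\,[Q'(\mu(K))]^n \leq n\,\vol_n(\Pi_\mu^\circ K)\int_0^\infty Q^{-1}\!\left(Q(\mu(K))-t\right)t^{n-1}\,dt.
\end{equation*}
Next, I would multiply both sides by $\mu^n(\partial K)$ and invoke Theorem~\ref{t:almost_iso}, whose hypothesis (isotropy of $S_{\mu,K}$) is assumed, to replace the factor $\mu^n(\partial K)\vol_n(\Pi_\mu^\circ K)$ by the universal constant $4^n n^n/n!$. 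Dividing through by $\vol_n(K)\mu(K)[Q'(\mu(K))]^n$, which is strictly positive by hypothesis, yields
\begin{equation*}
    \mu^n(\partial K) \leq \frac{(4n)^n}{(n-1)!\,\vol_n(K)\,\mu(K)\,[Q'(\mu(K))]^n}\int_0^\infty Q^{-1}\!\left(Q(\mu(K))-t\right)t^{n-1}\,dt,
\end{equation*}
which is exactly the claimed bound.

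For the log-concave specialization, I would set $Q(x)=\ln x$, noting that by Lemma~\ref{l:log} every log-concave $\mu\in\Lambda$ has a locally Lipschitz density, so Corollary~\ref{Q_theorem} indeed applies. Then $Q'(\mu(K))=1/\mu(K)$ and the integral is
\begin{equation*}
    \int_0^\infty e^{\ln(\mu(K))-t}\,t^{n-1}\,dt = \mu(K)\,\Gamma(n) = \mu(K)\,(n-1)!,
\end{equation*}
and substitution collapses the general inequality to $\mu^n(\partial K)\leq (4n)^n \mu^n(K)/\vol_n(K)$, from which the stated root-$n$ version follows. There is no serious obstacle here beyond keeping track of hypotheses; the only mild subtleties are making sure the positivity of $Q'(\mu(K))$ (which follows from $Q$ increasing and $\mu(K)>0$) justifies dividing, and observing that the $\mu$-projectivity assumption is precisely what lets us identify the polar projection body appearing in Corollary~\ref{Q_theorem} with the one in Theorem~\ref{t:almost_iso}.
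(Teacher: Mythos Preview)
Your proposal is correct and follows exactly the approach the paper indicates: combine Corollary~\ref{Q_theorem} (using $\mu$-projectivity to drop $\eta_{\mu,K}$) with the upper bound of Theorem~\ref{t:almost_iso}, and then specialize to $Q=\ln$ for the log-concave case. The paper gives no further details beyond this, so your write-up is in fact more explicit than the original.
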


\begin{theorem}
\label{t:fiso}
Let $\mu\in\Lambda$ be $F$-concave, such that $F:(0,\mu(\R^n))\to\R^+$ is an invertible, increasing, differentiable function. Suppose $K\in\conbod$ such that $K$ is $\mu$-projective. Suppose further that $S_{\mu,K}$ is isotropic. Then,
$$\mu^n(\partial K)\leq \left(\frac{4nF(\mu(K))}{F^\prime(\mu(K))}\right)^n\frac{\int_0^{1}F^{-1}\left(F(\mu(K))t\right)(1-t)^{n-1}dt}{(n-1)!\vol_n(K)\mu(K)}.$$

\noindent In particular, if $\mu$ is $s$-concave, for $s>0$, one has
$$\mu(\partial K)\leq 4n\left(\frac{\Gamma(\frac{1}{s}+1)}{s^n\Gamma(\frac{1}{s}+n+1)}\right)^{1/n} \mu(K)\vol^{-1/n}_n(K).$$

\end{theorem}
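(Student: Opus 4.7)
The strategy is to combine the upper bound in Theorem~\ref{t:almost_iso} with the Zhang-type lower bound on $\vol_n(\Pi_\mu^\circ K)$ coming from Corollary~\ref{F_theorem}. The hypothesis that $K$ is $\mu$-projective means $\eta_{\mu,K}=0$, so the convex body $(\Pi_\mu K-\eta_{\mu,K})^\circ$ appearing in Corollary~\ref{F_theorem} reduces to $\Pi_\mu^\circ K$; the hypothesis that $\mu$ is $F$-concave gives, via Lemma~\ref{l:covario_concave}, that $F\circ g_{\mu,K}$ is concave, and (for the main quantitative step) $F$-concavity of an $s$-concave measure in the corollary needs its density to be locally Lipschitz, which is Lemma~\ref{l:scon}.

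First I would specialize Corollary~\ref{F_theorem} to the case $\nu=\lambda$, $f=\chi_K$ and use $\mu$-projectivity to rewrite the conclusion as
\[
\vol_n(\Pi_\mu^\circ K)\;\geq\;\frac{\mu(K)\vol_n(K)}{n\left(\tfrac{F(\mu(K))}{F'(\mu(K))}\right)^{n}\int_0^1 F^{-1}\bigl(F(\mu(K))t\bigr)(1-t)^{n-1}dt}.
\]
Next I would invoke the right-hand bound of Theorem~\ref{t:almost_iso}, which under isotropicity of $S_{\mu,K}$ gives $\mu^n(\partial K)\vol_n(\Pi_\mu^\circ K)\leq 4^{n}n^{n}/n!$. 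Dividing the second inequality by the first and using $n^{n+1}/n!=n^n/(n-1)!$ yields exactly the claimed estimate
\[
\mu^n(\partial K)\;\leq\;\left(\frac{4nF(\mu(K))}{F'(\mu(K))}\right)^{n}\frac{\int_0^1 F^{-1}\bigl(F(\mu(K))t\bigr)(1-t)^{n-1}dt}{(n-1)!\,\vol_n(K)\,\mu(K)}.
\]

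For the $s$-concave corollary, I would substitute $F(x)=x^{s}$, so that $F(x)/F'(x)=x/s$ and $F^{-1}(F(\mu(K))t)=\mu(K)t^{1/s}$. The inner integral becomes $\mu(K)\,B(1/s+1,n)=\mu(K)\,(n-1)!\,\Gamma(1/s+1)/\Gamma(1/s+n+1)$; the $(n-1)!$ cancels and a factor $\mu(K)$ cancels too. Taking the $n$-th root yields
\[
\mu(\partial K)\leq \frac{4n\,\mu(K)}{s}\Bigl(\frac{\Gamma(1/s+1)}{\Gamma(1/s+n+1)\,\vol_n(K)}\Bigr)^{1/n}
= 4n\left(\frac{\Gamma(1/s+1)}{s^{n}\Gamma(1/s+n+1)}\right)^{1/n}\mu(K)\,\vol_n(K)^{-1/n},
\]
as stated.

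There is no real analytic obstacle; the argument is essentially bookkeeping, with the two ``hard'' ingredients (the reverse Petty-type estimate in Theorem~\ref{t:almost_iso}, coming from Ball's lemma applied to the facet representation of $\Pi_\mu K$, and the generalized Zhang inequality in Corollary~\ref{F_theorem}) already in hand. The one point that deserves care is verifying the hypotheses of Corollary~\ref{F_theorem} in the $s$-concave case: the locally Lipschitz density hypothesis is exactly Lemma~\ref{l:scon}, so $s$-concavity of $\mu$ is enough to feed the machine. The rest is computing the Beta integral and simplifying the prefactor.
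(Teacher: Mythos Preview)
Your proposal is correct and follows exactly the route the paper indicates: the paper itself says Theorem~\ref{t:fiso} ``follows from Corollary~\ref{F_theorem}'' combined with Theorem~\ref{t:almost_iso}, and your write-up carries out precisely this combination (specializing $\nu=\lambda$, using $\mu$-projectivity to drop $\eta_{\mu,K}$, and then the Beta-integral computation for the $s$-concave case via Lemma~\ref{l:scon}). The bookkeeping with $n^{n+1}/n!=n^n/(n-1)!$ and the evaluation $\int_0^1 t^{1/s}(1-t)^{n-1}dt=\Gamma(1/s+1)\Gamma(n)/\Gamma(1/s+n+1)$ are both correct.
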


\printbibliography

\begin{tabular}{ll}
Dylan Langharst and Artem Zvavitch & Michael Roysdon\\
Department of Mathematical Sciences & Department of Pure Mathematics\\
Kent State University & Tel Aviv University\\
Kent, OH 44242 & P.O. Box 39040, Tel Aviv 6997801\\
USA & Israel\\
  dlanghar@kent.edu & mroysdon@kent.edu\\
   zvavitch@math.kent.edu
\end{tabular}

\end{document}